\theoremstyle{plain}
\newtheorem{thm}{Theorem}[section]
\newtheorem*{thm*}{Theorem}
\newtheorem{prop}[thm]{Proposition}
\newtheorem{lem}[thm]{Lemma}
\newtheorem{conj}[thm]{Conjecture}
\theoremstyle{definition}
\newtheorem{defn}[thm]{Definition}
\newtheorem{rem}[thm]{Remark}
\newcommand{\lie}[1]{\mathfrak{#1}}   \newcommand\bc{\mathbb C} \newcommand\bn{\mathbb N}
\newcommand{\lm}{\operatorname{lm}}
\newcommand{\lc}{\operatorname{lc}
}
\newcommand{\lexp}{\operatorname{lexp}}
\newcommand{\Mon}{\operatorname{Mon}}
\renewcommand\l\lambda
\newcommand\m\mu
\renewcommand\d\partial
\newcounter{cnt}
\def\mydggeometry{\makeatletter\dg@YGRID=1\dg@XGRID=20\unitlength=0.003pt\makeatother}
\makeatother \theoremstyle{remark}
\numberwithin{equation}{section}
\def\section{\def\@secnumfont{\mdseries}\@startsection{section}{1}%
  \z@{.7\linespacing\@plus\linespacing}{.5\linespacing}%
  {\normalfont\scshape\centering}}
\def\subsection{\def\@secnumfont{\bfseries}\@startsection{subsection}{2}%
  {\parindent}{.5\linespacing\@plus.7\linespacing}{-.5em}%
  {\normalfont\bfseries}}
\begin{document}
\title[Gr\"obner bases for fusion products]{Gr\"obner bases for fusion products}

\author{Johannes Flake}
\address{RWTH Aachen University, Pontdriesch 10-16, 52062 Aachen}
\email{flake@art.rwth-aachen.de}
\author{Ghislain Fourier}
\email{fourier@art.rwth-aachen.de}
\author{Viktor Levandovskyy}
\email{levandov@math.rwth-aachen.de}

\subjclass[2010]{}
\begin{abstract}
We provide a new approach towards the analysis of the fusion products defined by B.~Feigin and S.~Loktev in the representation theory of (truncated) current Lie algebras. We understand the fusion product as a degeneration using Gr\"obner theory of non-commutative algebras and outline a strategy on how to prove a conjecture about the defining relations for the fusion product of two evaluation modules. We conclude with following this strategy for $\mathfrak{sl}_2(\mathbb{C}[t]) $ and hence provide yet another proof for the conjecture in this case.
\end{abstract}
\maketitle
\thispagestyle{empty}
\section{Introduction}
In the framework of finite-dimensional modules for current algebras, B.~Feigin and S.~Loktev introduced the fusion product of evaluation modules. 
This is on one hand the ordinary tensor product of two simple finite-dimensional modules of a semisimple finite-dimensional complex Lie algebra $\lie g$, for our purposes $\lie{sl}_n(\mathbb{C})$, and at the same time a graded module for $\lie g \otimes \bc[t]$, the current algebra. 
If $V(\lambda)$ and $V(\mu)$ are two simple highest-weight modules, then one obtains by construction graded Littlewood-Richardson coefficients $c_{\lambda, \mu}^{\tau}(q)$, a main motivation for the introduction of fusion products. 
In the following years, fusion products in general played their role in the construction of local Weyl modules and Demazure modules for $\lie g \otimes \mathbb{C}[t]$. 
Despite their relevance in the representation theory of current algebras in the past twenty years, important properties are still not proved. 
For example, fusion products are cyclic modules for $\lie g \otimes \bc[t]$ but their defining ideals are not known. 
A conjecture by E.~Feigin (Conjecture~\ref{conj-fusion}), claiming that the obvious relations are actually the defining relations, remains open in general. \\
Local Weyl modules have been defined and studied for $\lie g \otimes A$ for any unital, finitely generated, commutative algebra $A$ (and even beyond this case), but character formulas are known only for $\bc, \bc[t]$ and $\bc[t^{\pm}]$. 
Even for $A = \bc[t]/(t^2)$, the character and dimension of the local Weyl modules is conjectured only. The conjecture about the defining relations of the fusion product would provide this information (Theorem~\ref{thm-main}).\\  
For a fixed $m$, the Clebsch-Gordan formula for representations of $\lie{sl}_2(\bc)$ gives rise to a poset $\{(k,m-k) \mid 0 \leq k \leq m\}$ with $$(k, m-k) \geq (\ell, m- \ell) :\Leftrightarrow |m- 2k| \leq | m- 2 \ell|,$$ saying that there is an injective map of $\lie{sl}_2(\mathbb{C})$-modules $V(\ell) \otimes V(m - \ell) \longrightarrow V(k) \otimes V(m - k)$. 
Generalizing to $\lie{sl}_n(\bc)$, one has an induced partial order, using all positive roots, on the set $\mathcal{P}_\lambda = \{ (\lambda_1, \lambda_2) \mid \lambda = \lambda_1 + \lambda_2, \lambda_i \text{ dominant, integral weights}  \}$. 
A conjecture, formulated for the first time in \cite{DP07} (for variants see also \cite{FFLP05}, \cite{LPP07}), states that 
\[
(\lambda_1, \lambda - \lambda_1) \geq (\mu, \lambda - \mu) \Rightarrow V(\mu) \otimes V(\lambda - \mu) \hookrightarrow V(\lambda_1) \otimes V(\lambda - \lambda_1)
\]
or equivalently, denoting by $s_\lambda$ the Schur function or character of $V(\lambda)$, $s_{\lambda - \lambda_1}  s_{\lambda_1} - s_{\lambda- \mu} s_{\mu}$ is a non-negative sum of Schur functions. 
This conjecture is known as the Schur positivity conjecture and again, there are only partial results proved so far. 
Again, the proof of the defining relations for the fusion product would imply the conjecture on Schur positivity (Theorem~\ref{thm-main}).\\

The impact and relevance of the conjecture defining relations should be clear by now but unfortunately a proof is known in a few cases only. 
The $\lie{sl}_2(\bc)$-case follows from the Clebsch-Gordan formula, further cases are treated for example in \cite{Fou15}. Moreover, providing the proof for the case $\lambda \gg \mu$ has initiated the framework of PBW degenerations, see \cite{FFL11} and \cite{CIFFFR17}. \\
In this paper, we provide a new approach to attack the problem. 
The fusion product is defined as the associated graded module with respect to the natural filtration (induced by the degree function on $\bc[t]$) and hence it is natural to describe the problem in terms of Gr\"obner degenerations. 
Due to the context, we have to deal with Gr\"obner theory for non-commutative, infinite-dimensional algebras, hence the existence of an appropriate finite  basis is not clear.
The fusion product is defined using two evaluation parameters, but one can simplify the general conjecture to just one parameter, say $a \in \bc$, then the defining ideal $I_a(\lambda, \mu)$ for the tensor product of the evaluation modules is known. 
We fix a 
monomial well-ordering on 
$U(\lie g \otimes \bc[t]/I_a)$
compatible with the degree ordering on $\bc[t]$. 
Suppose that there exists a Gr\"obner basis for  $I_a(\lambda, \mu)$ whose leading terms do not contain the parameter $a$, then the ideal of leading terms (defined as in \Cref{rem-leading-ideal}) equals the ideal of 
leading terms of the ideal proposed by E.~Feigin (Conjecture~\ref{conj-flat}). 
We summarize our construction in our main theorem
\begin{thm*}
The existence of an appropriate finite Gr\"obner basis implies the conjecture on the defining relations for the fusion product.
\end{thm*}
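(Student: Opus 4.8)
The plan is to derive the conjecture as a formal consequence of the Gr\"obner-degeneration picture, the one non-formal input being the identity of leading-term ideals recorded just before the theorem. Write $U$ for the ambient enveloping algebra carrying the fixed monomial ordering, $\gr(U)$ for its associated graded (with the same PBW monomial basis), $I:=I_a(\l,\m)$ for the known defining ideal of the tensor product of the evaluation modules, $\LM(I)\subseteq\gr(U)$ for its leading-term ideal in the sense of \Cref{rem-leading-ideal}, and $F\subseteq\gr(U)$ for the ideal proposed by E.~Feigin. First I would record two facts that need no hypothesis. (1)~The filtration defining the fusion product is the one induced by the $\bc[t]$-degree, and the chosen ordering refines it; so passing to associated graded objects in the short exact sequence $0\to I\to U\to U/I\to 0$ (with induced filtrations) identifies $\gr(U/I)$ with $\gr(U)/\LM(I)$. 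As $U/I$, with this filtration, is the tensor product of the evaluation modules whose associated graded is the fusion product, this presents the fusion product as $\gr(U)/\LM(I)$; together with the general property $\operatorname{Hilb}(R/J)=\operatorname{Hilb}(R/\LM(J))$ of leading-term ideals we get
\[
\operatorname{Hilb}(U/I)=\operatorname{Hilb}\big(\gr(U)/\LM(I)\big),\qquad \operatorname{Hilb}\big(\gr(U)/F\big)=\operatorname{Hilb}\big(\gr(U)/\LM(F)\big).
\]
(2)~Every generator of $F$ is an honest relation in the fusion product, so there is a canonical surjection $\pi\colon\gr(U)/F\twoheadrightarrow\gr(U)/\LM(I)$ of graded cyclic modules, i.e.\ $F\subseteq\LM(I)$; \Cref{conj-fusion} is precisely the statement that $\pi$ is an isomorphism.

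The only place the hypothesis enters is the sentence preceding the theorem: a finite Gr\"obner basis of $I$ with $a$-free leading terms yields $\LM(I)=\LM(F)$, and simultaneously exhibits $\LM(I)$ --- hence $\LM(F)$ --- as finitely generated. Granting this identity, I would then chain the Hilbert series,
\[
\operatorname{Hilb}\big(\gr(U)/F\big)=\operatorname{Hilb}\big(\gr(U)/\LM(F)\big)=\operatorname{Hilb}\big(\gr(U)/\LM(I)\big)=\operatorname{Hilb}(U/I)=\operatorname{Hilb}(\text{fusion product}),
\]
so that $\pi$ becomes a degreewise surjection of graded vector spaces with equal, finite graded dimensions, hence an isomorphism. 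Equivalently $F=\LM(I)$: the fusion product is the quotient of $\gr(U)$ by E.~Feigin's obvious relations, which is \Cref{conj-fusion} and in turn yields \Cref{thm-main}.

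The genuine difficulty, I expect, sits entirely inside that preceding sentence rather than in the assembly above. Establishing ``$\LM(I)=\LM(F)$'' presupposes a usable Gr\"obner theory over $U(\lie g\otimes\bc[t])$ (or the pertinent truncated current enveloping algebra) --- leading-term ideals with the standard-monomial/flatness property, a workable notion of Gr\"obner basis, and good behaviour of leading terms under specialising the parameter $a$ --- in an algebra that is neither commutative, nor finite-dimensional, nor Noetherian; the crucial and unclear point is precisely the existence of an appropriate \emph{finite} Gr\"obner basis, which is what underwrites both the comparison of leading ideals and the flatness of the degeneration. The remaining, most concrete obstacle is then to actually exhibit a finite Gr\"obner basis of $I$ whose leading terms avoid $a$; this is what the paper carries out for $\lie{sl}_2(\bc[t])$ and leaves open in general.
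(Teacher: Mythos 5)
Your argument tracks the paper's proof of Theorem~\ref{thm-main2} closely: a surjection from the quotient by E.~Feigin's ideal onto the fusion product, combined with equality of (graded) dimensions forced by the coincidence of leading-term ideals from Conjecture~\ref{conj-flat}, upgraded to an isomorphism — this is exactly the paper's surjection-plus-dimension-count. One imprecision worth flagging: the fusion product is $\gr(U)/\gr(I)$ (the associated graded of $U/I$ for the degree filtration), \emph{not} $\gr(U)/\LM(I)$; in the sense of \Cref{rem-leading-ideal} the ideal of leading terms lives in a commutative polynomial ring and is a strictly finer degeneration, so the canonical surjection $\pi$ should target $\gr(U)/\gr(I)$ rather than $\gr(U)/\LM(I)$ — but since you only ever compare Hilbert series, and these agree across all three quotients, the argument closes just the same.
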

We are left with finding 
such a Gr\"obner basis for $I_a(\lambda, \mu)$, which  would be almost hopeless in general, but 
is much more accessible while  working 
in the context of $G$-algebras. 
 The first step in the general proof is
\begin{thm*} There exists an appropriate finite Gr\"obner basis for the fusion product of two evaluation modules for $\lie{sl}_2(\bc[t])$.
\end{thm*}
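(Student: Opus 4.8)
The plan is to replace the non‑Noetherian ambient algebra by a finite one, write down the ideal and a candidate Gröbner basis by hand, and certify it by a dimension count rather than by reducing $S$‑polynomials. Throughout, take $\lambda=m\ge n=\mu$, which is harmless since the fusion product is commutative. Writing $x_i$ for $x\otimes t^i$, the tensor product $V(\lambda)_a\otimes V(\mu)_0$ is pulled back along $\bc[t]\twoheadrightarrow\bc[t]/(t^2-at)$, so its defining ideal factors through the \emph{finite‑dimensional} current algebra $B_a:=U\big(\lie{sl}_2\otimes\bc[t]/(t^2-at)\big)$ — a $G$‑algebra over $\bc[a]$ on the six PBW generators $e_0,e_1,f_0,f_1,h_0,h_1$, in which $a$ enters \emph{only} through the degree‑lowering commutators $[e_1,f_1]=ah_1$, $[h_1,e_1]=2ae_1$, $[h_1,f_1]=-2af_1$ forced by $\bar t^{\,2}=a\bar t$. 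Fix the chosen monomial well‑ordering, which first compares the $\bc[t]$‑degree (the total degree in $e_1,f_1,h_1$) and then refines admissibly. Since $B_a$ is Noetherian, $I_a(\lambda,\mu)$ already has some finite Gröbner basis; the whole point is that one exists with leading terms free of $a$, and such a basis, being flat over $\bc[a]$, specialises at $a=0$ to a Gröbner basis of E.~Feigin's ideal for the fusion product.

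First I would write $I_a(\lambda,\mu)$ down. For $a\ne0$, $\bc[t]/(t^2-at)$ splits off orthogonal idempotents $\epsilon_0,\epsilon_1$, so $B_a\cong U\lie{sl}_2\otimes U\lie{sl}_2$, and $I_a(\lambda,\mu)=\operatorname{Ann}(v_m\otimes v_n)$ becomes the sum of the pulled‑back copies of $\operatorname{Ann}_{U\lie{sl}_2}(v_m)=\langle e,\,h-m,\,f^{m+1}\rangle$ and $\operatorname{Ann}_{U\lie{sl}_2}(v_n)=\langle e,\,h-n,\,f^{n+1}\rangle$. Clearing denominators in $a$, this is the finite set
\[
I_a(\lambda,\mu)=\big\langle\ e_0,\ e_1,\ h_0-(m+n),\ h_1-ma,\ f_1^{\,m+1},\ (af_0-f_1)^{\,n+1}\ \big\rangle .
\]
Each generator visibly annihilates $v_m\otimes v_n$ — using $e v_m=e v_n=0$ and the nilpotency orders $m+1,n+1$ of $f$ on $V(m),V(n)$ — and each has leading monomial free of $a$: namely $e_0,e_1,h_0,h_1,f_1^{\,m+1}$ and, since $f_0$ and $f_1$ commute, $\LM\big((af_0-f_1)^{n+1}\big)=f_1^{\,n+1}$ (leading coefficient $(-1)^{n+1}$).

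Next I would complete this to a Gröbner basis $G$. The missing elements are $g_0:=f_0^{\,m+n+1}\in I_a(\lambda,\mu)$ together with $g_1,\dots,g_n$, where $g_j$ has leading monomial $f_0^{\,m+n+1-2j}f_1^{\,j}$: such $g_j$ exist because $f_0^{\,m+n+1-2j}f_1^{\,j}(v_m\otimes v_n)$ lies in the $(j{-}1)$‑st step of the degree filtration on $V(m)\otimes V(n)$ — this is exactly the corresponding relation in the fusion product — hence equals $c_j\cdot(v_m\otimes v_n)$ for some $c_j$ of $\bc[t]$‑degree $\le j-1$, and $g_j:=f_0^{\,m+n+1-2j}f_1^{\,j}-c_j$ then lies in $I_a(\lambda,\mu)$ with the stated leading monomial. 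The crucial observation — the $\lie{sl}_2$‑shadow of the flatness demanded by the general strategy — is that each such correction, equivalently each relevant reduced $S$‑polynomial, is divisible \emph{as a whole} by a power of $a$ (for instance the first $S$‑polynomial is $f_1^{\,m+1}-f_1^{\,m-n}(f_1-af_0)^{\,n+1}$, whose $a^0$‑terms cancel); dividing that power out leaves an element still annihilating $v_m\otimes v_n$ and now with $a$‑free leading term. The leading‑monomial ideal of $G=\{e_0,e_1,h_0-(m+n),h_1-ma,(af_0-f_1)^{n+1},g_0,\dots,g_n\}$ is then $\langle e_0,e_1,h_0,h_1,f_1^{\,n+1},\ f_0^{\,m+n+1-2j}f_1^{\,j}\ (0\le j\le n)\rangle$, whose standard monomials are the $f_0^{a_0}f_1^{a_1}$ with $a_1\le n$ and $a_0+2a_1\le m+n$ — exactly $(m+1)(n+1)$ of them. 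To finish without reducing all $S$‑polynomials: $G\subseteq I_a(\lambda,\mu)$ forces $\dim_\bc B_a/\langle G\rangle\ge\dim_\bc B_a/I_a(\lambda,\mu)=\dim\big(V(m)\otimes V(n)\big)=(m+1)(n+1)$, while the standard monomials span $B_a/\langle G\rangle$, so $\dim_\bc B_a/\langle G\rangle\le(m+1)(n+1)$; equality forces $\langle G\rangle=I_a(\lambda,\mu)$ and $G$ to be a Gröbner basis with leading terms free of $a$. With the main theorem, this reproves \Cref{conj-fusion} for $\lie{sl}_2(\bc[t])$.

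The main obstacle is the flatness claim in the third step: one must identify the corrections $c_j$ (equivalently all the reduced $S$‑polynomials) concretely enough to see that every one of them is divisible by the predicted power of $a$, and that the divided‑out elements still kill $v_m\otimes v_n$; this is where the explicit Clebsch--Gordan combinatorics of $V(m)\otimes V(n)$ is needed, and it is precisely the step that does not obviously generalise beyond $\lie{sl}_2$. A secondary, more technical point is the choice of admissible tie‑breaking in the degree ordering so that the leading monomials genuinely form the claimed staircase, and — if one prefers to phrase the statement inside the full algebra $U(\lie{sl}_2\otimes\bc[t])$ rather than the truncation $B_a$ — enlarging $G$ so as to also generate the relations $x\otimes t^2=a\,x\otimes t$ as a left ideal, which is possible because $h_1-ma\in I_a(\lambda,\mu)$ lets one shift the $t$‑degree.
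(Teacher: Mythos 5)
Your proposal has the same skeleton as the paper's argument: reduce to the Noetherian $G$-algebra $B_a = U\big(\lie{sl}_2\otimes\bc[t]/(t^2-at)\big)$; pick a degree-refining order with $f_1\succ f_0$; and exhibit a Gröbner basis of $I_a(\lambda,\mu)$ whose leading monomials are $e_0,\,e_1,\,h_0,\,h_1,\,f_1^{\,\mu+1}$ together with the staircase $f_1^{\,j}f_0^{\,\lambda+\mu+1-2j}$ for $0\le j\le\mu$. Your $g_j$ and the paper's $F_j$ would have exactly the same leading monomials (the paper's $F_j$ is $\sum_k c_{kj}(-a)^{j-k}f_1^k f_0^{m_j-k}$ with leading term $f_1^{\,j}f_0^{\,\lambda+\mu+1-2j}$), and the dimension-count certification you use (compare the $(m+1)(n+1)$ standard monomials against the known dimension of the ordinary tensor product) is a perfectly sound alternative to the paper's direct check of $S$-polynomial reductions via the Generalized Product Criterion. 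The description of $I_a$ for $a\ne0$ through the idempotent splitting $B_a\cong U\lie{sl}_2\otimes U\lie{sl}_2$ is also a clean observation that the paper does not use.

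The genuine gap is exactly where you flag it: you assert the elements $g_j = f_0^{\,m+n+1-2j}f_1^{\,j}-c_j$ exist in $I_a$ with a correction $c_j$ of lower $\bc[t]$-degree, and your justification — that $f_0^{m+n+1-2j}f_1^{j}(v_m\otimes v_n)$ lies in the $(j-1)$-st filtration step of $V(m)\otimes V(n)$ because ``this is exactly the corresponding relation in the fusion product'' — is an appeal to the Feigin--Feigin description of the $\lie{sl}_2$ fusion product, i.e.\ to the very theorem the paper wants to reprove. You verify the $a$-divisibility by hand only for the first $S$-polynomial; the induction for the remaining ones is precisely what needs to be constructed. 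This is the content that the paper's Section~5 supplies: the explicit formula for $F_j$ with coefficients $c_{kj}=\binom{m_j}{k}\binom{\mu-k}{j-k}$, the commutator identity $[e_1,F_j]=(\partial_{f_0}F_j+a\partial_{f_1}F_j)(h_1-\mu a)+(j+1)(m_j-1)F_{j+1}$ (which \emph{inductively proves} $F_{j+1}\in I_a$ from $F_j\in I_a$ without any input from \cite{FF02}), and the no-proper-factor argument (\Cref{FdonotFactor}) that forces the commutative $S$-polynomials among the $F_j$ to reduce. The paper does use a Clebsch--Gordan fact, but only the elementary one about weight-space dimensions in the \emph{ordinary} tensor product (inside \Cref{FdonotFactor}), not the filtration structure of the fusion product. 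So your outline is correct as a strategy and does genuinely certify the basis once the $g_j$ are in hand, but as written it does not establish them, and the fallback you sketch would make the proof a repackaging of \cite{FF02} rather than an independent one.

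One small slip worth noting: the student's $h_1$-relation reads $h_1-ma$, which corresponds to evaluating $V(m)$ at $a$ and $V(n)$ at $0$; this is the reverse of the paper's convention $V(\lambda_1)_0\otimes V(\lambda_2)_a$, so the labels on $f_1^{m+1}$ versus $(af_0-f_1)^{n+1}$ swap relative to Lemma~\ref{lem-gen-fus}. Consistent, but easy to trip over when matching against the paper's $F_j$.
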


The combinatorics of this Gr\"obner basis in the $\lie{sl}_2(\bc[t])$-case is non-trivial and we expect that $\lie{sl}_n(\bc[t])$ will be difficult as well. 

The paper is organized as follows: In Section~\ref{sec-setup} we recall the fusion product and the setup from representation theory, and translate the problem to degeneration theory. Section~\ref{sec-grobner} provides the input from Gr\"obner theory of $G$-algebras and the proof that our approach is valid. 
Section~\ref{sec-conj} explains the impact on the various conjectures on Schur positivity and local Weyl modules, while in Section~\ref{sec-sl2}, we provide a Gr\"obner basis for the case of $\lie{sl}_2(\bc)$.

\bigskip

\textbf{Acknowledgments} J.~Flake and V.~Levandovskyy are supported through the DFG-CRC 195 ``Symbolic Tools in Mathematics and their Applications``.

\section{Representation Theory - Setup}\label{sec-setup}
Let $\lie g$ be a finite-dimensional simple complex Lie algebra and $\lie g \otimes \bc[t]$ its current (Lie) algebra with Lie bracket
\[
[ x \otimes p, y \otimes q ] = [x,y]_{\lie g} \otimes pq.
\]
We consider $\lie g$ as embedded into $\lie g \otimes \bc[t]$ by the map $x \mapsto x \otimes 1$.\\
We fix a triangular decomposition $\lie g = \lie n^+ \oplus \lie h \oplus \lie n^-$, denote the set of positive roots by $R^+$ and for each $\alpha \in R^+$, we fix a (non-zero) root vector $e_\alpha$ while fixing a root vector $f_\alpha$ for $-\alpha$. 
Further denote $h_\alpha = [ e_\alpha, f_\alpha]$.\\
The finite-dimensional simple $\lie g$-modules are indexed by their highest (dominant integral) weight $\lambda$, the set of dominant integral weights is denoted by $P^+$, and the fundamental weights are denoted by $\omega_i \in P^+$. For $\lambda \in P^+$, let $V(\lambda)$ be the corresponding simple module and let $v_\lambda$ be any non-zero vector in the highest weight space. 
By denoting the universal enveloping algebra as $U(\lie g)$, we have 
$$V(\lambda) = U(\lie g).v_\lambda = U(\lie n^-).v_\lambda.
$$

Let $V$ be a $\lie g$-module and $a \in \bc$, then
\[
(x \otimes p).v := p(a)x.v
\]
defines a $\lie g \otimes \bc[t]$-module structure on $V$ and we denote this evaluation module by $V_a$. Let $a_1, \ldots, a_s \in \bc$ be pairwise distinct and $\lambda_1, \ldots, \lambda_s \in P^+$, then
$$
V(\lambda_1, \ldots, \lambda_s, a_1, \ldots, a_s):= V(\lambda_1)_{a_1} \otimes \cdots \otimes V(\lambda_s)_{a_s}
$$ 
is a simple $\lie g \otimes \bc[t]$-module, and any finite-dimensional simple $\lie g \otimes \bc[t]$-module is of this form. It is important to notice here that
$$
V(\lambda_1, \ldots, \lambda_s, a_1, \ldots, a_s) \cong_{\lie g} V(\lambda_1) \otimes  \ldots \otimes V(\lambda_s),
$$
hence the $\lie g$-module structure is independent of $a_1,\dots,a_s$.
\\

B.~Feigin and S.~Loktev introduced the fusion product for modules of the current algebra \cite{FL99}, whose definition we recall here. These tensor products of evaluation modules are not graded (with respect to the natural grading on $\bc[t]$), but only filtered, so they constructed the associated graded module. One main motivation is a natural construction of graded Littlewood-Richardson coefficients. \\
The algebra $U(\lie g \otimes \bc[t])$ is naturally graded by the degree in $t$, i.e. 
$$
\deg \left((x_1 \otimes p_1) \cdots (x_s\otimes p_s)\right) = \sum_{i=1}^s \deg p_i ,
$$ 
so for $r\in\mathbb{N}_0$
\[
U(\lie g \otimes \bc[t])_{r} = \{ z \in U(\lie g \otimes \bc[t]) \mid \deg z \leq r\}
\]
defines a filtration on $U(\lie g \otimes \bc[t])$. Each filtered component is naturally a $\lie g$-module, and the associated graded algebra is again isomorphic to $U(\lie g \otimes \bc[t])$.

For each simple $U(\lie g \otimes \bc[t])$-module $V(\lambda_1, \ldots, \lambda_s, a_1, \ldots, a_s)$, we fix a highest weight vector $v$ of weight $\lambda_1 + \ldots + \lambda_s$. Then
\[
U(\lie g \otimes \bc[t])_{\leq r} \,. v  \subset V(\lambda_1, \ldots, \lambda_s, a_1, \ldots, a_s)
\]
defines a filtration on $V(\lambda_1, \ldots, \lambda_s, a_1, \ldots, a_s)$. The associated graded space is then a module for $U(\lie g \otimes \bc[t])$, with each graded component being a $\lie g$-module. This is called the fusion product and is denoted by
\[
V(\lambda_1)_{a_1} \ast \ldots \ast V(\lambda_s)_{a_s}.
\]
Again, we remark that 
$$
V(\lambda_1)_{a_1} \ast \ldots \ast V(\lambda_s)_{a_s} \cong_{\lie g}  V(\lambda_1) \otimes  \ldots \otimes V(\lambda_s).
$$
\begin{rem}
These fusion products play an important role in the construction of level one local Weyl modules for $\lie g \otimes \bc[t]$ (\cite{CL06,FoL07}).
\end{rem}

Since their introduction in \cite{FL99}, the following conjectures have remained open for the past 20 years:
\begin{conj}
Let $a_1, \ldots, a_s \in \bc$ pairwise distinct and $\lambda_1, \ldots, \lambda_s \in P^+$.
\begin{enumerate}
\item $V(\lambda_1)_{a_1} \ast \ldots \ast V(\lambda_s)_{a_s}$ is independent of the parameters  $a_1, \ldots, a_s$.
\item The fusion product, defined for any finite collection of cyclic modules, is associative, e.g. 
\[
\left(V(\lambda_1)_{a_1} \ast V(\lambda_2)_{a_2}
\right)_{b_1} \ast V(\lambda_3)_{a_3} \cong V(\lambda_1)_{a_1} \ast \left(V(\lambda_2)_{a_2} \ast V(\lambda_3)_{a_3}
\right)_{b_1} .
\]
\end{enumerate}
\end{conj}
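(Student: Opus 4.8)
The plan is to carry out the strategy of \Cref{sec-grobner} explicitly for $\lie g=\lie{sl}_2(\bc)$. Normalising the two evaluation parameters to $0$ and some $a\ne 0$, the module $V(\l)_a\otimes V(\m)_0$ (say $\l\ge\m$) factors through the finite-dimensional current algebra $\lie{sl}_2(\bc)\otimes\bc[t]/\bigl(t(t-a)\bigr)$, so I would work in the $G$-algebra $A_a:=U\bigl(\lie{sl}_2(\bc)\otimes\bc[t]/(t(t-a))\bigr)$, with PBW generators $e_0,h_0,f_0$ of $t$-degree $0$ and $e_1,h_1,f_1$ of $t$-degree $1$, and a monomial ordering compatible with the degree ordering on $\bc[t]$, the parameter $a$ carrying degree $0$. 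The defining left ideal $I_a(\l,\m)$ is generated by the known relations $e_0$, $e_1$, $h_0-(\l+\m)$, $h_1-a\l$, $(f\otimes t)^{\l+1}$ and $\bigl(a(f\otimes 1)-(f\otimes t)\bigr)^{\m+1}$. The first four have leading monomials $e_0,e_1,h_0,h_1$, which do not involve $a$ and are coprime to the rest, so modulo them the entire problem reduces to a Gr\"obner basis computation in the \emph{commutative} polynomial ring $\bc[f_0,f_1]$ for the ideal $J:=\bigl(f_1^{\l+1},\,(af_0-f_1)^{\m+1}\bigr)$ --- and the feature that makes this reduction possible is precisely that $\lie n^-=\bc f$ is abelian for $\lie{sl}_2$.

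The core of the argument is to write down the Gr\"obner basis of $J$. I expect it to be $\{G_0,G_1,\dots,G_\m,G_{\m+1}\}$, where $G_0=f_0^{\l+\m+1}$ (which lies in $J$ because $(f\otimes1)^{\l+\m+1}$ annihilates $V(\l+\m)$), $G_{\m+1}$ is the reduction of the second generator and has leading monomial $f_1^{\m+1}$, and for $1\le j\le\m$ the element $G_j\in J$ has the shape $f_0^{\l+\m+1-2j}f_1^{j}$ plus terms of strictly smaller $t$-degree, so that its leading monomial $f_0^{\l+\m+1-2j}f_1^{j}$ does not involve $a$. Existence of the $G_j$ amounts to showing $f_0^{\l+\m+1-2j}f_1^{j}$ lies in the leading-term ideal of $J$; I would prove this either by an explicit ansatz for $G_j$ whose membership in $J$ is checked by a direct computation in $\bc[x,y]/(x^{\l+1},y^{\m+1})$ under the substitution $f_1=ax$, $af_0-f_1=ay$, or by invoking the classical description of the $\lie{sl}_2$ fusion product, whose $j$-th graded component is $V(\l+\m-2j)$, cyclic over $U(\lie n^-)$ on the image of $(f\otimes t)^jv$, so that $(f\otimes1)^{\l+\m+1-2j}$ annihilates it. To see that $\{G_j\}$ is a Gr\"obner basis I would avoid the $S$-polynomial bookkeeping by a Hilbert-function argument: each $G_j$ lies in $J$, so $\bigl(f_0^{\l+\m+1-2j}f_1^{j}:0\le j\le\m\bigr)+(f_1^{\m+1})$ is contained in the leading-term ideal of $J$, while a direct count gives
\[
\dim_{\bc}\ \bc[f_0,f_1]\big/\bigl(f_0^{\l+\m+1-2j}f_1^{j}:0\le j\le\m\bigr)+(f_1^{\m+1})\;=\;\sum_{j=0}^{\m}(\l+\m-2j+1)\;=\;(\l+1)(\m+1)\;=\;\dim_{\bc}\bc[f_0,f_1]/J,
\]
so the two monomial ideals coincide. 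Hence $\{e_0,e_1,h_0-(\l+\m),h_1-a\l,G_0,\dots,G_\m,G_{\m+1}\}$ is a finite Gr\"obner basis of $I_a(\l,\m)$ whose leading monomials are independent of $a$, which is the assertion.

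The main obstacle is the step I called the core, namely pinning down the polynomials $G_j$: their lower-$t$-degree tails carry the real content, and making them lie in $J$ (equivalently, making all $S$-polynomials reduce to zero) is an intricate identity in binomial coefficients and powers of $a$ --- the non-trivial combinatorics advertised in the introduction. Everything else (the passage to $A_a$, the colength count, the Hilbert-function conclusion) is routine, but each use of commutativity of $\lie n^-$ is essential here and breaks down for $\lie{sl}_n$, where one must instead run Gr\"obner theory in a genuinely noncommutative $G$-algebra.
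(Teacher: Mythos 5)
The statement you were given is not one of the paper's theorems: it is the pair of long‑standing conjectures from \cite{FL99} (parameter independence of the $s$‑fold fusion product and its associativity), which the paper explicitly leaves open --- the surrounding text says they ``have remained open for the past 20 years,'' and no proof is offered anywhere in the paper. So there is no proof of this statement in the paper against which to measure yours.

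Your proposal does not prove the statement either. What you sketch is a proof of Conjecture~\ref{conj-flat} for $\lie g=\lie{sl}_2$ and exactly two tensor factors, which via Theorem~\ref{thm-main2} gives Conjecture~\ref{conj-fusion} for $\lie{sl}_2$ and $s=2$ --- and \emph{only} in that case does parameter independence follow, because the right-hand side $U(\lie g\otimes\bc[t])/I(\lambda_1,\lambda_2)$ is manifestly $a$‑free. Nothing in your argument touches part (1) of the conjecture for $s>2$ factors or for other $\lie g$, and part (2), associativity, is not addressed at all; the cyclic modules in part (2) need not even be evaluation modules, so your Gr\"obner setup does not apply. Moreover, even for the $\lie{sl}_2$, $s=2$ piece your proposal is incomplete where it matters: you defer the ``core'' identity in binomial coefficients (making the $G_j$ actually lie in $J$) to future work, and the Hilbert‑function route you suggest invokes the classical structure of the $\lie{sl}_2$ fusion product (i.e.\ the graded components being $V(\lambda+\mu-2j)$), which is precisely the content of \cite{FF02} and would make the argument circular if used as a substitute for the computation. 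The paper's Section~\ref{sec-sl2} does the hard part you skip: it writes down the $F_i$ explicitly, proves the needed commutator identities, uses Proposition~\ref{FdonotFactor} (an irreducibility/non‑factorization argument) and Proposition~\ref{theFs} to get the Gr\"obner property in $\bc(a)[f_0,f_1]$, and then promotes this to the $G$‑algebra via the Generalized Product Criterion (Lemma~\ref{gpc}) --- machinery that your sketch does not supply and cannot be replaced by the colength count alone without first knowing the $G_j$ exist and belong to $J$.
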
 
\bigskip

\subsection{Fusion product with two simple factors}

We consider here the case of the fusion product of two evaluation modules. Fix $\lambda_1, \lambda_2 \in P^+$, set $\lambda = \lambda_1 + \lambda_2$ and denote $I(\lambda_1, \lambda_2)$ the left ideal in $U(\lie g \otimes \bc[t])$ generated by
\[
\lie n^+ \otimes \bc[t],\quad
h_{\alpha} - \lambda(h_\alpha),\quad
\lie h \otimes t \bc[t]
\]
and
\[
f_{\alpha}^{\lambda(h_\alpha)+1},\quad
(f_\alpha \otimes t)^{\min\{ \lambda_1(h_\alpha),   
\,\lambda_2(h_\alpha)\} +1},\quad
\lie n^- \otimes t^2 \bc[t],
\]
where $\alpha$ ranges over all positive roots.
The following conjecture is due to E.~Feigin \cite{Fou15} and we will discuss its implications to Schur positivity and truncated local Weyl modules in Section~\ref{sec-conj}:
\begin{conj}\label{conj-fusion}
Let $a_1 \ne a_2 \in \bc$, then there is an isomorphism of graded $\lie g \otimes \bc[t]$-modules
\[
V(\lambda_1)_{a_1} \ast V(\lambda_2)_{a_2} \cong U(\lie g \otimes \bc[t])/I(\lambda_1,  \lambda_2).
\]
\end{conj}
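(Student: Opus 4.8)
The plan is to exhibit the fusion product as a Gröbner degeneration of the \emph{ungraded} tensor product of evaluation modules — whose defining ideal is already known explicitly — and then to compare leading-term ideals, reducing the conjecture to the existence of a suitable finite Gröbner basis. Write $U = U(\lie g \otimes \bc[t])$ and let $F$ denote the $t$-degree filtration. The automorphisms $t \mapsto t + a_1$ and $t \mapsto (a_2 - a_1)t$ of $\bc[t]$ induce Lie algebra automorphisms of $\lie g \otimes \bc[t]$ that preserve $F$ (though not the grading), so we may assume $a_1 = 0$ and set $a := a_2 \ne 0$; put $W := V(\lambda_1)_0 \otimes V(\lambda_2)_a$, $v := v_{\lambda_1} \otimes v_{\lambda_2}$, and $I_a(\lambda_1,\lambda_2) := \{ u \in U : u.v = 0 \}$, so that $W = U/I_a(\lambda_1,\lambda_2)$ and, by definition, $V(\lambda_1)_0 \ast V(\lambda_2)_a = \gr_F(W)$. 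As the annihilator of $v$ in a concrete module, $I_a(\lambda_1,\lambda_2)$ is generated by the obvious relations of the tensor product: the elements $e_\alpha \otimes t^k$, $h_\alpha - \lambda(h_\alpha)$, $f_\alpha^{\lambda(h_\alpha)+1}$ that kill $v$ outright, and $a$-deformations of the remaining generators of $I(\lambda_1,\lambda_2)$ — for instance $(h \otimes t^k) - a^{k-1}(h \otimes t)$ and $h \otimes t - a\lambda_2(h)$ for $\lie h \otimes t\bc[t]$, the element $\bigl((f_\alpha \otimes t) - a(f_\alpha \otimes 1)\bigr)^{\lambda_1(h_\alpha)+1}$ (resp.\ $(f_\alpha \otimes t)^{\lambda_2(h_\alpha)+1}$) according to which of $\lambda_1(h_\alpha), \lambda_2(h_\alpha)$ is smaller, for $(f_\alpha \otimes t)^{\min\{\lambda_1(h_\alpha),\lambda_2(h_\alpha)\}+1}$, and $(f_\alpha \otimes t^k) - a^{k-1}(f_\alpha \otimes t)$, $k \ge 2$, for $\lie n^- \otimes t^2\bc[t]$. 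Fix a monomial well-ordering on $U$ refining $F$; a direct check shows the leading monomial of each such relation equals the leading monomial $\LM(g_i)$ of the corresponding generator $g_i$ of $I(\lambda_1,\lambda_2)$, and none of these leading monomials involves $a$.

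Next, use the $G$-algebra Gröbner theory of Section~\ref{sec-grobner}: $U$ is a $G$-algebra in the PBW generators $f_\alpha \otimes t^k$, $h \otimes t^k$, $e_\alpha \otimes t^k$, and for any left ideal $I$ and any monomial well-ordering refining $F$ one has an isomorphism of graded $\lie g \otimes \bc[t]$-modules $\gr_F(U/I) \cong U/\LM(I)$, where $\LM(I)$ is the leading ideal of \Cref{rem-leading-ideal}; in particular $\dim U/\LM(I) = \dim U/I$ as graded spaces. Applied to $I = I_a(\lambda_1,\lambda_2)$, this identifies the fusion product with $U/\LM(I_a(\lambda_1,\lambda_2))$. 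The conjecture is then reduced to the following Gröbner-theoretic input, to be established separately (and carried out for $\lie{sl}_2$ in Section~\ref{sec-sl2}): \emph{$I_a(\lambda_1,\lambda_2)$ admits a finite Gröbner basis whose leading monomials do not involve $a$.} Concretely one runs Buchberger's algorithm in $U$ starting from the explicit generators above, proves that the $S$-polynomial reductions terminate, and verifies that every leading monomial produced already lies in the left ideal $\langle \LM(g_i) \rangle$. Combined with the inclusion $\langle \LM(g_i) \rangle \subseteq \LM(I_a(\lambda_1,\lambda_2))$ from the first paragraph, this gives $\LM(I_a(\lambda_1,\lambda_2)) = \langle \LM(g_i) \rangle$.

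It remains to compare with $U/I(\lambda_1,\lambda_2)$. On one hand, every generator $g_i$ of $I(\lambda_1,\lambda_2)$ annihilates the cyclic generator $\bar v$ of the fusion product: $e_\alpha \otimes t^k$, $h_\alpha - \lambda(h_\alpha)$, $f_\alpha^{\lambda(h_\alpha)+1}$ already annihilate $v$; $h \otimes t^k$ ($k \ge 1$) acts on $v$ by a scalar, hence through an element of strictly smaller $t$-degree, so kills $\bar v$; and $(f_\alpha \otimes t)^{\min\{\lambda_1(h_\alpha),\lambda_2(h_\alpha)\}+1}$ and $f_\alpha \otimes t^k$ ($k \ge 2$) — which need not annihilate $v$ — agree, modulo $I_a(\lambda_1,\lambda_2)$, with genuine annihilators of $v$ up to terms of strictly smaller $t$-degree, hence kill $\bar v$ in the associated graded. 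This yields a surjection $\pi \colon U/I(\lambda_1,\lambda_2) \twoheadrightarrow V(\lambda_1)_0 \ast V(\lambda_2)_a$ of graded $\lie g \otimes \bc[t]$-modules. On the other hand $\langle \LM(g_i) \rangle \subseteq \LM(I(\lambda_1,\lambda_2))$ holds for free, so with $\dim U/\LM(J) = \dim U/J$ we get
\[
\dim\bigl(V(\lambda_1)_0 \ast V(\lambda_2)_a\bigr) = \dim U/\langle \LM(g_i) \rangle \ \ge\ \dim U/\LM\bigl(I(\lambda_1,\lambda_2)\bigr) = \dim U/I(\lambda_1,\lambda_2) \ \ge\ \dim\bigl(V(\lambda_1)_0 \ast V(\lambda_2)_a\bigr),
\]
the last inequality being $\pi$. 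Hence all dimensions agree, $\pi$ is an isomorphism, and Conjecture~\ref{conj-fusion} follows.

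The crux is the Gröbner-theoretic input, and this is the main obstacle. Since $I_a(\lambda_1,\lambda_2)$ is a priori infinitely generated (families indexed by $k$ and by the positive roots), Noetherianity of $G$-algebras yields a finite Gröbner basis only abstractly; one must exhibit it, bound it, and — most delicately — arrange the monomial ordering so that the $a$-dependent correction terms remain non-leading throughout all $S$-polynomial reductions. For $\lie g = \lie{sl}_2$ this is already the intricate combinatorics of Section~\ref{sec-sl2}; for $\lie{sl}_n$ the simultaneous interaction of the powers of the various $f_\alpha \otimes t$ across different positive roots is expected to make the analysis substantially harder.
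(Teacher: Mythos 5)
Your proposal follows essentially the same strategy as the paper. You should note, though, that the statement is a \emph{conjecture}: neither you nor the paper proves it in general. What you have written is a reduction of the conjecture to a Gr\"obner-theoretic input (``$I_a(\lambda_1,\lambda_2)$ admits a finite Gr\"obner basis whose leading monomials are $a$-independent''), together with a dimension-count argument showing that input forces the canonical surjection $U/I(\lambda_1,\lambda_2)\twoheadrightarrow V(\lambda_1)_{a_1}\ast V(\lambda_2)_{a_2}$ to be an isomorphism. That is precisely the content of Conjecture~\ref{conj-flat} and Theorem~\ref{thm-main2}, combined with the explicit $\lie{sl}_2$ Gr\"obner basis of Theorem~\ref{thm-grobner-left}; your annihilator description mirrors Lemma~\ref{lem-gen-fus}, and your reduction to $a_1=0$ mirrors Proposition~\ref{prop-fus-para}.

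One technical caveat worth flagging: you invoke $G$-algebra Gr\"obner theory directly in $U(\lie g\otimes\bc[t])$, but $G$-algebras are by definition finitely generated, and finiteness of Gr\"obner bases is only guaranteed in that Noetherian setting. The paper handles this by a two-step degeneration (Lemma~\ref{lem-grobner-bi}, Theorem~\ref{thm-2at1}): first one factors out the two-sided ideal generated by $\lie g\otimes(t^2-at)$ to land in the finitely generated $G$-algebra $U(\lie g\otimes\bc[t]/(t^2-at))$, and only then computes a left Gr\"obner basis of the image of $I_a(\lambda_1,\lambda_2)$ there. Your generators of the form $(f_\alpha\otimes t^k)-a^{k-1}(f_\alpha\otimes t)$ implicitly encode the same truncation, but the finitely generated intermediate algebra should be made explicit so that the Buchberger algorithm, the Generalized Product Criterion, and the flatness argument are all actually available. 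A second, smaller point: the identification ``$\gr_F(U/I)\cong U/\LM(I)$'' as graded $\lie g\otimes\bc[t]$-modules is too strong as stated, since $U/\LM(I)$ quotients by a \emph{commutative} monomial ideal in the sense of Remark~\ref{rem-leading-ideal} and is not naturally a $\lie g\otimes\bc[t]$-module; what you need (and what your dimension chain in fact uses) is only the numerical statement $\dim U/I=\dim U/\LM(I)$, which is fine.
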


\begin{rem} A few remarks need to be made.
\begin{enumerate}
    \item This conjecture has been proved for $\lie{sl}_2$ in \cite{FF02}, but we provide a different proof in the current paper.
    \item The conjecture has been proved for $\lie{sl}_n$ and $\lambda_1 \gg \lambda_2$, i.e. $\lambda_1 + \text{weights } (V(\lambda_2)) \subset P^+$, in \cite{FFL11}. The proof uses a new type of monomial bases for $V(\lambda)$ and initiated the framework on PBW degenerations (\cite{FFL11}).
    \item Various cases such as multiples of fundamental weights in the $\lie{sl}_n$-case are discussed in \cite{Fou15}.
\end{enumerate}
\end{rem}

We are aiming to prove this conjecture and make a first step towards a proof by reformulating the conjecture into the language of Gr\"obner bases. We will use the following proposition while omitting the obvious proofs.

\begin{prop}\label{prop-fus-para}
Let $a \in \bc$, then $x \otimes t \mapsto x \otimes (t-a)$ induces an automorphism $\phi_a$ of of $U(\lie g \otimes \bc[t])$. For $a_1 \neq a_2 \in \bc$, $\lambda_1, \lambda_2 \in P^+$, using the pullback gives
$$
\phi_{a_1}^* V(\lambda_1, \lambda_2, a_1, a_2) \cong V(\lambda_1, \lambda_2, 0, a_2-a_1).
$$
\end{prop}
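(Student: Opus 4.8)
The plan is to prove the two assertions of the proposition separately, both by direct verification. First, for the well-definedness of $\phi_a$: the substitution $p(t)\mapsto p(t-a)$ is a $\bc$-algebra automorphism $\psi_a$ of $\bc[t]$, with inverse $\psi_{-a}$. Tensoring with $\mathrm{id}_{\lie g}$ gives a linear map $\lie g\otimes\bc[t]\to\lie g\otimes\bc[t]$ which preserves the current bracket, since
\[
[x\otimes p,\,y\otimes q]=[x,y]_{\lie g}\otimes pq\ \longmapsto\ [x,y]_{\lie g}\otimes\psi_a(p)\psi_a(q)=[\,x\otimes\psi_a(p),\,y\otimes\psi_a(q)\,],
\]
using that $\psi_a$ is multiplicative. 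Hence it is a Lie algebra automorphism of $\lie g\otimes\bc[t]$, and by the universal property of the universal enveloping algebra it extends uniquely to an algebra endomorphism $\phi_a$ of $U(\lie g\otimes\bc[t])$; the endomorphism $\phi_{-a}$ is a two-sided inverse, so $\phi_a$ is an automorphism. One should note that $\phi_a$ preserves the filtration $U(\lie g\otimes\bc[t])_{\le r}$ but, for $a\ne 0$, does not preserve the grading.

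Second, for the pullback isomorphism, I would realize it as the identity map on the common underlying $\lie g$-module $V(\lambda_1)\otimes V(\lambda_2)$ and check that it intertwines the two $\lie g\otimes\bc[t]$-actions. Since a $\lie g\otimes\bc[t]$-module structure is determined by the underlying action of the Lie algebra $\lie g\otimes\bc[t]$, it is enough to compare the actions of the elements $x\otimes p$, which on a tensor product act by the Leibniz rule. Thus on $V(\lambda_1)_{a_1}\otimes V(\lambda_2)_{a_2}$ one has
\[
(x\otimes p).(v_1\otimes v_2)=p(a_1)\,(x.v_1)\otimes v_2+v_1\otimes p(a_2)\,(x.v_2),
\]
while by definition of the pullback the action of $x\otimes p$ on $\phi_{a_1}^{*}V(\lambda_1,\lambda_2,a_1,a_2)$ is that of $\phi_{a_1}(x\otimes p)=x\otimes p(t-a_1)$, namely $p(0)\,(x.v_1)\otimes v_2+v_1\otimes p(a_2-a_1)\,(x.v_2)$. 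This is exactly the action of $x\otimes p$ on $V(\lambda_1)_0\otimes V(\lambda_2)_{a_2-a_1}=V(\lambda_1,\lambda_2,0,a_2-a_1)$, so the identity map is the desired isomorphism of $\lie g\otimes\bc[t]$-modules.

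There is no genuine obstacle here; this is the routine verification that the paper omits. The two points that deserve a moment's care are bookkeeping ones: one must pull back along $\phi_{a_1}$ itself, not along $\phi_{a_1}^{-1}=\phi_{-a_1}$ (the choice being forced by the requirement that the first evaluation parameter be normalized to $0$), and one should keep in mind that $\phi_a$ is only a filtered, not a graded, automorphism. The latter is exactly what makes this proposition the right tool for reducing the two-parameter problem: after pulling back and passing to associated graded modules, the study of the fusion product in Conjecture~\ref{conj-fusion} is reduced to that of $V(\lambda_1)_0\ast V(\lambda_2)_{a_2-a_1}$, i.e.\ to a single evaluation parameter $a=a_2-a_1$.
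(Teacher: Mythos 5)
Your proof is correct and provides exactly the routine verification that the paper explicitly omits (``We will use the following proposition while omitting the obvious proofs''). The two checks you perform --- that $\psi_a$ extends to a Lie algebra automorphism and hence to $\phi_a$ on the enveloping algebra, and that twisting by $\phi_{a_1}$ shifts the evaluation parameters from $(a_1,a_2)$ to $(0,a_2-a_1)$ via $p(t)\mapsto p(t-a_1)$ --- are the right ones, and your closing remark that $\phi_a$ is filtered but not graded is a useful observation that is implicitly used later when passing to the fusion product.
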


This allows us to restrict the analysis of fusion products to parameters of the form $(0,a) \in \{0 \} \times \bc^*$. 
\begin{lem}\label{lem-gen-fus}
Let $\lambda_1, \lambda_2 \in P^+$, $a \in \bc^*$, then $V(\lambda_1, \lambda_2, 0, a)$ is the $U(\lie g \otimes \bc[t])$-module 
presented via the left ideal $I_a(\lambda_1, \lambda_2)$, which is  generated by
\[
\lie n^+ \otimes \bc[t],\quad
h - (\lambda_1 + \lambda_2)(h),\quad
h \otimes t - a \lambda_2(h),
\]
for all $h \in \lie h$, and
\[
x \otimes t^2 - ax \otimes t,\quad
f_{\alpha}^{(\lambda_1 + \lambda_2)(h_\alpha) +1 },\quad
(f_{\alpha} \otimes t)^{ \lambda_2(h_\alpha) +1}, \quad (f_{\alpha} \otimes (t-a))^{ \lambda_1(h_\alpha) +1},
\]
for all $x \in \lie g$ and positive roots $\alpha \in R^+$.
\end{lem}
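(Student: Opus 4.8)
Write $U:=U(\lie g\otimes\bc[t])$ and let $v:=v_{\lambda_1}\otimes v_{\lambda_2}\in V(\lambda_1)_0\otimes V(\lambda_2)_a=V(\lambda_1,\lambda_2,0,a)$, a highest weight vector of weight $\lambda_1+\lambda_2$. The plan is to realize the claimed presentation as the map $U/I_a(\lambda_1,\lambda_2)\to V(\lambda_1,\lambda_2,0,a)$, $\bar 1\mapsto v$. Since $V(\lambda_1,\lambda_2,0,a)$ is simple over $\lie g\otimes\bc[t]$, this map is automatically surjective once it is well defined, so the whole statement reduces to the equality $I_a(\lambda_1,\lambda_2)=\operatorname{Ann}_U(v)$. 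The inclusion ``$\subseteq$'' is a routine check of the generators, using only that $x\otimes p$ acts on an evaluation module $V_b$ by $p(b)\,x$ and that $\lie g\otimes\bc[t]$ acts on tensor products through the coproduct: $\lie n^+\otimes\bc[t]$ and the Cartan generators kill $v$ because $e_\alpha v_{\lambda_i}=0$ and $hv_{\lambda_i}=\lambda_i(h)v_{\lambda_i}$; $x\otimes t^2-ax\otimes t=x\otimes t(t-a)$ acts as $0$ on all of $V(\lambda_1)_0\otimes V(\lambda_2)_a$ since $t(t-a)$ vanishes at $0$ and at $a$; $(f_\alpha\otimes t)^{\lambda_2(h_\alpha)+1}$ and $(f_\alpha\otimes(t-a))^{\lambda_1(h_\alpha)+1}$ act as nonzero scalars times $\operatorname{id}\otimes f_\alpha^{\lambda_2(h_\alpha)+1}$, respectively $f_\alpha^{\lambda_1(h_\alpha)+1}\otimes\operatorname{id}$, and so kill $v$; and $f_\alpha^{(\lambda_1+\lambda_2)(h_\alpha)+1}$ kills $v$ because $v$ is a highest weight vector of weight $(\lambda_1+\lambda_2)(h_\alpha)$ for the $\lie{sl}_2$-triple attached to $\alpha$.

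For ``$\supseteq$'' I would use a Chinese Remainder reduction. First, I claim $\lie g\otimes t(t-a)\bc[t]\subseteq I_a(\lambda_1,\lambda_2)$, equivalently $(x\otimes t^n(t-a))\bar 1=0$ in $U/I_a(\lambda_1,\lambda_2)$ for all $x\in\lie g$ and $n\ge 1$. This I prove by induction on $n$; the case $n=1$ is a generator of the ideal. In the inductive step, decompose $\lie g=\lie n^+\oplus\lie h\oplus\lie n^-$: for $x\in\lie n^+$ one has $x\otimes t^{n+1}(t-a)\in\lie n^+\otimes\bc[t]\subseteq I_a(\lambda_1,\lambda_2)$; for $x=h_\alpha$ write $h_\alpha\otimes t^{n+1}(t-a)=[e_\alpha\otimes t,\,f_\alpha\otimes t^{n}(t-a)]$, expand the bracket as a difference of two products in $U$, and apply it to $\bar 1$ — the first product gives $0$ by the inductive hypothesis, the second because $(e_\alpha\otimes t)\bar 1=0$; as the coroots span $\lie h$, this covers all of $\lie h$; finally for $x=f_\beta$ choose $h\in\lie h$ with $\beta(h)\ne 0$, write $f_\beta\otimes t^{n+1}(t-a)$ as a scalar multiple of $[h\otimes t,\,f_\beta\otimes t^{n}(t-a)]$ and argue the same way, now using that $(h\otimes t)\bar 1=a\lambda_2(h)\bar 1$ is a scalar multiple of $\bar 1$, so that the ``wrong-order'' product $(f_\beta\otimes t^{n}(t-a))(h\otimes t)\bar 1$ again vanishes by the inductive hypothesis. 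With this the $U$-action on $U/I_a(\lambda_1,\lambda_2)$ factors through $U\twoheadrightarrow U(\lie g\otimes A)$, where $A:=\bc[t]/(t(t-a))\cong\bc\times\bc$ by evaluation at $0$ and at $a$ (this uses $a\ne 0$), so $U(\lie g\otimes A)\cong U(\lie g)\otimes U(\lie g)$ with $x\otimes 1\mapsto(x,x)$, $x\otimes t\mapsto(0,ax)$.

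It then remains to trace the remaining generators of $I_a(\lambda_1,\lambda_2)$ through this identification. Their images generate exactly $J_1\otimes U(\lie g)+U(\lie g)\otimes J_2$, where $J_i\subseteq U(\lie g)$ is the classical defining left ideal of $V(\lambda_i)$ (generated by $\lie n^+$, the $h-\lambda_i(h)$, and the $f_\alpha^{\lambda_i(h_\alpha)+1}$): the image of $\lie n^+\otimes\bc[t]$ spans $\lie n^+\otimes 1$ together with $1\otimes\lie n^+$, the pair $h-(\lambda_1+\lambda_2)(h)$, $h\otimes t-a\lambda_2(h)$ is a scalar recombination of $(h-\lambda_1(h))\otimes 1$ and $1\otimes(h-\lambda_2(h))$, the powers $(f_\alpha\otimes t)^{\lambda_2(h_\alpha)+1}$ and $(f_\alpha\otimes(t-a))^{\lambda_1(h_\alpha)+1}$ become, up to nonzero scalars, $1\otimes f_\alpha^{\lambda_2(h_\alpha)+1}$ and $f_\alpha^{\lambda_1(h_\alpha)+1}\otimes 1$, and the image of $f_\alpha^{(\lambda_1+\lambda_2)(h_\alpha)+1}$, namely $(f_\alpha\otimes 1+1\otimes f_\alpha)^{(\lambda_1+\lambda_2)(h_\alpha)+1}$, already lies in $J_1\otimes U(\lie g)+U(\lie g)\otimes J_2$ by the highest weight remark of the first paragraph (and the reverse inclusion of ideals holds because every generator of $I_a(\lambda_1,\lambda_2)$ kills $v$, hence its image kills $v_{\lambda_1}\otimes v_{\lambda_2}$, whose annihilator is $J_1\otimes U(\lie g)+U(\lie g)\otimes J_2$). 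Hence $U/I_a(\lambda_1,\lambda_2)\cong (U(\lie g)\otimes U(\lie g))/(J_1\otimes U(\lie g)+U(\lie g)\otimes J_2)\cong V(\lambda_1)\boxtimes V(\lambda_2)$ as $\lie g\oplus\lie g$-modules; pulling this back along $\lie g\otimes\bc[t]\to\lie g\oplus\lie g$ returns precisely $V(\lambda_1)_0\otimes V(\lambda_2)_a=V(\lambda_1,\lambda_2,0,a)$, with $\bar 1$ matching $v$, which is the asserted isomorphism.

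The delicate point is the induction of the second paragraph. The obvious attempt to raise the $t$-degree of the relation $x\otimes t(t-a)$ by bracketing against $\lie g\otimes t$ is circular in $U$: for a \emph{left} ideal, ``$BA\in I_a(\lambda_1,\lambda_2)$'' is exactly what one wants to prove when $[A,B]=AB-BA$ and $AB\in I_a(\lambda_1,\lambda_2)$. The way around it is to arrange the bracket so that the factor landing to the right of $\bar 1$ is $e_\alpha\otimes t$ or $h\otimes t$ with $h\in\lie h$, which act on $\bar 1$ by the scalars $0$ and $a\lambda_2(h)$ — this is precisely where the generator $h\otimes t-a\lambda_2(h)$ of $I_a(\lambda_1,\lambda_2)$ enters. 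Apart from this, the only external ingredient is the classical presentation $U(\lie g)/J_i\cong V(\lambda_i)$.
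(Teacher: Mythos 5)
Your proof is correct, and it takes a genuinely different route from the paper's. Both proofs hinge on the Chinese-remainder isomorphism $\bc[t]/(t(t-a))\cong\bc\times\bc$ and the consequent identification $U(\lie g\otimes\bc[t]/(t(t-a)))\cong U(\lie g)\otimes U(\lie g)$, but they reach it by different means. The paper first observes that $U(\lie g\otimes\bc[t])/I_a(\lambda_1,\lambda_2)$ is a cyclic highest-weight module with one-dimensional highest weight space, invokes the universal property and finite-dimensionality of local Weyl modules from \cite{CP01} to conclude it is a quotient of $W_0(\lambda_1)\otimes W_a(\lambda_2)$ and hence a module for $\lie g\otimes\bc[t]/t^N\oplus\lie g\otimes\bc[t]/(t-a)^N$ for some $N$, and then uses the single relation $x\otimes(t^2-at)$ to bring $N$ down to $1$; the last step is stated tersely and implicitly uses the kind of bracketing argument you make explicit. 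You replace the appeal to Weyl module theory by a direct induction showing $\lie g\otimes t(t-a)\bc[t]\subseteq I_a(\lambda_1,\lambda_2)$, carefully arranging each commutator so the factor that ends up right of $\bar 1$ is one of the generators $e_\alpha\otimes t$ or $h\otimes t-a\lambda_2(h)$ acting by a scalar, thereby evading the circularity you flag. You then trace the remaining generators through the CRT identification to land on $J_1\otimes U(\lie g)+U(\lie g)\otimes J_2$ and use the classical presentation $U(\lie g)/J_i\cong V(\lambda_i)$. What each approach buys: the paper's argument is shorter by delegating the hard finite-dimensionality step to known Weyl-module theory, while your argument is self-contained (no external citation beyond the classical presentation of $V(\lambda_i)$), makes the ``set $N=1$'' step fully rigorous, and exhibits the final isomorphism explicitly rather than by a surjection-between-equidimensional-spaces count. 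One small point you assert rather than argue: that proving $\lie g\otimes t(t-a)\bc[t]$ kills the cyclic generator $\bar 1$ already forces the $U$-action to factor through $U(\lie g\otimes\bc[t]/(t(t-a)))$. This is true because $\lie g\otimes t(t-a)\bc[t]$ is a Lie ideal of $\lie g\otimes\bc[t]$, so by pushing such an element to the right past arbitrary factors $y\otimes q$ one stays inside the ideal and the annihilation of the generator propagates to the whole cyclic module; it is worth a sentence.
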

\begin{proof}
First of all, we notice that $V(\lambda_1, \lambda_2, 0, a)$ is a quotient of $U(\lie g \otimes \bc[t])/ I_a(\lambda_1, \lambda_2)$. We have a closer look at the latter module. $U(\lie g \otimes \bc[t])/I_a(\lambda_1, \lambda_2)$ is in fact a cyclic highest weight module with one-dimensional highest weight space, due to the relations
\[
\lie n^+ \otimes \bc[t],\quad
h - (\lambda_1 + \lambda_2)(h),\quad
h \otimes t - a \lambda_2(h).
\]
This implies that $U(\lie g \otimes \bc[t])/ I_a(\lambda_1, \lambda_2)$ is a quotient of the local Weyl module $W_0(\lambda_1) \otimes W_a(\lambda_2)$ (see \cite{CP01} or Section~\ref{sec-weyl} for more details). 
So it is a module for $\lie g \otimes \bc[t]/t^N \oplus \lie g \otimes \bc[t]/(t-a)^N$ for some $N > 0$. Using 
\[x \otimes t^2 - ax \otimes t = 0 \text{ for all } x \in \lie g,
\]
we can set $N = 1$ and hence $U(\lie g \otimes \bc[t])/ I_a(\lambda_1, \lambda_2)$ is a tensor product of evaluation modules.

\end{proof}

This gives a one-parameter family of left ideals in $U(\lie g \otimes \bc[t])$ and hence we are able to apply methods from the theory of Gr\"obner bases  (for the setup, we refer to Section~\ref{sec-grobner}):
\begin{conj}\label{conj-flat}
$I_a(\lambda_1, \lambda_2)$ is a flat family of left  ideals (over $\bc[a]$) in $U(\lie g \otimes \bc[t])$ and there exists a monomial ordering on $U(\lie g \otimes \bc[t])$ such that the leading term ideals of $I_a(\lambda_1, \lambda_2)$ and $I(\lambda_1, \lambda_2)$ coincide.
\end{conj}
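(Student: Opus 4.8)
\emph{Strategy.} The plan is to deduce Conjecture~\ref{conj-flat} from the existence of a finite Gr\"obner basis of $I_a(\lambda_1,\lambda_2)$ whose leading terms do not involve the parameter $a$: granting such a basis, both flatness over $\bc[a]$ and the coincidence of leading-term ideals will be formal, so essentially all of the work lies in producing the basis. \emph{Step 1 (reduction to a $G$-algebra over $\bc[a]$).} Although $U(\lie g\otimes\bc[t])$ is not finitely generated, the relations $x\otimes t^2-a\,x\otimes t$ belonging to $I_a(\lambda_1,\lambda_2)$ confine all the relevant reductions to the subalgebra generated by $\lie g\otimes 1$ and $\lie g\otimes t$. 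Over $\bc[a]$ this is the enveloping algebra of a Lie algebra of rank $2\dim\lie g$ over $\bc[a]$, interpolating between $\lie g\oplus\lie g$ for $a\neq 0$ and the semidirect product $\lie g\ltimes\lie g_{\mathrm{ab}}$ for $a=0$; it is a $G$-algebra over the coefficient ring $\bc[a]$ in the sense of Section~\ref{sec-grobner}, so the Gr\"obner machinery for $G$-algebras applies to the left ideal $I_a(\lambda_1,\lambda_2)$.

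\emph{Step 2 (ordering and Gr\"obner basis).} I would fix a monomial well-ordering compatible with the $t$-degree filtration on $U(\lie g\otimes\bc[t])$ --- so that passing to leading terms computes the associated graded module, that is, the fusion product --- and ranking $a$ below all the variables $x\otimes t$, so that the leading terms of the obvious generators of $I_a(\lambda_1,\lambda_2)$ carry no $a$. It then remains to produce a finite Gr\"obner basis $\mathcal G_a$ of $I_a(\lambda_1,\lambda_2)$ over $\bc[a]$ all of whose leading terms --- monomials together with their (hence constant) coefficients --- are independent of $a$. This is the decisive and by far the hardest point: in the non-commutative, a priori infinite-dimensional setting even finiteness of a Gr\"obner basis is not automatic, and the condition that no leading term see $a$ is exactly the requirement that makes the basis \emph{appropriate}. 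In general it is open; for $\lie{sl}_2(\bc[t])$ it is carried out explicitly in Section~\ref{sec-sl2}, where the combinatorics of $\mathcal G_a$ is already quite intricate, and $\lie{sl}_n(\bc[t])$ is expected to be harder still.

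\emph{Step 3 (flatness and identification of leading ideals).} Granting Step 2, the leading coefficients being nonzero constants, the monomials not lying in $\LM(I_a(\lambda_1,\lambda_2))$ form a free $\bc[a]$-basis of $U(\lie g\otimes\bc[t])[a]\,/\,I_a(\lambda_1,\lambda_2)$ that is independent of the specialization of $a$; hence this quotient is free, in particular flat, over $\bc[a]$, which is the first assertion. For the second, specialize $a=0$: in Lemma~\ref{lem-gen-fus} the relations $x\otimes t^2$ then generate, as a left ideal, all of $\lie g\otimes t^2\bc[t]$ (in particular $\lie n^-\otimes t^2\bc[t]$), the pair $(f_\alpha\otimes t)^{\lambda_2(h_\alpha)+1}$, $(f_\alpha\otimes(t-a))^{\lambda_1(h_\alpha)+1}$ collapses to $(f_\alpha\otimes t)^{\min\{\lambda_1(h_\alpha),\lambda_2(h_\alpha)\}+1}$, and the remaining relations reproduce those defining $I(\lambda_1,\lambda_2)$ in Section~\ref{sec-setup}, so that $I_0(\lambda_1,\lambda_2)=I(\lambda_1,\lambda_2)$. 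Since the leading terms of $\mathcal G_a$ do not involve $a$, the specialization $\mathcal G_a|_{a=0}$ is a Gr\"obner basis of $I_0(\lambda_1,\lambda_2)$ with the same leading monomials, whence $\LM(I_a(\lambda_1,\lambda_2))=\LM(I_0(\lambda_1,\lambda_2))=\LM(I(\lambda_1,\lambda_2))$, as required; applied at $a=a_2-a_1\neq 0$ this is exactly the input needed for Conjecture~\ref{conj-fusion}.

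The principal obstacle is Step 2: Steps 1 and 3 are essentially formal once a degree-compatible elimination ordering and the $G$-algebra framework are in place, whereas constructing a Gr\"obner basis whose leading terms are free of $a$ is the real content, and at present this is achieved only for $\lie{sl}_2(\bc[t])$.
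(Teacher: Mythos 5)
Your proposal correctly reproduces the paper's own strategy for Conjecture~\ref{conj-flat}: pass to the finitely generated $G$-algebra $U(\lie g\otimes\bc[t]/(t^2-at))$ (this is exactly \Cref{lem-grobner-bi} together with \Cref{thm-2at1}'s ``two degenerations at once''), exhibit a left Gr\"obner basis of $I_a(\lambda_1,\lambda_2)$ whose leading terms do not see $a$, and deduce flatness plus $\LM(I_a)=\LM(I_0)=\LM(I)$ by observing $I_0(\lambda_1,\lambda_2)=I(\lambda_1,\lambda_2)$. You also correctly isolate the one genuinely open step — producing such a Gr\"obner basis — which the paper carries out only for $\lie{sl}_2$ in \Cref{sec-sl2}, so your outline matches the paper in both its structure and in the precise location of the remaining gap.
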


To prove the conjecture about the defining relations for the fusion products it is enough to prove the conjecture on the Gr\"obner basis:
\begin{thm}\label{thm-main2}
Conjecture~\ref{conj-flat} implies Conjecture~\ref{conj-fusion}.
\end{thm}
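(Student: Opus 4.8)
The plan is to turn the equality of leading term ideals asserted in Conjecture~\ref{conj-flat} into an equality of dimensions, and to combine it with the (easy) surjection from the ``obvious relations'' module onto the fusion product. By Proposition~\ref{prop-fus-para} it is enough to treat parameters $(0,a)$ with $a\in\bc^*$, and by Lemma~\ref{lem-gen-fus} the tensor product of evaluation modules is $U(\lie g\otimes\bc[t])/I_a(\lambda_1,\lambda_2)$, a finite-dimensional module of dimension $\dim_\bc\bigl(V(\lambda_1)\otimes V(\lambda_2)\bigr)$; its filtration by $t$-degree has, by definition, the fusion product $V(\lambda_1)_0\ast V(\lambda_2)_a$ as associated graded module.

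First I would record the surjection
\[
\pi\colon U(\lie g\otimes\bc[t])/I(\lambda_1,\lambda_2)\ \twoheadrightarrow\ V(\lambda_1)_0\ast V(\lambda_2)_a .
\]
Since the associated graded algebra of $U(\lie g\otimes\bc[t])$ for the $t$-filtration is again $U(\lie g\otimes\bc[t])$, one has $V(\lambda_1)_0\ast V(\lambda_2)_a\cong U(\lie g\otimes\bc[t])/\operatorname{in}(I_a)$, where $\operatorname{in}(I_a)$ is the left ideal generated by the $t$-leading forms of the elements of $I_a$. It then suffices to check $I(\lambda_1,\lambda_2)\subseteq\operatorname{in}(I_a)$; equivalently, that $I(\lambda_1,\lambda_2)$ is the specialization of the flat family at $a=0$. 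This is a direct comparison of the listed generators: the $t$-leading form of $h\otimes t-a\lambda_2(h)$ is $h\otimes t$, that of $x\otimes t^2-ax\otimes t$ is $x\otimes t^2$ (which, using $[\lie g,\lie g]=\lie g$ exactly as in the proof of Lemma~\ref{lem-gen-fus}, yields $\lie g\otimes t^2\bc[t]$), and the two relations $(f_\alpha\otimes t)^{\lambda_2(h_\alpha)+1}$ and $(f_\alpha\otimes(t-a))^{\lambda_1(h_\alpha)+1}$ have $t$-leading forms $(f_\alpha\otimes t)^{\lambda_2(h_\alpha)+1}$ and $(f_\alpha\otimes t)^{\lambda_1(h_\alpha)+1}$, the one with the smaller exponent generating the other inside a left ideal --- this is the origin of the exponent $\min\{\lambda_1(h_\alpha),\lambda_2(h_\alpha)\}+1$. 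As $I(\lambda_1,\lambda_2)$ is generated by $t$-homogeneous elements, $\pi$ is a morphism of graded $\lie g\otimes\bc[t]$-modules.

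Next I would fix the monomial well-ordering supplied by Conjecture~\ref{conj-flat}, for which $\LM(I_a)=\LM(I)$. By the Gr\"obner theory for $G$-algebras recalled in Section~\ref{sec-grobner}, a finitely generated left ideal $J\subseteq U(\lie g\otimes\bc[t])$ admits a finite Gr\"obner basis and the monomials not lying in $\LM(J)$ form a $\bc$-basis of $U(\lie g\otimes\bc[t])/J$; hence $\dim_\bc U(\lie g\otimes\bc[t])/J$ depends only on $\LM(J)$. Applying this to $J=I$ and to $J=I_a$ gives
\[
\dim_\bc U(\lie g\otimes\bc[t])/I\ =\ \dim_\bc U(\lie g\otimes\bc[t])/I_a\ =\ \dim_\bc\bigl(V(\lambda_1)\otimes V(\lambda_2)\bigr)<\infty ,
\]
the last equality by Lemma~\ref{lem-gen-fus}. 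Since $V(\lambda_1)_0\ast V(\lambda_2)_a\cong_{\lie g}V(\lambda_1)\otimes V(\lambda_2)$ has the same finite dimension, the surjection $\pi$ is an isomorphism; being graded and $\lie g\otimes\bc[t]$-equivariant, it is an isomorphism of graded $\lie g\otimes\bc[t]$-modules. Together with Proposition~\ref{prop-fus-para} this is Conjecture~\ref{conj-fusion}. (The flatness of the family over $\bc[a]$ in Conjecture~\ref{conj-flat} is the geometric counterpart of the constancy of $\LM(I_a)$ along the family and is what one would actually verify in practice; only the coincidence of leading term ideals enters the argument above.)

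Finally, the difficulty: for this statement it is by design entirely located in Conjecture~\ref{conj-flat} --- the implication itself is essentially formal. Within the proof, the two points that must not be skipped are the full inclusion $I\subseteq\operatorname{in}(I_a)$ in every $t$-degree (so that $\pi$ is well defined), which rests on $\lie g$ being simple, and the fact that a quotient of $U(\lie g\otimes\bc[t])$ by a finitely generated left ideal has a standard-monomial basis in this infinite-dimensional, non-commutative setting --- precisely the input imported from Section~\ref{sec-grobner}. Neither is deep; the real substance of the problem, namely producing a finite Gr\"obner basis of $I_a(\lambda_1,\lambda_2)$ whose leading terms avoid the parameter $a$, stays hidden inside the hypothesis.
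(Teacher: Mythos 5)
Your proof is correct and follows essentially the same route as the paper's: establish the surjection from $U(\lie g\otimes\bc[t])/I(\lambda_1,\lambda_2)$ onto the fusion product, deduce from the coincidence of leading-term ideals that $\dim U/I = \dim U/I_a$, compare with $\dim V(\lambda_1)\otimes V(\lambda_2)$ via Lemma~\ref{lem-gen-fus} and Proposition~\ref{prop-fus-para}, and conclude the surjection is an isomorphism. The only difference is that you unpack the phrase ``by definition of the ideal $I(\lambda_1,\lambda_2)$'' into an explicit check that $I\subseteq\operatorname{in}(I_a)$ generator-by-generator and spell out why standard monomials outside $\LM(J)$ give a basis of $U/J$; both are accurate elaborations rather than a change of strategy.
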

\begin{proof}
By definition of the ideal $I(\lambda_1, \lambda_2)$, there is a surjective map of $\lie g \otimes \bc[t]$-modules
$$
U(\lie g \otimes \bc[t])/I(\lambda_1, \lambda_2) \twoheadrightarrow V(\lambda_1)_{a_1} \ast V(\lambda_2)_{a_2},
$$
Conjecture~\ref{conj-flat} implies that 
\[
\dim U(\lie g \otimes \bc[t])/I(\lambda_1, \lambda_2) = \dim U(\lie g \otimes \bc[t])/I_a(\lambda_1, \lambda_2)
\]
but then with Proposition~\ref{prop-fus-para} and Lemma~\ref{lem-gen-fus}, we have
\[
 \dim U(\lie g \otimes \bc[t])/I_a(\lambda_1, \lambda_2) = \dim V(\lambda_1)_{a_1} \ast V(\lambda_2)_{a_2}.
 \]
Hence the surjective map is in fact an isomorphism.
\end{proof}

\section{Gr\"obner Theory in \texorpdfstring{$G$}{G}-algebras - Setup}\label{sec-grobner}
We recall in this section the Gr\"obner theory for $G$-algebras and how this applies to our setup. In fact, we first degenerate our $G$-algebra using a two-sided ideal and at the same time, using the same parameter, we degenerate a left ideal of the $G$-algebra.
\subsection{\texorpdfstring{$G$}{G}-algebras and Gr\"obner bases}
A total ordering $\leq$ on the monoid $(\bn_0^n,+,0)$ is called \emph{admissible} if $\alpha\leq\beta$ implies $\alpha+\gamma\leq\beta+\gamma$ for all $\alpha,\beta,\gamma\in\bn_0^n$.
Let $K$ be a field and $A$ a $K$-algebra generated by $x_1,\ldots,x_n$.
\begin{itemize}
\item The set of \emph{standard monomials} of $A$ is
\[
\Mon(A):=\{x^\alpha\mid\alpha\in\bn_0^n\}
:=\{x_1^{\alpha_1}x_2^{\alpha_2}\cdots x_n^{\alpha_n}\mid\alpha_i\in\bn_0\}.
\]
\item
Let $\leq$ be an admissible total ordering on $\bn_0^n$.
Any $f\in\text{$K$-span}(\Mon(A))\setminus\{0\}$ has a unique representation $f=\sum_{\alpha\in\bn_0^n}^{}c_\alpha x^\alpha$ with $c_\alpha\in K$, where $c_\alpha=0$ for almost all $\alpha$.
Now we define
\begin{itemize}
\item $\lexp(f):=\max\{\alpha\in\bn_0^n\mid c_\alpha\neq 0\}$, the \emph{leading exponent} of $f$ with respect to $\leq$,
\item $\lc(f):=c_{\lexp(f)}\in K\setminus\{0\}$, the \emph{leading coefficient} of $f$ with respect to $\leq$,
\item $\lm(f):=x^{\lexp(f)}\in\Mon(A)$, the \emph{leading monomial} of $f$ with respect to $\leq$.
\end{itemize}
\end{itemize}

For $n\in\bn$ and $1\leq i<j\leq n$ consider the constants $q_{ij} \in K\setminus\{0\}$ and
polynomials $d_{ij}\in K[x_1,\ldots,x_n]$.
Suppose that there exists an admissible ordering $\leq$ on $\bn_0^n$ such that for any $1\leq i<j\leq n$ either $d_{ij}=0$ or $\lexp(d_{ij})\leq\lexp(x_ix_j)$ holds.
The $K$-algebra
\[
A:=K\langle x_1,\ldots,x_n\mid \{ x_j x_i = q_{ij} x_i x_j + d_{ij}:1\leq i<j\leq n\}\rangle
\]
is called a \emph{$G$-algebra} if $\Mon(A)$ is a $K$-basis of $A$\footnote{which is equivalently formulated via algebraic relations between $q_{ij}$ and $d_{ij}$ \cite{LS03, LVdiss}, generalizing the Jacobi identities for commutators}. It is additionally \emph{of Lie type}, if all $q_{ij}=1$.

$G$-algebras (\cite{LS03, LVdiss}) are also known as algebras of solvable type and as PBW-algebras; they are left and right Noetherian domains. 
As an important example, universal enveloping algebras of finite-dimensional Lie algebras over arbitrary fields are $G$-algebras of Lie type.

Let $A$ be a $G$-algebra with the fixed monomial ordering $\prec$. 
For a left ideal $I \subset A$, a subset $G\subset I$ is a \emph{left Gr\"obner basis} of $I$, 
if 
for all $f \in I$ there exists $g\in G$ such that 
$\lexp(g) \leq_{cw} \lexp(f)$ (componentwise comparison). Over a $G$-algebra, one always finds a \emph{finite} left Gr\"obner basis. It is usually constructed by means of a \emph{generalized Buchberger's algorithm} \cite{LS03, LVdiss}, in which 
a set of \emph{critical pairs} is formed out of starting elements. Then, for each pair, the $s$-polynomial is formed, which is then \emph{reduced} to the remainder of the left division algorithm. If the remainder is nonzero, it is added to the starting set. The algorithm terminates when all critical pairs are reduced to zero.

\begin{rem}
\label{rem-leading-ideal}
The {\bf ideal of leading terms} 
over a $G$-algebra $A$ in, say, variables $x_1, \ldots, x_m$ over a field $K$ needs to be defined quite differently from the way it arises in the commutative case \cite{LVdiss}. Namely, let us collect leading exponents (with respect a fixed monomial ordering $\prec$) of polynomials from a left ideal $L \subset A$ into a set $Exp(L)$. It has a natural structure of a monoid ideal in $\mathbb{N}_0^m$, which is necessarily finitely generated by Dixon's lemma. Suppose the set of generators is $G(L)$. The ideal  
$Exp(L) = (G(L)) \subset \mathbb{N}_0^m$ is bijectively translated into the finitely generated monomial ideal 
$LT(L) := (x^{\alpha}: \alpha \in G(L))$ in the commutative ring $K[x_1, \ldots, x_m]$. We call
$LT(L) \subset K[x_1, \ldots, x_m]$
{\bf the ideal of leading terms of}
$L\subset A$.
Notably, the commutative ring $K[x_1, \ldots, x_m]$ need not be the associated graded algebra of $A$, though sometimes it is. A mini-example showing the necessity of this construction is as follows:
let $L = (x, x\partial +1 )$ in the first Weyl algebra $A=K\langle x, \partial \mid \partial x = x\partial +1\rangle$: the construction we've described gives $(x, x\partial) \in K[x,\partial]$ with $x$ as its' minimal basis, while the ideal of leading terms directly in $A$ is
$(x, x \partial) = (1)$ since 
$1 = \partial x - x \partial$ is in the ideal.
\end{rem}

Note that over $G$-algebras, finite right and two-sided Gr\"obner bases exist as well and are algorithmically computable.
All these and many other algorithms for the whole class of $G$-algebras and their factor-algebras are implemented in the freely available computer algebra system {\sc Singular} as a subsystem called {\sc Plural} \cite{LS03}.

For using Gr\"obner bases over $G$-algebras in theoretical arguments, the following \emph{Generalized Product Criterion} \cite{LS03, LVdiss} is very useful:
\begin{lem}
\label{gpc}%
Let $A$ be a $G$-algebra of Lie type
and $f, g \in A$. Suppose that $\lm(f)$ and $\lm(g)$ have no common factors, then the $s$-polynomial of $f$ and $g$ reduces to $[g,f]$ with respect to $\{f,g\}$.
\end{lem}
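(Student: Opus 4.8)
The plan is to adapt the classical Buchberger product criterion, keeping careful track of the commutator terms that non-commutativity forces into the picture; the whole proof turns on a single explicit rewriting of the $s$-polynomial. The first ingredient I would record is the multiplicativity of leading data in a $G$-algebra $A$: for nonzero $p,q\in A$ one has $\lexp(pq)=\lexp(p)+\lexp(q)$, and, because $A$ is of Lie type (all $q_{ij}=1$), also $\lc(pq)=\lc(p)\lc(q)$. This follows by induction on the number of adjacent transpositions needed to put a product of generators into standard form: the $G$-algebra hypothesis $\lexp(d_{ij})\le\lexp(x_ix_j)$ guarantees that each rewriting $x_jx_i\mapsto x_ix_j+d_{ij}$ introduces only terms of exponent $\preceq\lexp(x_ix_j)$, and for $q_{ij}=1$ the coefficient of $x_ix_j$ is left unchanged. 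This is exactly where \emph{Lie type} is indispensable: for general $q_{ij}$ the leading coefficient of $x^\alpha x^\beta$ is a monomial in the $q_{ij}$, and the conclusion would have to be stated with a twisted bracket $gf-q\,fg$ in place of $[g,f]$.

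Next, after rescaling I may assume $f$ and $g$ monic; set $x^\alpha=\lm(f)$, $x^\beta=\lm(g)$ and write $f=x^\alpha+\tilde f$, $g=x^\beta+\tilde g$ with $\lexp(\tilde f)\prec\alpha$ and $\lexp(\tilde g)\prec\beta$. Since $\lm(f)$ and $\lm(g)$ have no common factor, $\operatorname{lcm}(x^\alpha,x^\beta)=x^{\alpha+\beta}$; hence the two left cofactors entering the $s$-polynomial are $x^\beta$ and $x^\alpha$, and by the multiplicativity above $\lm(x^\beta f)=\lm(x^\alpha g)=x^{\alpha+\beta}$ with leading coefficient $1$, so their leading terms cancel and $S(f,g)=x^\beta f-x^\alpha g$. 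Substituting $x^\alpha=f-\tilde f$ and $x^\beta=g-\tilde g$ and expanding yields the key identity
\[
S(f,g)=(g-\tilde g)f-(f-\tilde f)g=(gf-fg)+\tilde f g-\tilde g f=[g,f]+\bigl(\tilde f g-\tilde g f\bigr).
\]

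It then remains to see that the error term $\tilde f g-\tilde g f$ reduces to $0$ with respect to $\{f,g\}$. Writing $\tilde f=\sum_\delta c_\delta x^\delta$ with each $\delta\preceq\lexp(\tilde f)\prec\alpha$, the polynomial $\tilde f g=\sum_\delta c_\delta\,x^\delta g$ is literally a $K$-linear combination of left multiples of $g$, and by admissibility each $x^\delta g$ has leading monomial $x^{\delta+\beta}\prec x^{\alpha+\beta}$; peeling off the top term $\lc(\tilde f)\,x^{\lexp(\tilde f)}g$ and subtracting strictly lowers the leading monomial, so by well-foundedness of the admissible ordering $\tilde f g$ reduces to $0$ using $g$ alone, and symmetrically $\tilde g f$ reduces to $0$ using $f$ alone. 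Consequently $S(f,g)-[g,f]$ admits a standard representation $\sum_i p_i g+\sum_j q_j f$ all of whose cofactors satisfy $\lm(p_ig),\lm(q_jf)\prec\operatorname{lcm}(\lm f,\lm g)$ — which is precisely the sense in which $S(f,g)$ reduces to $[g,f]$ with respect to $\{f,g\}$ in the generalized Buchberger algorithm.

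The two points that demand care are the ones flagged above: the multiplicativity of leading coefficients, whose proof uses the defining axioms of a $G$-algebra and genuinely requires $q_{ij}=1$; and the verification that every intermediate leading monomial occurring in the reductions of $\tilde f g$ and $\tilde g f$ stays strictly below the lcm, so that the rewriting is a bona fide reduction of $S(f,g)$ and not merely a congruence inside the left ideal. Both are routine bookkeeping once the identity $S(f,g)=[g,f]+\tilde f g-\tilde g f$ is in hand; that identity, the non-commutative shadow of the classical ``coprime leading terms $\Rightarrow$ the $s$-polynomial reduces to $0$'', is the substantive step.
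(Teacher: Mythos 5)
The paper does not supply a proof of this lemma; it is stated as the Generalized Product Criterion with a citation to \cite{LS03, LVdiss}, so there is no in-paper argument to compare against. Your proof is correct and is, in substance, the standard argument recorded in those references. The pivotal identity $S(f,g) = x^\beta f - x^\alpha g = (g-\tilde g)f - (f-\tilde f)g = [g,f] + \tilde f g - \tilde g f$ is exactly the noncommutative upgrade of the classical product-criterion computation, and the remainder $\tilde f g - \tilde g f$ is a left combination $\tilde f\cdot g - \tilde g\cdot f$ whose summands have leading exponents $\lexp(\tilde f)+\beta \prec \alpha+\beta$ and $\lexp(\tilde g)+\alpha \prec \alpha+\beta$ by admissibility; such a representation strictly below the lcm is precisely what ``reduces to $[g,f]$'' means in the generalized Buchberger algorithm. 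You also correctly pinpoint where Lie type is used ($\lc(pq)=\lc(p)\lc(q)$, so the $s$-polynomial cofactors really are the plain complementary monomials $x^\beta$ and $x^\alpha$ with matching leading coefficients, and $gf-fg$ is the ordinary bracket rather than a twisted one) and where it is not ($\lexp(pq)=\lexp(p)+\lexp(q)$ holds in every $G$-algebra).
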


That is, in the situation described in 
\Cref{gpc}, 
we can compute Lie brackets instead of some $s$-polynomials. 
However, note that in general
we cannot compute a Gr\"obner basis
of a left ideal by merely computing Lie brackets.

\subsection{Gr\"obner degenerations for current algebras}
We want to apply the theory of $G$-algebras to current algebras (see \Cref{sec-setup}).

\begin{lem}\label{lem-grobner-bi}
For $m\in\mathbb{N}$ let $p = t^m - \sum_{i=0}^{m-1} p_i t^i \in \bc[t]$ be a polynomial. Then $U\left(\lie{g} \otimes \bc[t]/(p)\right)$ is a $G$-algebra.
\end{lem}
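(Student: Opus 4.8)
The plan is to reduce everything to the fact, already recalled just above, that the universal enveloping algebra of an \emph{arbitrary} finite-dimensional Lie algebra is a $G$-algebra of Lie type; the only real content is then to recognise $\lie g\otimes\bc[t]/(p)$ as such a Lie algebra and to check that the resulting standard presentation literally satisfies the definition. First I would note that, since $p$ is monic of degree $m$, the images of $1,t,\dots,t^{m-1}$ form a $\bc$-basis of the commutative algebra $\bc[t]/(p)$, so $\lie g\otimes\bc[t]/(p)$ is a Lie algebra of dimension $m\cdot\dim\lie g$, with bracket $[x\otimes\bar q_1,\,y\otimes\bar q_2]=[x,y]_{\lie g}\otimes\overline{q_1q_2}$. (If one prefers to produce $U(\lie g\otimes\bc[t]/(p))$ from $U(\lie g\otimes\bc[t])$ directly, one observes that $\lie g\otimes(p)$ is a Lie ideal of $\lie g\otimes\bc[t]$ with quotient $\lie g\otimes\bc[t]/(p)$, and invokes the standard identification $U(\lie L/\lie k)\cong U(\lie L)/U(\lie L)\lie k$; but this detour is not needed for the statement.)

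Next I would fix an ordered basis $y_1,\dots,y_d$ of $\lie g$ and take for $x_1,\dots,x_n$ (with $n=md$) the basis $\{\,y_a\otimes t^i : 1\le a\le d,\ 0\le i\le m-1\,\}$ of $\lie g\otimes\bc[t]/(p)$ in any fixed order. By the PBW theorem the standard monomials $x^\alpha$ form a $\bc$-basis of $U:=U(\lie g\otimes\bc[t]/(p))$, and $U$ is presented by the relations $x_jx_i=x_ix_j+[x_j,x_i]$ for $i<j$, i.e.\ with all $q_{ij}=1$ and $d_{ij}=[x_j,x_i]$. The key point is that every $d_{ij}$ is a $\bc$-linear combination of $x_1,\dots,x_n$: one has $[y_a\otimes t^i,\,y_b\otimes t^j]=[y_a,y_b]_{\lie g}\otimes\overline{t^{i+j}}$, and reduction modulo $p$ keeps $\overline{t^{i+j}}$ in the span of $\bar 1,\dots,\overline{t^{m-1}}$, so no monomial of degree $\ge 2$ ever occurs in any $d_{ij}$. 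Hence for any degree-compatible admissible ordering on $\bn_0^n$ (for instance the degree-lexicographic one) we get, whenever $d_{ij}\ne 0$, that $\deg d_{ij}\le 1<2=\deg(x_ix_j)$, and therefore $\lexp(d_{ij})\le\lexp(x_ix_j)$.

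Combining these observations, the presentation just written down meets every requirement of the definition of a $G$-algebra of Lie type, and the lemma follows. I do not expect a genuine obstacle here: the one point that deserves a moment's care is precisely the remark that passing to the truncation $\bc[t]/(p)$ does \emph{not} raise the degree of the structure constants — it merely rewrites $\overline{t^{i+j}}$ for $i+j\ge m$ as a $\bc$-linear combination of lower powers — so that the bound $\lexp(d_{ij})\le\lexp(x_ix_j)$ holds automatically and no clever choice of monomial ordering is needed for this lemma (a more carefully adapted ordering will only matter later, for the actual degeneration).
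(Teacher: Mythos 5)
Your proof is correct, and it is essentially the ``immediate corollary'' argument: $\lie g\otimes\bc[t]/(p)$ is a finite-dimensional Lie algebra, PBW gives the standard-monomial basis, and the brackets of basis elements are again $\bc$-linear combinations of basis elements (reduction modulo $p$ cannot raise the degree), so any degree-compatible admissible ordering witnesses the $G$-algebra condition. The paper's proof takes a more hands-on route: it writes out the defining relations $[x_{ij},x_{k\ell}]$ explicitly in terms of the structure constants of $\lie g$ and the coefficients $d_{j+\ell,s}$ of $t^{j+\ell}\bmod p$, and then verifies the $G$-algebra property via the equivalent non-degeneracy (triple-commutator) conditions alluded to in the footnote, observing that these reduce to the Jacobi identity in $\lie g$. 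In other words, where you invoke PBW as a black box to obtain the monomial basis, the paper in effect re-derives it by exhibiting the relations as a two-sided Gr\"obner basis. The paper's version is slightly more informative for the sequel — the explicit relations and the observation that the conclusion holds verbatim when $p$ has coefficients in $\bc[a_1,\dots,a_q]$ are what make the later degeneration argument work, and your black-box PBW citation is stated over a field and would need a word of justification in the parametric setting — but for the lemma exactly as stated, both arguments are valid and prove the same thing.
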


\begin{proof}
Let $\{x_1, \ldots, x_n\}$ be a basis of $\lie{g}$. Denote by $x_{ij}:= x_i \otimes t^j$ for $1 \leq i \leq n$ and  $0 \leq j <m$. In the free algebra $\bc\langle \{ x_{ij} \} \rangle$, we derive the following relations 
$1 \leq i,k \leq n$ and  $0 \leq j,\ell <m$:
\begin{itemize}
\item By definition of the current algebras, $[x_{ij},x_{k\ell}] = [x_i,x_k]\otimes t^{j+\ell}$. 
\item In particular, $[x_{ij}, x_{i\ell}]=0$.
\item Suppose that $[x_i,x_k] = \sum_r c^{ik}_r x_r \neq 0$, then
\[
[x_i,x_k]\otimes t^{j+\ell}
= \begin{cases}
\sum_{r=1}^n c^{ik}_r x_{r,j+\ell} & \text{if } j+\ell < m, \\
\sum_{r=1}^n 
\sum_{s=0}^{m-1} 
c^{ik}_r d_{j+\ell,s} x_{rs}
& \text{if } m \leq j+\ell < 2m-1, \\
\end{cases}
\]
where $\sum_{s=0}^{m-1}
d_{j+\ell,s} t^s = t^{j+\ell} 
\mod p$.
\end{itemize}
As we can see, the relation form a two-sided Gr\"obner basis of a $G$-algebra type, since any triple commutator of, say $x_{ij}, x_{k\ell}, x_{uw}$, reduces to the triple commutator of $x_i,x_k,x_u$, which is subject to the Jacobi identity:
\[
[[x_{ij}, x_{k\ell}], x_{uw}] = [[x_i,x_k]\otimes t^{j+\ell}, x_u \otimes t^{w}]= [[x_i,x_k],x_u]\otimes t^{j+\ell+w}.
\]
\end{proof}
As we can see, the proof carries verbatim to the case of an arbitrary field; also to the case, where we allow the coefficients $p_i$ of $p$ to be from $\bc[a_1,\ldots,a_q]$
instead of just $\bc$ for indeterminants $a_i$ (which commute with elements from $\lie{g}$).

\subsection{Two degenerations at once}
We outline here how we restrict from $\lie g \otimes \bc[t]$ to finite-dimensional Lie algebras and why it is enough to consider Gr\"obner basis for the ideals $I_a( \lambda_1, \lambda_2)$.
\bigskip

For $a \in \bc$, we consider the ideal $I_a \subset U(\lie g \otimes \bc[t])$ generated by $\lie g \otimes (t^2 - at)$. Then 
\[
U(\lie g \otimes \bc[t])/I_a \cong U(\lie g \otimes \bc[t]/ (\lie g \otimes (t^2 - at))) \cong U(\lie g \otimes \bc[t]/(t^2 -at)).
\]
Consider a monomial well-ordering $\succ$ on the countably generated algebra $U(\lie g \otimes \bc[t])$, compatible with  the natural degree on $\bc[t]$.
Then $\bc[t]/(t^2 - at)$ defines a flat family, since the ideal of leading terms with respect to $\succ$ 
is independent on $a$.
Hence the ideal of leading terms of $I_a$ is in fact generated by $\lie g \otimes t^2$, it is again independent on $a$, 
and thus $I_a$ defines a Gr\"obner degeneration. So $U(\lie g \otimes \bc[t])/I_a $ is a flat family of $\bc[a]$-modules. \bigskip

 Consider now a left ideal $J_a \subset U(\lie g \otimes \bc[t])/I_a$ which admits a left Gr\"obner basis (with respect to a monomial ordering, being a restriction of $\succ$ above), such that the left ideal of leading terms of $J_a$ is 
 independent on $a$. Combining this with the finite generation of 
 $(U(\lie g \otimes \bc[t])/I_a)$ as an algebra, we 
 invoke a classical Gr\"obner argument: the   Gel'fand-Kirillov dimension (or the Krull dimension in the  commutative case), the $\bc$-dimension 
 and the rank over $\bc[a]$ of $(U(\lie g \otimes \bc[t])/I_a)/J_a$
 are all the same for any value of $a$. In other words, we have just proved
\begin{thm}\label{thm-2at1}
 Viewed as $\bc[a]$-modules, $\left(U(\lie g \otimes \bc[t])/I_a\right)/J_a$ is a flat family, \\ i.e.~$U(\lie g \otimes \bc[t]/(t^2))/J_0$ is the special fiber of a Gr\"obner degeneration.
 \end{thm}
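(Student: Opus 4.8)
The plan is to assemble the statement directly from the two degeneration mechanisms already set up in this section. First I would make the two-sided degeneration explicit: fixing a monomial well-ordering $\succ$ on $U(\lie g \otimes \bc[t])$ compatible with the $t$-degree, the ideal $I_a = (\lie g \otimes (t^2-at))$ has, for every $a$, the same leading-term ideal, namely the one generated by $\lie g \otimes t^2$, because the lower-degree tail $-at$ never contributes a leading term. By \Cref{lem-grobner-bi} (applied with $p = t^2 - at$, coefficients in $\bc[a]$, as noted in the remark following its proof) the quotient $U(\lie g \otimes \bc[t])/I_a \cong U(\lie g \otimes \bc[t]/(t^2-at))$ is a $G$-algebra whose defining relations form a two-sided Gr\"obner basis with $a$-independent leading terms; hence $\{U(\lie g \otimes \bc[t])/I_a\}_a$ is a flat family of $\bc[a]$-algebras, with special fiber $U(\lie g \otimes \bc[t]/(t^2))$ at $a=0$.

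Next I would feed in the left-ideal degeneration. By hypothesis $J_a$ is a left ideal of the $G$-algebra $A_a := U(\lie g \otimes \bc[t])/I_a$ admitting a finite left Gr\"obner basis (with respect to the restriction of $\succ$) whose leading-term ideal $LT(J_a)$, in the sense of \Cref{rem-leading-ideal}, does not depend on $a$. The standard-monomial complement of $LT(J_a)$ in $\Mon(A_a)$ is then an $a$-independent set $S$, and for every $a$ the images of $S$ form a $\bc$-basis of $A_a/J_a$ (this is the usual consequence of the Gr\"obner property: $S$ spans by the division algorithm, and is linearly independent because no nonzero combination of standard monomials lies in $J_a$). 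The same monomial count controls the Gel'fand--Kirillov dimension (or Krull dimension in the commutative situation) and the $\bc[a]$-rank of $(A_a)/J_a$, all of which are therefore constant in $a$ — this is exactly the ``classical Gr\"obner argument'' invoked before the theorem statement. I would phrase flatness concretely: $(A_a)/J_a$ is a free $\bc[a]$-module with basis indexed by $S$, which is the cleanest form of the assertion and immediately specializes at $a=0$ to $U(\lie g \otimes \bc[t]/(t^2))/J_0$.

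I would then combine the two: working over the base ring $\bc[a]$ throughout, $J_a$ should be taken as (the image of) a left ideal of $U(\lie g \otimes \bc[t]) \otimes_\bc \bc[a]$ containing $\lie g \otimes (t^2-at)$, so that $(U(\lie g \otimes \bc[t])/I_a)/J_a$ is really a single $\bc[a]$-module whose fiber at $a = a_0$ is obtained by base change $\bc[a] \to \bc$, $a \mapsto a_0$. Freeness over $\bc[a]$ on the basis $S$ gives that all fibers have the same dimension and that the module is flat; setting $a=0$ identifies the special fiber with $U(\lie g \otimes \bc[t]/(t^2))/J_0$, which is the content of \Cref{thm-2at1}.

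The main obstacle is not any single computation but the bookkeeping needed to run Gr\"obner theory honestly over the non-field base $\bc[a]$: one must check that a finite left Gr\"obner basis of $J_a$ with $a$-uniform leading terms really does produce an $a$-uniform standard-monomial basis (equivalently, that the division algorithm involves no division by elements of $\bc[a]$ that vanish at special $a$), and that the Gel'fand--Kirillov dimension and $\bc[a]$-rank arguments are compatible with the infinite-dimensional, non-commutative setting of $U(\lie g\otimes\bc[t])/I_a$ — which is where one genuinely uses that $A_a$ is a Noetherian $G$-algebra and that passing to leading terms is exact. Once this is granted the theorem is essentially a formality; the hard part is precisely pinning down the hypotheses on $J_a$ so that the word ``flat'' is earned rather than asserted.
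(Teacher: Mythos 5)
Your proposal follows the paper's argument closely: the two-sided degeneration of $I_a$ (leading terms generated by $\lie g \otimes t^2$, uniform in $a$) combined with the hypothesis that $J_a$ has an $a$-independent left leading-term ideal, yielding constancy of the relevant dimensions and hence flatness over $\bc[a]$. You add useful explicitness — spelling out the standard-monomial basis mechanism, the freeness over $\bc[a]$, and the base-change bookkeeping that the paper compresses into the phrase ``classical Gr\"obner argument'' — but the underlying reasoning is the same as the paper's.
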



\section{Schur positivity and local Weyl modules}\label{sec-conj}
In this section, we would like to explain the implications of Conjecture~\ref{conj-fusion}. These implications might be well known for experts but the connection will not be present for the general audience, so we include this section for the sake of completeness. We start with briefly recalling Schur positivity and local Weyl modules for truncated current algebras.

\subsection{Conjecture on Schur positivity}\label{sec-schur}
Let $\lambda \in P^+$, then following \cite{CFS12} we set
\[
\mathcal{P}_{\lambda} := \{ (\lambda_1, \lambda_2) \in P^+ \times P^+ \mid \lambda_1 + \lambda_2 = \lambda \}
\]
and define a partial order on $\mathcal{P}_\lambda$
\[
(\lambda_1, \lambda_2) \succeq (\mu_1, \mu_2)
\,
:\Longleftrightarrow
\,
(\lambda_1 - \lambda_2)(h_\alpha) \leq (\mu_1 - \mu_2)(h_\alpha) \; \forall \, \alpha \in R^+.
\]
This relation is equivalent to 
\[
\min \{ \lambda_1(h_\alpha), \lambda_2(h_\alpha) \} \leq \min \{\mu_1(h_\alpha), \mu_2(h_\alpha)\} \; \forall\, \alpha \in R^+.
\]
The cover relations for $(\mathcal{P}_\lambda, \succeq)$ have been studied in \cite{CFS12}. 
It is important to notice 
that there exists, up to $S_2$-symmetry, a unique maximal element in the poset,  $(\lambda^{\max, 1}, \lambda^{\max,2})$. This is obtained by splitting $\lambda$ into two weights which differ by an alternating sum of fundamental weights only.

For the connection to Schur positivity, we briefly recall Schur polynomials and characters:\\
For $\lambda \in P^+$, we denote $s_\lambda = \operatorname{char } V(\lambda)$, which is equal to the Schur polynomial of the partition $\lambda$. Relevant for our purpose might be 
that for $(\lambda_1, \lambda_2) \in \mathcal{P}_{\lambda}$
$$\operatorname{char}_{\lie{sl}_n} V(\lambda_1)_{a_1} \ast V(\lambda_2)_{a_2} = s_{\lambda_1} s_{\lambda_2}.
$$
The Schur polynomial form a basis of the ring of symmetric polynomials in $n$ variables (modulo the relation $x_1 \ldots x_n -1$) and we call a symmetric polynomial \textit{Schur positive} if it is a non-negative linear combination of Schur polynomials. The interesting implication for representation theory is that any Schur positive symmetric polynomial is the character of a finite-dimensional $\lie{sl}_n$-module. \\

The following conjecture has been stated in this generality for the first time in \cite{DP07} and was later rediscovered in \cite{CFS12}. 
\begin{conj}\label{conj-schur}
Let $\lambda \in P^+$ and suppose $(\lambda_1, \lambda_2) \succeq (\mu_1, \mu_2)$ in $(\mathcal{P}_\lambda, \succeq)$, then $s_{\lambda_1} s_{\lambda_2} - s_{\mu_1} s_{\mu_2}$ is Schur positive.
\end{conj}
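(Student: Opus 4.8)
The plan is to derive Conjecture~\ref{conj-schur} from Conjecture~\ref{conj-fusion}, which I may assume: this is exactly the implication from the conjectural defining relations of the fusion product to Schur positivity announced in the introduction, and it is the route opened up by the preceding setup. The starting point is that for any splitting $(\nu_1,\nu_2)$ of $\lambda$ one has $s_{\nu_1}s_{\nu_2}=\operatorname{char}_{\lie{sl}_n}\bigl(V(\nu_1)_{a_1}\ast V(\nu_2)_{a_2}\bigr)$, and, by Conjecture~\ref{conj-fusion}, $V(\nu_1)_{a_1}\ast V(\nu_2)_{a_2}\cong U(\lie g\otimes\bc[t])/I(\nu_1,\nu_2)$ as $\lie g\otimes\bc[t]$-modules, in particular as $\lie g$-modules. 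Hence $s_{\lambda_1}s_{\lambda_2}-s_{\mu_1}s_{\mu_2}$ is the difference of the $\lie g$-characters of $U(\lie g\otimes\bc[t])/I(\lambda_1,\lambda_2)$ and $U(\lie g\otimes\bc[t])/I(\mu_1,\mu_2)$, and it is enough to produce a $\lie g$-equivariant surjection from the former onto the latter.

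First I would compare the two left ideals. Because $\lambda_1+\lambda_2=\mu_1+\mu_2=\lambda$, every generator of $I(\lambda_1,\lambda_2)$ listed in Section~\ref{sec-setup} is also a generator of $I(\mu_1,\mu_2)$, with the single exception of the elements $(f_\alpha\otimes t)^{\min\{\lambda_1(h_\alpha),\lambda_2(h_\alpha)\}+1}$ --- the remaining generators depend only on $\lambda$, not on the splitting. Now the relation $(\lambda_1,\lambda_2)\succeq(\mu_1,\mu_2)$, read through its reformulation in terms of the numbers $\min\{\nu_1(h_\alpha),\nu_2(h_\alpha)\}$, says exactly that $\min\{\lambda_1(h_\alpha),\lambda_2(h_\alpha)\}\ge\min\{\mu_1(h_\alpha),\mu_2(h_\alpha)\}$ for every $\alpha\in R^+$; consequently $(f_\alpha\otimes t)^{\min\{\lambda_1(h_\alpha),\lambda_2(h_\alpha)\}+1}$ is a left multiple of $(f_\alpha\otimes t)^{\min\{\mu_1(h_\alpha),\mu_2(h_\alpha)\}+1}$ and therefore lies in $I(\mu_1,\mu_2)$. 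So $I(\lambda_1,\lambda_2)\subseteq I(\mu_1,\mu_2)$, and the natural quotient map
\[
U(\lie g\otimes\bc[t])/I(\lambda_1,\lambda_2)\;\twoheadrightarrow\;U(\lie g\otimes\bc[t])/I(\mu_1,\mu_2)
\]
is a surjection of $\lie g\otimes\bc[t]$-modules.

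Applying Conjecture~\ref{conj-fusion} to both sides turns this into a surjection $V(\lambda_1)\otimes V(\lambda_2)\twoheadrightarrow V(\mu_1)\otimes V(\mu_2)$ of finite-dimensional $\lie{sl}_n$-modules. Its kernel $N$ is a finite-dimensional module with $\operatorname{char}N=s_{\lambda_1}s_{\lambda_2}-s_{\mu_1}s_{\mu_2}$; since $\lie{sl}_n$ is semisimple, $N$ is a direct sum of simple modules and $\operatorname{char}N$ is a non-negative integral combination of Schur polynomials. This is precisely the assertion that $s_{\lambda_1}s_{\lambda_2}-s_{\mu_1}s_{\mu_2}$ is Schur positive, equivalently $V(\mu_1)\otimes V(\mu_2)\hookrightarrow V(\lambda_1)\otimes V(\lambda_2)$.

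The hard part is not this reduction --- which is entirely soft --- but the input on which it rests. Conjecture~\ref{conj-fusion} is exactly what the present paper sets out to prove, and via Theorem~\ref{thm-main2} and Conjecture~\ref{conj-flat} it is further reduced to the existence of an appropriate finite Gr\"obner basis for $I_a(\lambda_1,\lambda_2)$, which at present is only secured for $\lie{sl}_2$ --- the very case in which Conjecture~\ref{conj-schur} is already elementary, following from Clebsch--Gordan. The only genuine subtlety within the reduction itself is that one must use the reformulation of $\succeq$ via the minima, not its original definition, since it is that form which matches term by term the exponents $\min\{\lambda_1(h_\alpha),\lambda_2(h_\alpha)\}+1$ in the generators of $I(\lambda_1,\lambda_2)$; this is presumably why that equivalent description is recorded immediately after the poset is introduced.
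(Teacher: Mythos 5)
Your proposal is correct and follows essentially the same route as the paper's own argument for this implication inside Theorem~\ref{thm-main}: assume Conjecture~\ref{conj-fusion}, use the reformulation of $\succeq$ via the minima $\min\{\nu_1(h_\alpha),\nu_2(h_\alpha)\}$ to obtain the ideal inclusion $I(\lambda_1,\lambda_2)\subseteq I(\mu_1,\mu_2)$ and hence a surjection of $\lie{g}$-modules, and conclude by semisimplicity; your write-up merely spells out the final semisimplicity step, which the paper leaves implicit. One small remark worth recording: you state the min-inequality as $\min\{\lambda_1(h_\alpha),\lambda_2(h_\alpha)\}\geq\min\{\mu_1(h_\alpha),\mu_2(h_\alpha)\}$, which is the direction needed for $(f_\alpha\otimes t)^{\min\{\lambda_1(h_\alpha),\lambda_2(h_\alpha)\}+1}$ to be a left multiple of the corresponding generator of $I(\mu_1,\mu_2)$ and which matches the $\lie{sl}_2$ description in the introduction; the displayed reformulation of $\succeq$ in Section~4 and the short sketch inside Theorem~\ref{thm-main} carry the opposite inequality, a sign slip that your argument silently corrects.
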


Some remarks on this conjecture and what is known so far
\begin{rem}
\begin{enumerate}
    \item The conjecture is a generalization (in some direction) of the row shuffle conjecture, e.g. for the unique maximal element $(\lambda^{max,1}, \lambda^{max,2})$ in $(\mathcal{P}_\lambda, \succeq)$ one has that
    $$
    s_{\lambda^{max,1}} \;  s_{\lambda^{max,2}} - s_{\mu_1} s_{\mu_2}
    $$
    is Schur positive. This has been conjectured by \cite{FFLP05} and proved by \cite{LPP07}.
    \item In \cite{DP07} it has been shown that the support (in terms of the basis of Schur polynomials) of $s_{\mu_1} s_{\mu_2}$ is contained in the support of $s_{\lambda_1} s_{\lambda_2}$ whenever $(\lambda_1, \lambda_2) \succeq (\mu_1, \mu_2)$ in $\mathcal{P}_{\lambda}$.
    \item The conjecture would imply that if $(\lambda_1, \lambda_2) \succeq (\mu_1, \mu_2)$ in $\mathcal{P}_\lambda$, then there is a surjective map of $\lie{sl}_n$-modules $V(\lambda_1) \otimes V(\lambda_2) \twoheadrightarrow V(\mu_1) \otimes V(\mu_2)$.
    \item It has been shown in \cite{CFS12} that  $$ \dim V(\lambda_1) \otimes V(\lambda_2) \geq \dim V(\mu_1) \otimes V(\mu_2),$$
    if $(\lambda_1, \lambda_2) \succeq (\mu_1, \mu_2)$ in $(\mathcal{P}_\lambda, \succeq)$.
    \item The conjecture has been proved for $\lie{sl}_3$ in \cite{CFS12} using combinatorics of crystal graphs.
\end{enumerate}
\end{rem}


\subsection{Conjecture on truncated local Weyl modules}\label{sec-weyl}
We briefly recall local Weyl modules for generalized current algebras and what is conjectured for truncated current algebras.
\bigskip

Let $A$ be quotient of the polynomial ring in finitely many variables by an homogeneous ideal and we denote $A_+$ the positively graded part of $A$. One can define then analogously to $A= \bc[t]$ the current algebra 
\[
\lie{sl}_n \otimes A.
\]

\begin{defn}
Let $\lambda \in P^+$, then the \textit{local Weyl module in the origin} $W_0(\lambda)$ is the $\lie{sl}_n \otimes A$-module generated by $w_\lambda$ with defining relations
\[
\lie{n}^+ \otimes A.w_\lambda = 0 ; \; \lie{h} \otimes A_+. w_\lambda = 0; \; h - \lambda(h).w_\lambda = 0; \; f_\alpha^{\lambda(h_\alpha) + 1}.w_\lambda = 0.
\]
for all $h \in \lie{h}$ and $\alpha \in R^+$.
\end{defn}
In fact these local Weyl modules can be defined way more general for any finitely generated, commutative, unital algebra and any maximal ideal in $\mathbb{A}_\lambda$ (see \cite{CFK10} for details), but we restrict ourselves to the unique graded local Weyl module. Several remarks should be made:
\begin{rem}
\begin{enumerate}
\item Local Weyl modules have been originally defined in \cite{CP01} for $A= \bc[t]$ and $A= \bc[t^{\pm}]$, then generalized in \cite{FL04} and \cite{CFK10}.
\item Let $A = \bc[t]$ or $\bc[t^{\pm}]$ and $\lambda = \sum m_i \omega_i$ then \[ W_0(\lambda) \cong_{\lie{sl}_n} V(\omega_1)^{\otimes m_1} \otimes \ldots \otimes V(\omega_{n-1})^{\otimes m_{n-1}},\] a result due to \cite{CL06} and later to \cite{FoL07}.
\item Beyond the two cases, nothing general is known about the $\lie{sl_n}$-structure of local Weyl modules. The formulas for the polynomial ring in three or more variables are already way more complicated even for $\lie{sl}_2$.
\end{enumerate}
\end{rem}

Since the generalization for more than one variable is out of reach at the moment, the natural question is to look for finite-dimensional quotients of $\bc[t]$. So for $k \geq 1$, we consider $A = \bc[t]/(t^k)$ and denote the local Weyl module $W_0(\lambda,k)$. 
\begin{rem}
Let $k = 1$, then obviously $W_0(\lambda,1) \cong V(\lambda)$.
\end{rem}
So let us consider the first non-trivial case and already here, the local Weyl modules are still not understood satisfactorily. The following conjecture is due to V.~Chari and appeared in \cite{Fou15}:

\begin{conj}\label{conj-weyl}
Let $\lambda \in P^+$
and let $(\lambda^{\max, 1}, \lambda^{\max,2})$ be the unique maximal element in $\mathcal{P}_{\lambda}$. Then for any $a_1 \neq a_2 \in \bc$:
\[
W_0 (\lambda, 2) \cong V(\lambda^{\max, 1})_{a_1} \ast V( \lambda^{\max,2})_{a_2}.
\]
\end{conj}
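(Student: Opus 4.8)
The plan is to deduce Conjecture~\ref{conj-weyl} from Conjecture~\ref{conj-fusion} together with a careful comparison of defining relations. First I would invoke Conjecture~\ref{conj-fusion} for the pair $(\lambda^{\max,1},\lambda^{\max,2})$, so that $V(\lambda^{\max,1})_{a_1}\ast V(\lambda^{\max,2})_{a_2}\cong U(\lie g\otimes\bc[t])/I(\lambda^{\max,1},\lambda^{\max,2})$, and observe that by Lemma~\ref{lem-gen-fus} this module is a tensor product of evaluation modules for $\lie g\otimes\bc[t]/(t^2)$; in particular it factors through $\lie g\otimes\bc[t]/(t^2)$ and carries a one-dimensional highest weight space of weight $\lambda$. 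The goal then reduces to showing that the defining relations of $W_0(\lambda,2)$ and those of $I(\lambda^{\max,1},\lambda^{\max,2})$, read inside $U(\lie g\otimes\bc[t]/(t^2))$, cut out modules of the same dimension.

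Next I would compare the two presentations relation by relation. The module $W_0(\lambda,2)$ is the quotient of $U(\lie g\otimes\bc[t]/(t^2))$ by the left ideal generated by $\lie n^+\otimes\bc[t]/(t^2)$, $\lie h\otimes t$, $h-\lambda(h)$ for $h\in\lie h$, and $f_\alpha^{\lambda(h_\alpha)+1}$ for all $\alpha\in R^+$. On the other side, $I(\lambda^{\max,1},\lambda^{\max,2})$ contains the same ``classical'' relations $\lie n^+\otimes\bc[t]$, $h_\alpha-\lambda(h_\alpha)$, $\lie h\otimes t\bc[t]$, and $f_\alpha^{\lambda(h_\alpha)+1}$, plus the extra relations $(f_\alpha\otimes t)^{\min\{\lambda^{\max,1}(h_\alpha),\lambda^{\max,2}(h_\alpha)\}+1}$ and $\lie n^-\otimes t^2\bc[t]$. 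Thus there is an obvious surjection $W_0(\lambda,2)\twoheadrightarrow U(\lie g\otimes\bc[t])/I(\lambda^{\max,1},\lambda^{\max,2})$, and the content of the claim is that it is an isomorphism, i.e.\ that the relations $(f_\alpha\otimes t)^{\min\{\lambda^{\max,1}(h_\alpha),\lambda^{\max,2}(h_\alpha)\}+1}$ are already forced in $W_0(\lambda,2)$. Here I would use the characterization of the maximal element: $\lambda^{\max,1}$ and $\lambda^{\max,2}$ differ only by an alternating sum of fundamental weights, so for every positive root $\alpha$ one has $\min\{\lambda^{\max,1}(h_\alpha),\lambda^{\max,2}(h_\alpha)\}=\lfloor\lambda(h_\alpha)/2\rfloor$; hence the conjectural extra relation is $(f_\alpha\otimes t)^{\lfloor\lambda(h_\alpha)/2\rfloor+1}$, and this is exactly the relation known (by results on local Weyl modules / the $\lie{sl}_2$ fusion rule applied along each $\lie{sl}_2$-triple, cf.~\cite{CP01,CL06}) to hold in $W_0(\lambda,2)$. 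The remaining relation $\lie n^-\otimes t^2\bc[t]$ is automatic since $W_0(\lambda,2)$ is already a module for $\lie g\otimes\bc[t]/(t^2)$.

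Finally I would close the argument by a dimension count: once the surjection $W_0(\lambda,2)\twoheadrightarrow V(\lambda^{\max,1})_{a_1}\ast V(\lambda^{\max,2})_{a_2}$ is established and the two modules are shown (via the relation comparison above, together with the known $\lie{sl}_2$-case for the leading-term count, or directly via Conjecture~\ref{conj-fusion} and Theorem~\ref{thm-main2}) to have equal dimensions, the surjection must be an isomorphism. The main obstacle I expect is precisely the middle step: proving that the truncation relation $(f_\alpha\otimes t)^{\lfloor\lambda(h_\alpha)/2\rfloor+1}.w_\lambda=0$ genuinely holds in $W_0(\lambda,2)$ for all positive roots simultaneously, not just root by root — this requires either a clean reduction to the $\lie{sl}_2$ fusion product $V(\lceil\lambda/2\rceil)\ast V(\lfloor\lambda/2\rfloor)$ along each long root $\lie{sl}_2$-subalgebra, or the full strength of the defining-relations conjecture via \Cref{thm-main2}. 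In the body of the paper one would therefore present this as a corollary of Conjecture~\ref{conj-fusion}, with the identification $\min\{\lambda^{\max,1}(h_\alpha),\lambda^{\max,2}(h_\alpha)\}=\lfloor\lambda(h_\alpha)/2\rfloor$ as the key elementary lemma.
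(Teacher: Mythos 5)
Your proposal follows the same route as the paper's conditional argument (the local--Weyl part of the proof of Theorem~\ref{thm-main}): assume Conjecture~\ref{conj-fusion}, obtain the surjection $W_0(\lambda,2)\twoheadrightarrow V(\lambda^{\max,1})_{a_1}\ast V(\lambda^{\max,2})_{a_2}$ from the universal property, and then show the remaining defining relation $(f_\alpha\otimes t)^{\min\{\lambda^{\max,1}(h_\alpha),\lambda^{\max,2}(h_\alpha)\}+1}.w_\lambda=0$ holds in $W_0(\lambda,2)$, which gives the reverse surjection and hence an isomorphism. The step you flag as the ``main obstacle'' --- whether the truncation relation holds for all positive roots \emph{simultaneously} rather than ``just root by root'' --- is not actually a gap: $I(\lambda^{\max,1},\lambda^{\max,2})$ is generated by these relations one positive root at a time, so verifying each one separately is exactly what is needed. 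The paper does this cleanly by the restriction you tentatively propose: for each $\alpha\in R^+$, the cyclic $\lie{sl}(\alpha)\otimes\bc[t]/(t^2)$-submodule of $W_0(\lambda,2)$ through the highest-weight line is a quotient of the $\lie{sl}_2$-Weyl module $W_0(\lambda(h_\alpha),2)$, and the $\lie{sl}_2$-result of [FF02] supplies $(f_\alpha\otimes t)^{\lfloor\lambda(h_\alpha)/2\rfloor+1}=0$ there. Your elementary observation that $\min\{\lambda^{\max,1}(h_\alpha),\lambda^{\max,2}(h_\alpha)\}=\lfloor\lambda(h_\alpha)/2\rfloor$ (since $\lambda^{\max,1}-\lambda^{\max,2}$ is an alternating sum of fundamental weights, hence pairs against every $h_\alpha$ to $0$ or $\pm1$) is the correct bridge identifying the two exponents, and is implicitly used in the paper as well. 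So your approach is correct and matches the paper; you should simply commit to the root-by-root $\lie{sl}(\alpha)$-reduction rather than hedging between it and ``the full strength of the defining-relations conjecture,'' which would be circular given that Conjecture~\ref{conj-fusion} for $\lie{sl}_2$ alone (i.e.\ [FF02]) is what the argument needs.
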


\begin{rem}
This conjecture is proved in several cases, for example for $\lie{sl}_2$ it follows from \cite{FF02}, various other cases have been proved in \cite{KL14}.
\end{rem}

\subsection{Flat degenerations and the two conjectures}

\begin{thm}\label{thm-main}
Conjecture~\ref{conj-flat} implies the conjecture on Schur positivity (Conjecture~\ref{conj-schur}) and the conjecture on local Weyl modules for truncated current algebras (Conjecture~\ref{conj-weyl}).
\end{thm}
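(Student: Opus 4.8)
The strategy is to reduce both conjectures to Conjecture~\ref{conj-fusion}. By Theorem~\ref{thm-main2}, Conjecture~\ref{conj-flat} implies Conjecture~\ref{conj-fusion}, so it suffices to argue under the assumption that
\[
V(\nu_1)_{a_1} \ast V(\nu_2)_{a_2} \;\cong\; U(\lie g \otimes \bc[t])/I(\nu_1, \nu_2)
\]
as graded $\lie g \otimes \bc[t]$-modules for all $\nu_1, \nu_2 \in P^+$ and all $a_1 \neq a_2 \in \bc$; in particular the $\lie g$-module underlying $U(\lie g \otimes \bc[t])/I(\nu_1, \nu_2)$ is $V(\nu_1) \otimes V(\nu_2)$, of character $s_{\nu_1} s_{\nu_2}$.

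\emph{Schur positivity.} Fix $\lambda \in P^+$ and a relation $(\lambda_1, \lambda_2) \succeq (\mu_1, \mu_2)$ in $(\mathcal P_\lambda, \succeq)$. The left ideals $I(\lambda_1, \lambda_2)$ and $I(\mu_1, \mu_2)$ are generated by the same elements except for the powers $(f_\alpha \otimes t)^{\min\{\lambda_1(h_\alpha), \lambda_2(h_\alpha)\} + 1}$ versus $(f_\alpha \otimes t)^{\min\{\mu_1(h_\alpha), \mu_2(h_\alpha)\} + 1}$, every remaining generator depending only on $\lambda = \lambda_1 + \lambda_2 = \mu_1 + \mu_2$. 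The relation $\succeq$ is exactly the componentwise comparison, over $\alpha \in R^+$, of the exponents $\min\{\lambda_1(h_\alpha), \lambda_2(h_\alpha)\}$ and $\min\{\mu_1(h_\alpha), \mu_2(h_\alpha)\}$ occurring here, in the direction that makes $(f_\alpha \otimes t)^{\min\{\lambda_1(h_\alpha), \lambda_2(h_\alpha)\} + 1}$ a left multiple of $(f_\alpha \otimes t)^{\min\{\mu_1(h_\alpha), \mu_2(h_\alpha)\} + 1}$; hence $I(\lambda_1, \lambda_2) \subseteq I(\mu_1, \mu_2)$. The induced quotient map $U(\lie g \otimes \bc[t])/I(\lambda_1, \lambda_2) \twoheadrightarrow U(\lie g \otimes \bc[t])/I(\mu_1, \mu_2)$ is, after restriction to $\lie g$ and identification via Conjecture~\ref{conj-fusion}, a surjection $V(\lambda_1) \otimes V(\lambda_2) \twoheadrightarrow V(\mu_1) \otimes V(\mu_2)$ of $\lie{sl}_n$-modules; as finite-dimensional $\lie{sl}_n$-modules are semisimple it splits, so $s_{\lambda_1} s_{\lambda_2} - s_{\mu_1} s_{\mu_2}$ is the character of the kernel, hence Schur positive. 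This is Conjecture~\ref{conj-schur}.

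\emph{Truncated local Weyl modules.} Viewing $W_0(\lambda, 2)$ as a $U(\lie g \otimes \bc[t])$-module through $\lie g \otimes \bc[t] \twoheadrightarrow \lie g \otimes \bc[t]/(t^2)$ and spelling out the defining relations gives $W_0(\lambda, 2) = U(\lie g \otimes \bc[t])/J$, where $J$ is the (homogeneous) left ideal generated by $\lie n^+ \otimes \bc[t]$, $\lie h \otimes t\bc[t]$, $h - \lambda(h)$ for $h \in \lie h$, $f_\alpha^{\lambda(h_\alpha)+1}$ for $\alpha \in R^+$, and $\lie g \otimes t^2\bc[t]$. A comparison of generating lists shows that every generator of $J$ lies in $I(\lambda^{\max,1}, \lambda^{\max,2})$ (noting that $\lie g \otimes t^2\bc[t]$ is covered by $\lie n^+ \otimes \bc[t]$, $\lie h \otimes t\bc[t]$ and $\lie n^- \otimes t^2\bc[t]$, that $h - \lambda(h)$ lies in the $\bc$-span of the $h_\alpha - \lambda(h_\alpha)$, and that the $f_\alpha^{\lambda(h_\alpha)+1}$ are themselves generators), so $J \subseteq I(\lambda^{\max,1}, \lambda^{\max,2})$ and therefore $W_0(\lambda, 2) \twoheadrightarrow U(\lie g \otimes \bc[t])/I(\lambda^{\max,1}, \lambda^{\max,2}) \cong V(\lambda^{\max,1})_{a_1} \ast V(\lambda^{\max,2})_{a_2}$. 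Conversely, the only generator of $I(\lambda^{\max,1}, \lambda^{\max,2})$ not manifestly in $J$ is $(f_\alpha \otimes t)^{\min\{\lambda^{\max,1}(h_\alpha), \lambda^{\max,2}(h_\alpha)\} + 1}$; since $(\lambda^{\max,1}, \lambda^{\max,2})$ splits $\lambda$ as evenly as possible, $\min\{\lambda^{\max,1}(h_\alpha), \lambda^{\max,2}(h_\alpha)\} = \lfloor \lambda(h_\alpha)/2 \rfloor$ for all $\alpha \in R^+$, so what remains is the relation
\[
(f_\alpha \otimes t)^{\lfloor \lambda(h_\alpha)/2 \rfloor + 1}. w_\lambda = 0 \quad \text{in } W_0(\lambda, 2),
\]
which should follow from the annihilation of $w_\lambda$ by $\lie n^+ \otimes \bc[t]$, by $f_\alpha^{\lambda(h_\alpha)+1}$, by $\lie h \otimes t\bc[t]$ and by $\lie g \otimes t^2\bc[t]$, combined with the Garland-type identities expressing $(e_\alpha \otimes t)^{(k)}(f_\alpha \otimes 1)^{(k)}$ in terms of the currents $h_\alpha \otimes t^j$. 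Granting this, $J = I(\lambda^{\max,1}, \lambda^{\max,2})$ and the isomorphism $W_0(\lambda, 2) \cong V(\lambda^{\max,1})_{a_1} \ast V(\lambda^{\max,2})_{a_2}$ of Conjecture~\ref{conj-weyl} follows.

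The main obstacle is this last step: establishing $(f_\alpha \otimes t)^{\lfloor \lambda(h_\alpha)/2 \rfloor + 1}.w_\lambda = 0$ in $W_0(\lambda, 2)$ for \emph{every} positive root $\alpha$. For $\alpha$ simple --- hence throughout for $\lie{sl}_2$ --- this is the classical rank-one current-algebra computation, but for non-simple roots it requires controlling the $\lie{sl}_2$-triple $(e_\alpha, f_\alpha, h_\alpha)$ against the full $\lie n^+ \otimes \bc[t]$ and the $t^2$-truncation at once; equivalently it amounts to the dimension estimate $\dim W_0(\lambda, 2) \leq \dim\bigl(V(\lambda^{\max,1}) \otimes V(\lambda^{\max,2})\bigr)$, the reverse inequality being the surjection constructed above. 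By contrast, once Conjecture~\ref{conj-fusion} is in hand, the Schur-positivity half is purely formal.
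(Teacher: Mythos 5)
Your Schur positivity argument is correct and matches the paper's: under $(\lambda_1,\lambda_2) \succeq (\mu_1,\mu_2)$ the exponents satisfy $\min\{\lambda_1(h_\alpha),\lambda_2(h_\alpha)\} \geq \min\{\mu_1(h_\alpha),\mu_2(h_\alpha)\}$, so $I(\lambda_1,\lambda_2) \subseteq I(\mu_1,\mu_2)$, giving a surjection of quotients; with Conjecture~\ref{conj-fusion} this is a surjection $V(\lambda_1)\otimes V(\lambda_2) \twoheadrightarrow V(\mu_1)\otimes V(\mu_2)$ of $\lie g$-modules, which splits by semisimplicity, and the kernel accounts for $s_{\lambda_1}s_{\lambda_2}-s_{\mu_1}s_{\mu_2}$.

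The local Weyl module half, however, has exactly the gap you flag at the end and call ``the main obstacle'': you never establish that $(f_\alpha\otimes t)^{\min\{\lambda^{\max,1}(h_\alpha),\lambda^{\max,2}(h_\alpha)\}+1}$ annihilates $w_\lambda$ in $W_0(\lambda,2)$ when $\alpha$ is a non-simple positive root, and you only gesture at Garland-type identities. The paper closes this with a short rank-one reduction that avoids Garland identities altogether. For any positive root $\alpha$, consider the $\lie{sl}(\alpha)\otimes\bc[t]/(t^2)$-submodule $V$ of $W_0(\lambda,2)$ generated by $w_\lambda$. Inside this subalgebra, $w_\lambda$ is already killed by $e_\alpha\otimes\bc[t]$, by $h_\alpha\otimes t\bc[t]$, by $h_\alpha-\lambda(h_\alpha)$, and by $f_\alpha^{\lambda(h_\alpha)+1}$, so $V$ is a quotient of $W_0(\lambda(h_\alpha),2)$, the truncated local Weyl module for the rank-one current algebra $\lie{sl}(\alpha)\otimes\bc[t]/(t^2)$. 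Since the conjecture is known for $\lie{sl}_2$ (by \cite{FF02}, or by the Gr\"obner computation in Section~\ref{sec-sl2}), the relation $(f_\alpha\otimes t)^{\lfloor\lambda(h_\alpha)/2\rfloor+1}$ kills the cyclic generator of $W_0(\lambda(h_\alpha),2)$, hence of its quotient $V$, hence $w_\lambda$ in $W_0(\lambda,2)$. With this in place, $J=I(\lambda^{\max,1},\lambda^{\max,2})$ and your isomorphism follows. The observation you were missing is that the relation in question involves only the triple $(e_\alpha,f_\alpha,h_\alpha)$ and the $t^2$-truncation, so it is enough to control the $\lie{sl}(\alpha)$-submodule generated by $w_\lambda$ rather than all of $\lie n^+\otimes\bc[t]$ at once.
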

\begin{proof}

We have seen already that the conjecture on the flat family implies the conjecture on the defining ideal for the fusion product. Suppose Conjecture~\ref{conj-flat} is true, then we have defining relations for the fusion product. First, we prove the implication of the conjecture on local Weyl modules: \\
We know by the universal property of the local Weyl module that there is a surjective map of $\lie{sl}_n \otimes \bc[t]/(t^2)$-modules
    \[
    W_0(\lambda,2) \twoheadrightarrow V(\lambda^{\max_1})_{a_1} \ast V(\lambda^{\max_2})_{a_2}.
    \]
    It remains to prove that the local Weyl module satisfies the defining relations of the fusion product. 
    This is true for $\lie{sl_2}$, as in this case the local Weyl module is explicitly computed by \cite{FF02} and its dimension coincides with the dimension of the right hand side.
    This implies that the positive root $\alpha$ of $\lie{sl}_2$, one has \[(f_\alpha \otimes t)^{\min\{ \lambda^{max_1}(h_\alpha), \, \lambda^{max_2}(h_\alpha)\}+1} = 0\] in the local Weyl module.
    Now, we consider $\lie{sl}_n$ and a positive root $\alpha$ of $\lie{sl}_n$. Let $W_0(\lambda,2)$ be the local Weyl module, then we consider the submodule within $W_0(\lambda,2)$, generated through the unique highest weight space by $\lie{sl}(\alpha) \otimes \bc[t]/(t^2)$, denote this submodule $V$. By construction, $V$ is a quotient of $W_0(\lambda(h_\alpha), 2)$, the local $\lie{sl}(\alpha) \otimes \bc[t]/(t^2)$-Weyl module and hence the relation  \[(f_\alpha \otimes t)^{\min\{ \lambda^{max_1}(h_\alpha), \, \lambda^{max_2}(h_\alpha)\}+1} = 0\] is true in the submodule and hence on the generator of $W_0(\lambda,2)$. The remaining relations of the fusion product are obviously satisfied. We conclude that $W_0(\lambda,2)$ is a quotient of  $V(\lambda^{\max_1})_{a_1} \ast V(\lambda^{\max_2})_{a_2}$ and hence they are isomorphic.\\
We turn to the implication of the conjecture on Schur positivity:\\
{\it Proof for Schur positivity}: $(\lambda_1, \lambda_2) \preceq (\mu_1, \mu_2)$, then $\lambda_1 + \lambda_2 = \mu_1 + \mu_2$ and
\[
\min \{ \lambda_1(h_\alpha), \lambda_2(h_\alpha) \} \leq \min \{\mu_1(h_\alpha), \mu_2(h_\alpha)\} \; \forall\, \alpha \in R^+.
\]
Suppose now Conjecture~\ref{conj-fusion} is true, then $I(\lambda_1, \lambda_2) \subset I(\mu_1, \mu_2)$ and there is a surjective map of $\lie{sl}_n \otimes \bc[t]$-modules
\[
V(\lambda_1)_{a_1} \ast V(\lambda_2)_{a_2} \twoheadrightarrow V(\mu_1)_{a_1} \ast V(\mu_2)_{a_2}.
\]

\end{proof}

\section{The \texorpdfstring{$\mathfrak{sl}_{2}(\bc)$}{sl2}-case}\label{sec-sl2}
In this section, we prove Conjecture~\ref{conj-flat} for $\lie g = \lie{sl}_2$. This implies of course Conjecture~\ref{conj-fusion} for $\lie{sl}_2$, which has been known before due to \cite{FF02}. We provide a new approach which might generalize to $\lie{sl}_n$. The idea is outlined in Theorem~\ref{thm-2at1}. When considering $\lie{sl}_2 \otimes \bc[t]/(t^2 - at)$, one has by Lemma~\ref{lem-grobner-bi} a Gr\"obner basis for the ideal generated by $\lie{sl}_2 \otimes (t^2 - at)$, since $\bc[t]$ is a principal ideal domain and hence $\{t^2 - at\}$ is a Gr\"obner basis for $(t^2  -at)$. By Theorem~\ref{thm-2at1} we are left to show that there is a Gr\"obner basis for the $U(\lie{sl}_2 \otimes \bc[t]/(t^2 - at))$-ideal $I_a(\lambda, \mu)$, cf. \Cref{lem-gen-fus}. The goal of this section is therefore the proof of Theorem~\ref{thm-grobner-left}, where such a Gr\"obner basis will be established explicitly.

\subsection{Commutator relations in tensor products}
We fix non-negative integers $\l\geq \m$ and we set for $x \in\mathfrak{sl}_2$:
\[
x_0 := x \otimes 1
\ ,
\quad 
x_1 := x \otimes t
\quad\text{in }\mathfrak{sl}_2\otimes\mathbb{C}[t]
\ ,
\]
so in particular, we have the elements $h_0,h_1,f_0,f_1,e_0,e_1\in\mathfrak{sl}_2\otimes\mathbb{C}[t]$.

In the following, we will prove commutation relations involving such elements and additional elements which will be denote by $F_i$, which will play a crucial role in the Gr\"obner basis we will describe.

\begin{defn} For all $0\leq j\leq i\leq \m+1$, we define
$$
m_i := \l+\m+1-i
\ ,\quad
c_{ji} :=\binom{m_i}{j} \binom{\m - j}{\m - i}
= \binom{m_i}{j} \binom{\m - j}{i - j}
  \ ,\quad
F_i
:= \sum_{k=0}^{i} c_{ki} (-a)^{i-k} f_1^{k} f_0^{m_i - k }
$$
(note that $c_{ki}=0$ as soon as $m_i<k$, so no negative powers of $f_0$ are occurring) and for all $0\leq i\leq \m$,
$$
p_i := -2 m_i (i+1) / (m_i-i-1) 
,\quad
q_i := -2 m_i (\m-i) / (m_i-i-1)  
\ .
$$
unless $\l=\m=i$, in which case we define $p_\m:=0$ and $q_\m:=0$.
\end{defn}

We record that $F_{\m+1} = \binom{\l}{\m+1} f_1^{\m+1} f_0^{\l-\m-1}$ is a monomial multiple of $f_1^{\m+1}$, which is equal to zero if $\l=\m$. On the other end of the spectrum we have another monomial $F_0 = f_0^{\l+\m+1}$.

\begin{lem} For all $0\leq i\leq\m$,
\begin{equation}\label{eq-hFi}
[h_0, F_i] = -2 m_i F_i
\ ,\quad
[h_1, F_i] = p_i f_0 F_{i+1} + q_i a F_i
\ ,
\end{equation}
\begin{equation}\label{eq-hfmu}
[h_0, f_1^{\m+1}] = -2(\m+1) f_1^{\m+1}
\ ,\quad\text{and}\quad
[h_1, f_1^{\m+1}] = -2a(\m+1) f_1^{\m+1}
\ .
\end{equation}
\end{lem}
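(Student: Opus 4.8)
The plan is to prove the two displays in (\ref{eq-hFi}) and the two in (\ref{eq-hfmu}) by direct, but carefully organized, computation in $U(\lie{sl}_2\otimes\bc[t]/(t^2-at))$, using the ambient commutator relations $[h_0,f_0]=-2f_0$, $[h_0,f_1]=-2f_1$, $[h_1,f_0]=-2f_1$, and $[h_1,f_1]=-2a f_1$ (the last coming from $f\otimes t^2 = a\, f\otimes t$). The key mechanism is that $h_0$ and $h_1$ act on the $f$-subalgebra as derivations: since $[h_r,-]$ is a derivation, it suffices to know how $h_r$ acts on the generators $f_0,f_1$ and then extend by the Leibniz rule.

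First I would dispose of (\ref{eq-hfmu}), which is the base of everything: $[h_0,f_1^{\m+1}] = (\m+1)f_1^{\m}[h_0,f_1] = -2(\m+1)f_1^{\m+1}$ by Leibniz and $[h_0,f_1]=-2f_1$, and identically $[h_1,f_1^{\m+1}] = -2a(\m+1)f_1^{\m+1}$ from $[h_1,f_1]=-2af_1$. Note this is also the $\l=\m$ (hence $i=\m$, $F_{\m+1}$ a scalar times $f_1^{\m+1}$, $p_\m=q_\m=0$) degenerate instance of (\ref{eq-hFi}) read through the identity $F_{\m+1}=\binom{\l}{\m+1}f_1^{\m+1}f_0^{\l-\m-1}$ recorded just before the lemma; I would remark on this to justify the ``unless $\l=\m=i$'' clause.

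For the first relation in (\ref{eq-hFi}): $F_i = \sum_{k=0}^i c_{ki}(-a)^{i-k} f_1^k f_0^{m_i-k}$ is homogeneous for the $h_0$-weight grading, every monomial $f_1^k f_0^{m_i-k}$ having $h_0$-weight $-2(k + (m_i-k)) = -2m_i$, so $[h_0,F_i] = -2m_i F_i$ is immediate from Leibniz. The substantive computation is $[h_1,F_i]$. Here $[h_1, f_1^k f_0^{m_i-k}]$: applying Leibniz, the $f_1$ factors each contribute $-2a f_1$ (total $-2ak\, f_1^k f_0^{m_i-k}$), while each $f_0$ contributes $[h_1,f_0]=-2f_1$, i.e. replaces an $f_0$ by $f_1$ (total $-2(m_i-k)\, f_1^{k+1}f_0^{m_i-k-1}$, after commuting $f_1$ past the remaining $f_0$'s, which is free since $f_0,f_1$ commute). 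Summing against $c_{ki}(-a)^{i-k}$, one gets $[h_1,F_i]$ as a combination of terms $f_1^k f_0^{m_i-k}$ and $f_1^{k+1}f_0^{m_i-k-1}$; re-indexing and noting $m_{i+1}=m_i-1$, the claim $[h_1,F_i]=p_i f_0 F_{i+1} + q_i a F_i$ becomes a pair of binomial-coefficient identities relating $c_{ki}$, $c_{k,i+1}$, $m_i$, $p_i$, $q_i$. Concretely, matching the coefficient of $f_1^{k+1}f_0^{m_i-k-1}$ on both sides yields an identity of the shape $-2(m_i-k)c_{ki} + (-2a\cdot\text{adj.})\cdots = p_i\, c_{k+1,i+1} + q_i a\, c_{k+1,i}$, which I would verify by expanding $c_{ji}=\binom{m_i}{j}\binom{\m-j}{i-j}$ and using Pascal-type identities together with the explicit formulas $p_i=-2m_i(i+1)/(m_i-i-1)$, $q_i=-2m_i(\m-i)/(m_i-i-1)$.

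The main obstacle is precisely this last bookkeeping: the coefficient-matching produces two families of rational-function identities in $\l,\m,i$ (one family from the ``$f_0\to f_1$'' terms, one from the ``$f_1$-weight'' terms, which partly overlap after re-indexing), and getting the denominators $m_i-i-1$ to cancel correctly — and confirming that the degenerate case $m_i=i+1$, i.e. $\l=\m=i$, is exactly where the formulas would blow up and must be excluded — requires care. I expect no conceptual difficulty, only the need to choose the right normalization of $c_{ji}$ (the two binomial forms given, $\binom{m_i}{j}\binom{\m-j}{\m-i}=\binom{m_i}{j}\binom{\m-j}{i-j}$, are recorded precisely to make one or the other side of each identity transparent) and to track the edge terms $k=0$ and $k=i$ where some $c$'s vanish. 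Once these identities are checked, (\ref{eq-hFi}) and (\ref{eq-hfmu}) follow, completing the lemma.
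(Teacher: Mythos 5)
Your proposal is correct and follows essentially the same route as the paper's proof: compute $[h_r, f_1^s f_0^t]$ by the Leibniz rule, read off $[h_0,F_i]=-2m_iF_i$ from weight homogeneity, and establish $[h_1,F_i]=p_i f_0 F_{i+1}+q_i a F_i$ by matching the coefficients of each $f_1^j f_0^{m_i-j}$, which reduces to the same rational identities in $\l,\m,i,j$ that the paper checks by direct simplification. One small caveat on your aside: the $f_1^{\m+1}$ commutators are not a degenerate instance of the $F_i$ commutators at $\l=\m=i$ (there $F_{\m+1}=\binom{\l}{\m+1}f_1^{\m+1}f_0^{\l-\m-1}=0$, not a nonzero multiple of $f_1^{\m+1}$), and the ``unless $\l=\m=i$'' convention for $p_\m,q_\m$ is instead justified inside the coefficient-matching itself by inspecting the boundary coefficients $C_0,C_{i+1}$ when $m_i-i-1=0$ --- this does not affect the validity of your approach.
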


\begin{proof} For all $s,t\geq 0$, we have 
\begin{equation} \label{eq-comm-h-f1sf0t}
 [h_0, f_1^s f_0^t] 
= -2(s+t) f_1^s f_0^t
\quad\text{and}\quad
[h_1, f_1^s f_0^t] 
= -2as f_1^s f_0^t - 2t f_1^{s+1} f_0^{t-1} 
\ ,
\end{equation} 
which imply directly \Cref{eq-hfmu} and the formula for $[h_0,F_i]$ in \Cref{eq-hFi}. For the remaining formula for $[h_1,F_i]$, we compute
\begin{align*}
[h_1, F_i]
&= \sum_{j=0}^i
-2 a j c_{ji} (-a)^{i-j} 
f_1^j f_0^{m_i-j}
-2 (m_i-j) c_{ji} (-a)^{i-j} 
f_1^{j+1} f_0^{m_i-(j+1)}
\ ,
\end{align*}
so the non-zero coefficients of monomials of the form $f_1^j f_0^{m_i-j}$ are
$$
 C_j := \begin{cases} 
  -2(m_i-i) c_{ii} & j=i+1 \\
 -2 a j c_{ji} (-a)^{i-j} - 2(m_i-j+1) c_{j-1,i} (-a)^{i-j+1} & 1\leq j \leq  i \end{cases}
 \ .
$$
On the other hand, the corresponding coefficients in $q_i a F_i + p_i f_0 F_{i+1}$ are
$$
 C'_j := \begin{cases} 
  p_i c_{i+1,i+1} & j=i+1 \\
  q_i a c_{ji} (-a)^{i-j} + p_i c_{j,i+1} (-a)^{i+1-j}  & 0\leq j \leq  i
 \end{cases}
 \ .
$$
Now if $\l=\m=i$, then $C_{i+1}=C'_{i+1}=C_0=C'_0=0$. Otherwise, the terms $c_{i+1,i+1}$, $c_{0,i}$, and $(m_i-i-1)$ are non-zero, and $C_{i+1}=C'_{i+1}$ and $C_0=C'_0$ if and only if
\begin{align*}
p_i = -2(m_i-i) c_{ii} / c_{i+1,i+1} = -2 m_i (i+1) / (m_i-i-1) 
\\
\text{and}\quad
q_i = p_i c_{0,i+1} / c_{0,i} = -2 m_i (\m-i) / (m_i-i-1)  
\ ,
\end{align*}
which is indeed consistent with the way we defined $p_i$ and $q_i$. For all $1\leq j\leq i$, $C_j=C'_j$ if and only if
$$
2 j c_{ji} - 2(m_i-j+1) c_{j-1,i} = - q_i c_{ji} + p_i c_{j,i+1} 
\quad\ldots
$$
If $\l=\m=i$, both sides are zero, so the identity holds trivially, otherwise the term $(m_i-i-1)$ is non-zero and we can simplify the identity:
\begin{align*}
\ldots\Leftrightarrow
0 &= \tfrac{m_i(i+1)}{m_i-i-1} c_{j,i+1}  
  - (\tfrac{m_i(\m-i)}{m_i-i-1}-j) c_{ji} 
  - (m_i-j+1) c_{j-1,i} 
\\
 &= c_{ji} \left( \tfrac{m_i(i+1)}{m_i-i-1} \tfrac{(m_i-j)(\m-i)}{m_i(i-j+1)}
 - (\tfrac{m_i(\m-i)}{m_i-i-1}-j) 
 - (m_i-j+1) \tfrac{j(\m-j+1)}{(m_i-j+1)(i-j+1)}
 \right)
 \\
 &= c_{ji} \left( \tfrac{(i+1)(m_i-j)(\m-i) }{ (m_i-i-1)(i-j+1)}
 - (\tfrac{m_i(\m-i)}{m_i-i-1}-j) 
 - \tfrac{j(\m-j+1)}{i-j+1}
 \right)
 \ .
\end{align*}
Indeed, after multiplying with the non-zero term $(m_i-i-1)(i-j+1)/c_{ji}$, the right-hand side simplifies to
\begin{align*}
 &(i+1)(m_i-j)(\m-i)
 - m_i(\m-i)(i-j+1)
 + j(i-j+1)(m_i-i-1)
 - j(\m-j+1) (m_i-i-1)
 \\
 &= 
 (i+1)(m_i-j)(\m-i)
 - m_i(\m-i)(i-j+1)
 - j(\m-i)(m_i-i-1)
 \\
 &= (\m-i) ((i+1)(m_i-j)
 - m_i(i-j+1)
 - j(m_i-i-1)
 )
 \\
 &= (\m-i) (-(i+1)j
 + m_i j
 - j(m_i-i-1)
 )
 = 0
 \ ,
\end{align*}
as desired.
\end{proof}

\newcommand\pfi{\partial_{f_1}}
\newcommand\pfo{\partial_{f_0}}

\begin{lem} For all $s,t\geq0$,
\begin{align}\label{eq-comm-e0-f1-f0}
    [e_0, f_1^s f_0^t] 
    &= s f_1^{s-1} f_0^t (h_1 - a(s-1)) + t f_1^s f_0^{t-1} (h_0 - 2s - t + 1)
\\
\label{eq-comm-e1-f1-f0}
    [e_1, f_1^s f_0^t] 
    &= a s f_1^{s-1} f_0^t (h_1 - a(s-1)) + t f_1^s f_0^{t-1} (h_1-2as) 
 - t (t-1) f_1^{s+1} f_0^{t-2}
\end{align}
\end{lem}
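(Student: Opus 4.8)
The plan is to deduce both identities from the derivation property of $\operatorname{ad}$ together with four elementary ``one-variable'' commutators. As elsewhere in this section, all computations take place in $U(\lie{sl}_2\otimes\bc[t]/(t^2-at))$, so in particular $[e_0,f_1]=[e_1,f_0]=h_1$, $[e_1,f_1]=ah_1$, $[h_1,f_1]=-2af_1$ and $[h_1,f_0]=-2f_1$ are available, the last two using $t^2\equiv at$.

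\emph{Step 1 (one-variable formulas).} From the identity $[x,y^n]=\sum_{k=0}^{n-1}y^k[x,y]y^{n-1-k}$, followed by pulling the resulting Cartan element back to the right, one obtains directly
\begin{align*}
[e_0,f_0^t] &= t f_0^{t-1}(h_0-t+1), & [e_0,f_1^s] &= s f_1^{s-1}(h_1-a(s-1)),\\
[e_1,f_1^s] &= a s\, f_1^{s-1}(h_1-a(s-1)), & [e_1,f_0^t] &= t f_0^{t-1}h_1 - t(t-1)f_1 f_0^{t-2}.
\end{align*}
The first is the classical $\lie{sl}_2$ identity; the second and third are the same computation with $[e_0,f_1]=h_1$ resp.\ $[e_1,f_1]=ah_1$, using $[h_1,f_1^j]=-2aj f_1^j$; the fourth uses $[e_1,f_0]=h_1$, $[h_1,f_0^j]=-2j f_1 f_0^{j-1}$ and $[f_0,f_1]=0$, and is where the new monomial already shows up at the second power.

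\emph{Step 2 (assembly).} For $x\in\{e_0,e_1\}$ expand $[x,f_1^s f_0^t]=[x,f_1^s]f_0^t+f_1^s[x,f_0^t]$ and substitute Step~1. In each case the Cartan term coming from $f_1^s[x,f_0^t]$ already sits at the far right, while the one coming from $[x,f_1^s]f_0^t$ must be commuted past $f_0^t$; since $[h_1,f_0^t]=-2t f_1 f_0^{t-1}$, this produces exactly one extra summand, namely $-2st\,f_1^s f_0^{t-1}$ for $x=e_0$ and $-2ast\,f_1^s f_0^{t-1}$ for $x=e_1$. Absorbing it into the term from $f_1^s[x,f_0^t]$ turns $h_0-t+1$ into $h_0-2s-t+1$, and $h_1$ into $h_1-2as$, which gives \Cref{eq-comm-e0-f1-f0} and \Cref{eq-comm-e1-f1-f0} respectively; the degenerate ranges $s=0$ and $t\in\{0,1\}$ are covered automatically under the convention that an expression with a negative exponent is zero.

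There is no conceptual difficulty here; the point to watch is the ``crossing'' relation $[h_1,f_0]=-2f_1$, which is simultaneously responsible for the shift $-2s$ in \Cref{eq-comm-e0-f1-f0} and for the genuinely new monomial $-t(t-1)f_1^{s+1}f_0^{t-2}$ in \Cref{eq-comm-e1-f1-f0}. Alternatively one can bypass Step~1 and prove both identities by a single induction on $s+t$, peeling off one factor $f_1$ or $f_0$ at a time and normalising the resulting Cartan term with the already-established \Cref{eq-comm-h-f1sf0t}; the organisation above just keeps the bookkeeping shortest.
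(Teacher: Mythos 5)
Your proof is correct, and the underlying arithmetic is the same as in the paper: both rely on the derivation property of $\operatorname{ad}$, the crossing relation $[h_1,f_0]=-2f_1$, and \Cref{eq-comm-h-f1sf0t}. The organization differs slightly: the paper peels off one $f_1$ at a time in a recursion on $s$, unrolling to a telescoping sum with $[e_j,f_0^t]$ as the base case, whereas you first isolate four clean one-variable commutators and then make a single split $[x,f_1^sf_0^t]=[x,f_1^s]f_0^t+f_1^s[x,f_0^t]$, followed by one commute-past-$f_0^t$ step. Your version is marginally more modular and localizes the only subtle point (the $-2st$ resp.\ $-2ast$ correction coming from moving $h_1$ past $f_0^t$) into one visible place, but it buys no new generality — both are the same elementary computation packaged differently.
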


\begin{proof}
Using basic identities in the current algebra $\mathfrak{sl}_2\otimes\mathbb{C}[t]$ and \Cref{eq-comm-h-f1sf0t}, we compute for all $s,t\geq 1$
\begin{align*}
    [e_0, f_0^t] 
    &= h_0 f_0^{t-1} + f_0 [e_0,f_0^{t-1}]
     = -2 (t-1) f_0^{t-1} + f_0^{t-1} h_0 + f_0 [e_0,f_0^{t-1}]
     \\
     &= \sum_{k=0}^{t-1} f_0^k (-2 (t-k-1) f_0^{t-k-1} + f_0^{t-k-1} h_0)
     = t f_0^{t-1} (h_0 - (t-1))
     \ ,\quad\text{so}
\end{align*}\begin{align*}
[e_0, f_1^s f_0^t]
 &= h_1 f_1^{s-1} f_0^t + f_1 [e_0, f_1^{s-1} f_0^t]
 \\
 &= -2a(s-1) f_1^{s-1} f_0^t -2 t f_1^s f_0^{t-1}
  + f_1^{s-1} f_0^t h_1 + f_1 [e_0, f_1^{s-1} f_0^t]
  \\
 &= f_1^{s-1} f_0^t (h_1 - 2a(s-1)) + f_1^s f_0^{t-1} (-2t) + f_1 [e_0, f_1^{s-1} f_0^t]
 \\
 &= \sum_{k=0}^{s-1} f_1^k (f_1^{s-k-1} f_0^t (h_1 - 2a(s-k-1)) + f_1^{s-k} f_0^{t-1} (-2t)) + f_1^s [e_0, f_0^t]
 \\
 &= s f_1^{s-1} f_0^t (h_1 - a(s-1)) + t f_1^s f_0^{t-1} (-2(s-1)) + t f_1^s f_0^{t-1} (h_0 - (t-1))
 \\
 &= s f_1^{s-1} f_0^t (h_1 - a(s-1)) + t f_1^s f_0^{t-1} (h_0 - 2(s-1) - (t-1))
 \ .
 \end{align*}
and it is easy to see that the equation even holds for $s,t\geq0$, where the negative powers of $f_1$ or $f_0$ do not pose a problem, since they occur with coefficient $0$.

Similarly, we obtain
\begin{align*}
    [e_1, f_0^t] 
    &= h_1 f_0^{t-1} + f_0 [e_1,f_0^{t-1}]
     = -2 (t-1) f_1 f_0^{t-2} + f_0^{t-1} h_1 + f_0 [e_1,f_0^{t-1}]
     \\
     &= \sum_{k=0}^{t-2} f_0^k (-2 (t-k-1) f_1 f_0^{t-k-2} + f_0^{t-k-1} h_1)
      + f_0^{t-1} [e_1, f_0]
      \\
     &= - t (t-1) f_1 f_0^{t-2} + t f_0^{t-1} h_1 
     \ ,\quad\text{so}
\end{align*}\begin{align*}
    [e_1, f_1^s f_0^t]
 &= a h_1 f_1^{s-1} f_0^t + f_1 [e_1, f_1^{s-1} f_0^t]
 \\
 &= -2 a^2 (s-1) f_1^{s-1} f_0^t -2 a t f_1^s f_0^{t-1}
  + a f_1^{s-1} f_0^t h_1 + f_1 [e_1, f_1^{s-1} f_0^t]
  \\
 &= a (f_1^{s-1} f_0^t (h_1 - 2a(s-1)) + f_1^s f_0^{t-1} (-2t)) + f_1 [e_1, f_1^{s-1} f_0^t]
 \\
 &= \sum_{k=0}^{s-1} a f_1^k (f_1^{s-k-1} f_0^t (h_1 - 2a(s-k-1)) 
  + f_1^{s-k} f_0^{t-1} (-2t))
 + f_1^s [e_1, f_0^t]
 \\
 &= a s f_1^{s-1} f_0^t (h_1 - a(s-1)) + t f_1^s f_0^{t-1} (h_1-2as) 
 - t (t-1) f_1^{s+1} f_0^{t-2}
 \ .
 \end{align*}
 \end{proof}

\begin{lem} For all $0\leq i\leq\m$,
\begin{align}\label{eq-e0Fi}
[e_0, F_i] 
&=  (\d_{f_0} F_i) (h_0-(\l+\m)) + (\d_{f_1} F_i) (h_1-\m a)
\ ,
\\
\label{eq-e1Fi}
[e_1, F_i] 
&=  (\d_{f_0} F_i + a \d_{f_1} F_i) (h_1-\m a) + (i+1) (m_i-1)F_{i+1}
\ ,
\end{align}
\begin{equation}
[e_0, f_1^{\m+1}] 
= (\m+1) f_1^\m (h_1-\m a)
\ ,\quad\text{and}\quad
[e_1, f_1^{\m+1}] 
= a (\m+1) f_1^\m (h_1-\m a)
\ .
\end{equation}
\end{lem}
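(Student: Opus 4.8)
The plan is to prove all four identities by a single, if somewhat lengthy, direct computation: expand $F_i$ (respectively $f_1^{\m+1}$) into monomials and apply the commutator formulas \eqref{eq-comm-e0-f1-f0} and \eqref{eq-comm-e1-f1-f0} term by term. Since $f_0$ and $f_1$ commute in $U(\lie{sl}_2\otimes\bc[t]/(t^2-at))$, the subalgebra they generate is an ordinary polynomial ring, so the formal partial derivatives $\d_{f_0}F_i$ and $\d_{f_1}F_i$ are well defined, and the whole point of the computation is to recognise the outputs of \eqref{eq-comm-e0-f1-f0}, \eqref{eq-comm-e1-f1-f0} as $\d_{f_0}F_i$ and $\d_{f_1}F_i$ (each followed by a Cartan element) plus, in the $e_1$-case, a multiple of $F_{i+1}$. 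The two scalar identities, for $[e_0,f_1^{\m+1}]$ and $[e_1,f_1^{\m+1}]$, are nothing but the specialisation $s=\m+1$, $t=0$ of \eqref{eq-comm-e0-f1-f0} and \eqref{eq-comm-e1-f1-f0} (every remaining term carries a factor $t$), so these require no further work.

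For \eqref{eq-e0Fi} I would apply \eqref{eq-comm-e0-f1-f0} to the summand $c_{ki}(-a)^{i-k}f_1^kf_0^{m_i-k}$ of $F_i$, which produces a term with Cartan tail $h_1-a(k-1)$ and one with tail $h_0-(m_i+k-1)$; since $\l+\m=m_i+i-1$, these differ from the $k$-independent tails $h_1-\m a$, $h_0-(\l+\m)$ asked for. The $h_1$- and $h_0$-parts of the tails assemble, with no re-indexing, exactly into $(\d_{f_1}F_i)h_1$ and $(\d_{f_0}F_i)h_0$; what remains is the scalar discrepancy, and after shifting $k\mapsto k-1$ in one of the two resulting sums the coefficient of a fixed monomial $f_1^jf_0^{m_i-1-j}$ in $[e_0,F_i]-(\d_{f_0}F_i)(h_0-(\l+\m))-(\d_{f_1}F_i)(h_1-\m a)$ equals, up to the common factor $(-a)^{i-j}$,
\[
c_{ji}\,(m_i-j)(i-j)\;-\;c_{j+1,i}\,(j+1)(\m-j).
\]
Writing $c_{ki}=\binom{m_i}{k}\binom{\m-k}{i-k}$ and invoking the absorption identity $\binom{n}{k+1}(k+1)=\binom{n}{k}(n-k)$ once per binomial factor, both products reduce to $\binom{m_i}{j+1}(j+1)\cdot\binom{\m-j-1}{i-j-1}(\m-j)$, so the difference vanishes and \eqref{eq-e0Fi} follows.

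For \eqref{eq-e1Fi} the same scheme applies with \eqref{eq-comm-e1-f1-f0} in place of \eqref{eq-comm-e0-f1-f0}, with one extra ingredient: the cubic term $-t(t-1)f_1^{s+1}f_0^{t-2}$. As before, the part of \eqref{eq-comm-e1-f1-f0} carrying $h_1$ assembles without re-indexing into $(\d_{f_0}F_i+a\d_{f_1}F_i)\,h_1$. The remaining contributions are the $-\m a$-correction to that tail together with the cubic term, and since the cubic term lowers the $f_0$-degree by one it lives in the same degree-$(m_i-1)$ piece as $\d_{f_0}F_i$, $\d_{f_1}F_i$ and, because $m_{i+1}=m_i-1$, as $F_{i+1}$. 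Collecting the coefficient of each $f_1^jf_0^{m_i-1-j}$ in $[e_1,F_i]-(\d_{f_0}F_i+a\d_{f_1}F_i)(h_1-\m a)$, after the relevant index shifts, yields a combination of $c_{j-1,i}$, $c_{ji}$, $c_{j+1,i}$ which collapses, again by the absorption identity (now also used with $m_{i+1}$ and with $c_{k,i+1}=\binom{m_{i+1}}{k}\binom{\m-k}{i+1-k}$), to a scalar independent of $j$ times $c_{j,i+1}$; this recombines the right-hand side as the asserted multiple of $F_{i+1}$. The degenerate case $\l=\m=i$ is painless: there $F_{i+1}=0$, so the computation closes with the $h_1$-matching alone.

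I expect the main obstacle to be purely organisational: in \eqref{eq-e1Fi} one must juggle three index-shifted sums and the cubic term at once and then pin down the single combinatorial identity that forces the coefficient of every $f_1^jf_0^{m_i-1-j}$ down to the right multiple of $c_{j,i+1}$. It helps to fix the absorption rule $\binom{n}{k+1}(k+1)=\binom{n}{k}(n-k)$ and the ratios $c_{j+1,i}/c_{ji}$, $c_{j-1,i}/c_{ji}$, $c_{j,i+1}/c_{ji}$ at the very start and to reduce all the identities to these. An alternative, which I would pursue only if the direct route becomes unwieldy, is an induction on $i$: \eqref{eq-hFi} expresses $f_0F_{i+1}$ through $[h_1,F_i]$, so one can read off $[e_0,F_{i+1}]$ and $[e_1,F_{i+1}]$ from $[e_0,F_i]$ and $[e_1,F_i]$ via the Jacobi identity (using, e.g., $[e_0,h_1]=-2e_1$), starting from the monomials $F_0=f_0^{\l+\m+1}$ and $f_1^{\m+1}$.
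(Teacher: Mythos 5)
Your proposal is correct and follows essentially the same route as the paper's proof: both apply \eqref{eq-comm-e0-f1-f0} and \eqref{eq-comm-e1-f1-f0} term by term to the monomials of $F_i$, match the coefficient of each $f_1^j f_0^{m_i-j-1}$ on the two sides, and reduce the resulting constraints to identities between the $c_{ji}$ (which the paper verifies via the ratios $c_{j+1,i}/c_{ji}$, etc., and which you phrase through the binomial absorption rule — the same computation). The inductive alternative you sketch at the end via \eqref{eq-hFi} and the Jacobi identity is plausible but is not what the paper does, nor is it needed.
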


\begin{proof} \Cref{eq-comm-e0-f1-f0} and \Cref{eq-comm-e1-f1-f0} directly imply the last two identities. \Cref{eq-comm-e0-f1-f0} also allows us to take the coefficients of $f_1^j f_0^{m_i-j-1}$ on the two sides of the  asserted commutator formula involving $e_0$ to see that it is equivalent to the identity
\begin{align*}
0 
&= c_{j+1,i} (-a)^{i-j-1} (j+1) (h_1 - aj - h_1 + a\m) 
\\ &\qquad\qquad+ c_{ji} (-a)^{i-j} (m_i-j) (h_0 - 2j - (m_i-j) + 1 - h_0 + (\l+\m) ) 
\\
 &= (-a)^{i-j} (-c_{j+1,i} (j+1) (\m-j) + c_{ji} (m_i-j) (i-j) )
 \\
 &= \begin{cases} 0 & j=\m \\ c_{ji} (-a)^{i-j} (-\tfrac{ (i-j)(m_i-j) }{ (j+1)(\m-j) } (j+1) (\m-j) + (m_i-j) (i-j) ) & \text{else} \end{cases}
\end{align*}
for all $0\leq j\leq i$, which is true in any case.

Similarly, again considering the coefficients of $f_1^j f_0^{m_i-j-1}$ and now using \Cref{eq-comm-e1-f1-f0}, the asserted commutator formula involving $e_1$ is equivalent to the identity
\begin{align}
0 &= c_{j+1,i} (-a)^{i-j-1} a (j+1) (h_1-aj) + c_{ji} (-a)^{i-j} (m_i-j) (h_1-2aj)
\notag\\
 &\qquad\qquad- c_{j-1,i} (-a)^{i-j+1} (m_i-j+1)(m_i-j)
  - c_{ji} (-a)^{i-j} (m_i-j) (h_1-a\m) 
  \notag\\
  &\qquad\qquad - c_{j+1,i} (-a)^{i-j-1} a (j+1) (h_1 - a\m) + (i+1) (m_i-1) c_{j,i+1} (-a)^{i-j+1}
  \quad\ldots
  \label{eq-pf-lemma-commutator-e}
\end{align}
which is true if $\m=j$, since then $\m=i$ and
\begin{align*}
\ldots 
&= -c_{\m+1,\m} (\m+1) (h_1-a\m) + c_{\m,\m} (m_\m-\m) (h_1-2a\m)
  - c_{\m-1,\m} (-a) (m_\m-\m+1)(m_\m-\m)
  \\
 &\qquad\qquad
  - c_{\m,\m} (m_\m-\m) (h_1-a\m) 
  + c_{\m+1,\m} (\m+1) (h_1 - a\m)
\\
&=  -a c_{\m,\m} (m_\m-\m) (\m 
- \tfrac{\m}{(m_\m-\m+1)} (m_\m-\m+1)
)
\ ,
\end{align*}
which is true. Otherwise, the terms $(\m-j)$ and $(m_i-j+1)$ are non-zero and we can simplify \Cref{eq-pf-lemma-commutator-e}
\begin{align*}
 \ldots &= c_{ji} (-a)^{i-j} (
  -\tfrac{(i-j)(m_i-j)}{(j+1)(\m-j)} a(j+1)(\m-j)
  + (m_i-j)a (\m-2j)
  \\
  &\qquad\qquad + a \tfrac{j(\m-j+1)}{(i-j+1)(m_i-j+1)} (m_i-j+1)(m_i-j)
  + (i+1) (m_i-1) (-a) \tfrac{ (m_i-j)(\m-i)  }{ (m_i-1)(i-j+1) }
  ) 
  \\
  &= c_{ji} (-a)^{i-j+1} (m_i-j) (
  (i-j)
   - (\m-2j)
  -  \tfrac{j(\m-j+1)}{i-j+1} 
  + (i+1) \tfrac{ \m-i  }{ i-j+1 }
  )
  \\
  &= c_{ji} (-a)^{i-j+1} (m_i-j) (\m-i) (
  -1
  -  \tfrac{j}{i-j+1} 
  + (i+1) \tfrac{ 1  }{ i-j+1 }
  )\ ,
\end{align*}
which is true, too.
\end{proof}



\subsection{Gr\"obner basis for \texorpdfstring{$I_a(\lambda, \mu)$}{the defining ideal}}
In order to to prove the Gr\"obner basis property
of the ideal $I_a(\lambda, \mu)$ 
defined in \Cref{lem-gen-fus},
we are left with understanding the interplay of the $F_i$ in the commutative ring $\bc(a)[f_0, f_1]$. From now on let $\succ$ 
be a monomial well-ordering 
on both $\{f_0, f_1\}$ and 
$\{e_0, e_1, h_0, h_1, f_0, f_1\}$
satisfying $f_1 \succ f_0$. 

\begin{prop}
\label{FdonotFactor}
Suppose that for $1 \leq i \leq \mu$ there is a factorization $F_i = G_1 G_2$ in $\bc(a)[f_0,f_1]$ with $G_1, G_2$ both not being constant, then $G_1,G_2 \notin I_a(\lambda, \mu)$.
\end{prop}
\begin{proof}
Suppose one has a non-trivial factorization, say $F_i  = G_1 G_2$. Since $F_i$ is homogeneous of degree $\l+\m+1-i$, $G_1, G_2$ are both homogeneous and of the form
\[
\sum_{k =0 }^p c_k f_1^k f_0^{\ell - k},
\]
for some $0 \leq p \leq i$, $p \leq \ell < \lambda + \mu +1 - i$ and some coefficients $c_k$. 
It is enough to show that the set 
\[
\{ f_1^k f_0^{\ell - k}. (v_\l \otimes v_\m) \in V(\l)_{0} \otimes V(\m)_{a} \mid 0 \leq k \leq p\}
\]
is linearly independent. One has 
\[
f_1^k f_0^{\ell - k}. v_\l \otimes v_\m 
= a^k f_0^{\ell- k}(v_\l \otimes f_0^k v_\m).
\]
Hence it remains to show that the set 
\[
\{ f_0^{\ell - k}(v_\l \otimes f_0^k v_\m) \mid 0 \leq k \leq p\}
\]
with $p \leq i \leq \mu$ and $p \leq \ell < \lambda + \mu +1 - i$ is linearly independent. The weight space $(V(\l) \otimes V(\m))_{\l+\m - 2\ell}$ has in fact dimension $\lambda + \mu + 1 - \ell > i \geq p$. Linear independence follows from that the fact the columns of the $ (\lambda + \mu + 1  - \ell) \times (p+1)$-matrix
\[
A = \left( \frac{(\ell - (r-1))!}{(\ell - (r-1) - (s-1))!}\right)_{r,s}
\]
are linearly independent. In fact, for $ p = \lambda + \mu - \ell$, the determinant of the square matrix equals $\prod_{q=0}^p q!$, up to a sign. This implies that the set is linearly independent and hence neither $G_1$ nor $G_2$ are in $I_a(\lambda,\mu)$.
\end{proof}

We remark, that by the similar reasoning the monomial element $F_{\mu+1}$ factorizes, and therefore will be replaced by $f_1^{\mu+1}$ in the ideal $I_a(\lambda,\mu)$.

\newcommand{\spoly}{\operatorname{spoly}}

Let us denote the s-polynomial of two polynomials  $f,g$ by $\spoly(f,g)$. 


\begin{prop}
\label{theFs}
Over a polynomial ring $R:=\bc(a)[f_0,f_1]$, the set
$\mathcal{F}:=\left\{ F_i(\lambda, \mu): 0 \leq i \leq \mu+1 \right\}$ is a Gr\"obner basis 
with respect to $\succ$ of the ideal it generates. 
\end{prop}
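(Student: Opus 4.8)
The plan is to verify Buchberger's criterion for $\mathcal{F}$ directly in the polynomial ring $R=\bc(a)[f_0,f_1]$, exploiting the rigid ``staircase'' shape of the leading monomials. First I would compute these leading monomials: each $F_i$ is homogeneous of degree $m_i=\l+\m+1-i$, so comparing two of its monomials $f_1^{k}f_0^{m_i-k}$ and $f_1^{k'}f_0^{m_i-k'}$ with $k>k'$ reduces, after cancelling the common factor $f_1^{k'}f_0^{m_i-k}$, to comparing $f_1^{k-k'}$ with $f_0^{k-k'}$; since $f_1\succ f_0$ and $\succ$ is admissible, the monomial carrying the larger power of $f_1$ is the larger. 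As $c_{ii}=\binom{m_i}{i}\neq 0$ for $0\le i\le\m$ (because $i\le\m\le\l$ forces $m_i\ge i$), this gives $\LM(F_i)=f_1^{i}f_0^{\l+\m+1-2i}$ for $0\le i\le\m$, and $\LM(F_{\m+1})=f_1^{\m+1}f_0^{\l-\m-1}$ when $\l>\m$ (while $F_{\m+1}=0$ when $\l=\m$ and may be discarded). In particular, along $i=0,1,\dots,\m+1$ the exponent of $f_1$ strictly increases and that of $f_0$ strictly decreases.

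Second, I would reduce the Buchberger test to consecutive pairs. For $i<j$ one has $\operatorname{lcm}(\LM(F_i),\LM(F_j))=f_1^{j}f_0^{\l+\m+1-2i}$, and for any $k$ with $i<k<j$ the monomial $\LM(F_k)$ divides it; hence by Buchberger's chain criterion it suffices to show $\spoly(F_i,F_{i+1})$ reduces to $0$ modulo $\mathcal{F}$ for every $i$, the general pair following by induction on $j-i$. For consecutive generators $\operatorname{lcm}(\LM(F_i),\LM(F_{i+1}))=f_1^{i+1}f_0^{\l+\m+1-2i}$, so up to the nonzero scalar $c_{ii}c_{i+1,i+1}$ we have $\spoly(F_i,F_{i+1})=c_{i+1,i+1}f_1F_i-c_{ii}f_0^{2}F_{i+1}$, a homogeneous polynomial of degree $m_i+1$ whose $f_1$-degree has dropped to at most $i$.

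The heart of the proof is then the explicit syzygies
\[
c_{i+1,i+1}f_1F_i-c_{ii}f_0^{2}F_{i+1}=\alpha_i\,f_0F_i+\beta_i\,F_{i-1}\qquad(1\le i\le\m),
\qquad
\spoly(F_0,F_1)=\tfrac{a\m}{\l+\m}\,f_0F_0,
\]
for suitable $\alpha_i,\beta_i\in\bc(a)$. Because $\LM(f_0F_i)$, $\LM(F_{i-1})$ and $\LM(f_0F_0)$ all lie strictly below the relevant $\operatorname{lcm}$ with respect to $\succ$, these identities present every consecutive S-polynomial as reducing to $0$ modulo $\mathcal{F}$, which completes the proof. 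To establish the syzygies one matches the coefficients of the monomials $f_1^{j}f_0^{m_i+1-j}$ ($0\le j\le i$) on both sides, turning each into a finite list of relations among the binomials $c_{ji}$, of the same hypergeometric flavour as those verified in the preceding lemmas. Some of this bookkeeping can be shortened by first eliminating $f_0^{2}F_{i+1}$ through the identity $p_if_0F_{i+1}+q_iaF_i=-2f_1(\pfo F_i+a\,\pfi F_i)$, which comes out of \Cref{eq-hFi} by expanding $[h_1,F_i]$ via \Cref{eq-comm-h-f1sf0t}, and then using Euler's relation $f_0\pfo F_i+f_1\pfi F_i=m_iF_i$; this reduces the claim to recognising $f_1(f_1-af_0)\pfi F_i$ as a combination of $f_0F_i$ and $F_{i-1}$.

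The step I expect to be the main obstacle is precisely this last one: pinning down $\alpha_i$ and $\beta_i$ and verifying the resulting binomial identities, together with the degenerate configurations $i=0$, $i=\m$ and $\l=\m$ (where $F_{\m+1}$ vanishes, $\alpha_i$ or $\beta_i$ may vanish, or the denominator $m_i-i-1$ occurring in $p_i,q_i$ must be cleared beforehand). Everything else---the leading-monomial computation and the chain-criterion reduction---is short and formal.
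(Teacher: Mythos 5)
Your plan is correct in outline and takes a genuinely different route from the paper. You reduce to consecutive pairs via the chain criterion and then aim to exhibit, by direct coefficient bookkeeping, the explicit standard representations $\spoly(F_0,F_1)=\tfrac{a\mu}{\lambda+\mu}f_0F_0$ and $c_{i+1,i+1}f_1F_i - c_{ii}f_0^2F_{i+1}=\alpha_i f_0F_i+\beta_iF_{i-1}$; these formulas are in fact correct (I checked several small cases), and the shortcut via $[h_1,F_i]=-2f_1(\pfo F_i+a\pfi F_i)$ together with Euler's relation is the right way to reduce the verification to one binomial identity for $f_1(f_1-af_0)\pfi F_i$. The paper avoids all of this coefficient matching: it writes each $F_k$ as $f_0^{m_0-k}p_k(g)$ with $g=f_1/(-af_0)$, observes that the scaled $S$-polynomial stays in the same graded piece, and then invokes \Cref{FdonotFactor} together with a $\gcd$/B\'ezout argument in $\bc(a)[g]$ to force the $S$-polynomial to be a multiple of $F_k$ without ever computing $\alpha_i,\beta_i$. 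So the paper trades your binomial bookkeeping for the representation-theoretic input of \Cref{FdonotFactor}, and handles all $\ell\ge1$ at once rather than passing through the chain criterion. Each approach buys something: the paper's is structurally cleaner and gives a uniform reason why the reduction must succeed; yours, if carried to completion, is entirely elementary and self-contained within $\bc(a)[f_0,f_1]$, and the chain-criterion reduction to consecutive pairs is a genuine simplification the paper does not use.

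The one gap, which you flag yourself, is that the syzygy identities are conjectured rather than proven: writing $c_{i+1,i+1}f_1F_i-c_{ii}f_0^2F_{i+1}$ as a linear combination of $f_0F_i$ and $F_{i-1}$ imposes $i+1$ linear conditions on the two unknowns $\alpha_i,\beta_i$, so after solving for $\alpha_i,\beta_i$ from the two top coefficients you are left with $i-1$ binomial identities to verify. This is exactly the kind of hypergeometric computation that the paper chose to sidestep via \Cref{FdonotFactor}, so your proof is not complete until those identities (and the degenerate cases $i=0$, $i=\mu$, and $\lambda=\mu$, where $p_i,q_i$ need separate treatment and $F_{\mu+1}$ may vanish) are actually checked. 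If you do not want to grind through them, you could import the paper's argument at this single point: observe that $S:=c_{i+1,i+1}f_1F_i-c_{ii}f_0^2F_{i+1}$ lies in the graded piece of degree $m_{i-1}$ of the ideal, has $f_1$-degree $\le i$, and that $\gcd$-ing $S$ against $F_{i-1}$ (or $f_0F_i$) in the degree-zero variable $g$ produces a proper divisor of an $F_j$ lying in $I_a(\lambda,\mu)$ unless the claimed two-term representation holds, which \Cref{FdonotFactor} forbids.
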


\begin{proof}
We introduce a convenient notation first: 
\[
F_k = \sum_{j=0}^{k} c_{jk} (-a)^{k-j} f_1^{j} f_0^{m_k - j } = 
(-a)^{k} f_0^{m_0 - k} 
\sum_{j=0}^{k} c_{jk} g^j,
\]
where $g:=\tfrac{f_1}{-a f_0}$ is of degree $0$. Note that after multiplication of the right hand side we do not have fractions in $f_0$ since we assume $\lambda \geq \mu$. 

The monomial ordering, preferring $f_1$ over $f_0$,
carries over to the new notation
by ordering polynomials in $g$ (which are of total degree 0) by their exponents.
$F_k$ rewritten in $f_0, g$ is a graded polynomial of degree $m_k=m_0-k$. 
For a natural $\ell$ in the admissible range we see that 
$\spoly(F_k, F_{k+\ell})$ is graded of degree $m_0-k$: 
\[
S:=f_0^{\ell} F_{k+\ell} - \kappa g^{\ell} F_k  
= 
(-a)^{k+\ell} f_0^{m_0 - k} 
\sum_{j=0}^{k+\ell} c_{j,k+\ell} g^{j}
- \kappa 
(-a)^{k} f_0^{m_0 - k} 
\sum_{j=0}^{k} c_{jk} g^{j+\ell}
\]
where $\kappa := (-a)^{\ell} \tfrac{c_{k+\ell, k+\ell}}{c_{k,k}}$ guarantees that the leading term cancels out. If $S=0$, we are done. Otherwise there exists $0 \leq q < k+\ell$ such that
$S= f_0^{m_0 - k} p_1(g)$
for a polynomial $p_1 \in \bc[g]$.
Writing similarly $F_k$ as $f_0^{m_0 - k} p_2(g)$, let $G:=\gcd(p_1,p_2)\in\bc[g]$. Since there exist $a,b\in\bc[g]$ with $G=ap_1 + bp_2$, also $f_0^{m_0 - k}G = a S + b F_k$ belongs to the graded subspace of degree $m_0 - k$. Now $G \mid p_2$ implies $f_0^{m_0 - k} G \mid F_k$. Since by \Cref{FdonotFactor}, $F_k$ cannot have proper factors, which belong to $I_a(\l,\m)$, 
it follows that $S$ and $F_k$ differ by a constant, say $\kappa'\in\bc\setminus\{0\}$. Thus
  \[
   f_0^{\ell} F_{k+\ell} - \kappa g^{\ell} F_k = \spoly(F_k, F_{k+\ell}) =\kappa' F_k,
 \]
 and rewriting back in $f_0, f_1$-notation
  \[
  (-a)^{\ell}f_0^{2\ell} F_{k+\ell} - \kappa f_1^{\ell} F_k =
 \kappa' (-a)^{\ell}f_0^{\ell} F_k
 \]
  shows that the original $\spoly(F_k, F_{k+\ell})$  reduces to zero with $F_k$ for any $\ell\geq 1$ in the admissible range, therefore  $\mathcal{F}$ is a Gr\"obner basis.
\end{proof}

\begin{thm}\label{thm-grobner-left}
Let $\lambda\geq  \mu \in \mathbb{N}_{\geq 0}$ be dominant weights for $\mathfrak{sl}_2$ and the monomial ordering $\succ$ be chosen as in the current section. Then the following is a Gr\"obner basis with respect to $\succ$ of the left ideal $I_a( \lambda, \mu)$ defining $V_0(\lambda) \otimes V_a(\mu)$ as a $\lie{sl}_2 \otimes \bc[t]/(t^2 - at)$-module:
\[
\left\{ e_0, e_1, h_0 - (\lambda + \mu), h_1 - \mu a \right\} \cup \left\{ F_i(\lambda, \mu) \mid i = 0, \ldots, \mu \right\} \cup \{ f_1^{\mu+1} \}
.
\]
\end{thm}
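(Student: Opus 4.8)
The plan is to invoke Buchberger's criterion for the $G$-algebra $A:=U(\mathfrak{sl}_2\otimes\bc[t]/(t^2-at))$, which is of Lie type by \Cref{lem-grobner-bi}: it suffices to show that every $S$-polynomial of a pair from the candidate set
\[
\mathcal{G}:=\{e_0,\,e_1,\,h_0-(\lambda+\mu),\,h_1-\mu a\}\cup\{F_i\mid 0\le i\le\mu\}\cup\{f_1^{\mu+1}\}
\]
reduces to $0$ with respect to $\mathcal{G}$. First I would record the leading monomials for $\succ$: $\lm(e_j)=e_j$, $\lm(h_j-c)=h_j$, and, since $f_1\succ f_0$ and $c_{ii}=\binom{m_i}{i}\ne0$, $\lm(F_i)=f_1^{i}f_0^{\lambda+\mu+1-2i}$ (the $f_0$-exponent being positive as $i\le\mu\le\lambda$) and $\lm(f_1^{\mu+1})=f_1^{\mu+1}$. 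Second, I would check $\mathcal{G}\subseteq I_a(\lambda,\mu)$: for $\mathfrak{sl}_2$ the generators of \Cref{lem-gen-fus} are $e_0,e_1,h_0-(\lambda+\mu),h_1-\mu a$, $f_0^{\lambda+\mu+1}=F_0$, $f_1^{\mu+1}$ and $(f_1-af_0)^{\lambda+1}$ (using that $x\otimes(t^2-at)$ vanishes in $A$), so only $F_1,\dots,F_\mu$ remain. Passing to $A/I_a(\lambda,\mu)\cong V_0(\lambda)\otimes V_a(\mu)$ with cyclic vector $v=v_\lambda\otimes v_\mu$, one has $F_0.v=f_0^{\lambda+\mu+1}.v=0$ (the $\mathfrak{sl}_2$-highest weight vector of weight $\lambda+\mu$ is killed by the $(\lambda+\mu+1)$-st power of the lowering operator) and $e_1.v=(h_1-\mu a).v=0$; \Cref{eq-e1Fi} then gives $(i+1)(m_i-1)\,F_{i+1}.v=[e_1,F_i].v=e_1.(F_i.v)-F_i.(e_1.v)=0$ as soon as $F_i.v=0$, and $(i+1)(m_i-1)=(i+1)(\lambda+\mu-i)\ne0$ for $0\le i\le\mu-1$, so induction yields $F_i\in I_a(\lambda,\mu)$ for all $i$.

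For Buchberger's criterion I would split the critical pairs into three kinds. For a pair inside $\{e_0,e_1,h_0-(\lambda+\mu),h_1-\mu a\}$ the leading monomials $e_0,e_1,h_0,h_1$ are pairwise coprime, so by the Generalized Product Criterion (\Cref{gpc}) the $S$-polynomial reduces to the Lie bracket of the two elements, which is $0$ or one of $\pm2e_0,\pm2e_1,\pm2ae_1$ (using $e\otimes t^2=a\,e\otimes t$ in $A$), hence reduces to $0$ via $e_0$ or $e_1$. For a pair made of one element of $\{e_0,e_1,h_0-(\lambda+\mu),h_1-\mu a\}$ and one element of $\{F_0,\dots,F_\mu\}\cup\{f_1^{\mu+1}\}$, the two leading monomials involve disjoint variables and are again coprime, so \Cref{gpc} reduces the $S$-polynomial to $\pm$ the relevant commutator; these commutators are supplied by \Cref{eq-hFi}, \Cref{eq-e0Fi}, \Cref{eq-e1Fi} and the accompanying identities for $f_1^{\mu+1}$, and each of them is an explicit left $A$-linear combination of members of $\mathcal{G}$ — a scalar multiple of $F_i$ for $[h_0,F_i]$; $p_i f_0 F_{i+1}+q_i a F_i$ for $[h_1,F_i]$; $(\partial_{f_0}F_i)(h_0-(\lambda+\mu))+(\partial_{f_1}F_i)(h_1-\mu a)$ for $[e_0,F_i]$; and analogously for $e_1$ and for $f_1^{\mu+1}$ — where the top index $i=\mu$ is absorbed via $F_{\mu+1}=\binom{\lambda}{\mu+1}f_0^{\lambda-\mu-1}f_1^{\mu+1}\in\langle f_1^{\mu+1}\rangle$. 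Since in each of these identities the displayed summands have pairwise distinct leading monomials (e.g.\ $h_0$ versus $h_1$ in \Cref{eq-e0Fi}, or $\lm(f_0F_{i+1})=f_1^{i+1}f_0^{\lambda+\mu-2i}\succ\lm(F_i)$ in \Cref{eq-hFi}), no top-degree cancellation occurs, the formula is a standard representation, and the division algorithm carries the commutator to $0$. Finally, for a pair both of whose members lie in $\{F_0,\dots,F_\mu\}\cup\{f_1^{\mu+1}\}$ — all of which sit inside the commutative subalgebra $\bc[a][f_0,f_1]\subset A$ — the $S$-polynomial reduction is purely commutative: for the pairs $(F_k,F_{k+\ell})$ this is precisely \Cref{theFs} (whose proof invokes \Cref{FdonotFactor}), and for the pairs involving $f_1^{\mu+1}$ one runs the same argument, again using $f_1^{\mu+1}\mid F_{\mu+1}$.

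Once all critical pairs are disposed of, $\mathcal{G}$ is a left Gröbner basis of $\langle\mathcal{G}\rangle$, so by \Cref{rem-leading-ideal} the ideal of leading terms $LT(\langle\mathcal{G}\rangle)\subset\bc[e_0,e_1,h_0,h_1,f_0,f_1]$ is generated by $\{\lm(g)\mid g\in\mathcal{G}\}$; the standard monomials are the $f_1^b f_0^c$ with $0\le b\le\mu$ and $0\le c\le\lambda+\mu-2b$, so $\dim_{\bc}\bigl(A/\langle\mathcal{G}\rangle\bigr)=\sum_{b=0}^{\mu}(\lambda+\mu-2b+1)=(\lambda+1)(\mu+1)$. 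By \Cref{lem-gen-fus} this equals $\dim_{\bc}\bigl(V(\lambda)\otimes V(\mu)\bigr)=\dim_{\bc}\bigl(A/I_a(\lambda,\mu)\bigr)$, and together with $\langle\mathcal{G}\rangle\subseteq I_a(\lambda,\mu)$ this forces $\langle\mathcal{G}\rangle=I_a(\lambda,\mu)$; hence $\mathcal{G}$ is a Gröbner basis of $I_a(\lambda,\mu)$. The step I expect to be the main obstacle is the middle one: because the $f_j$ do not commute with the $e_j$ and $h_j$, mere membership of a commutator in $\langle\mathcal{G}\rangle$ does not force its normal form to vanish, so one must track leading monomials through the non-commutative rewriting to certify the standard representations — and this must be combined with the purely commutative but combinatorially delicate reductions that make up \Cref{theFs} and their extension to the pairs involving $f_1^{\mu+1}$.
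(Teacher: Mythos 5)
Your proof takes essentially the same route as the paper's: Buchberger's criterion in the $G$-algebra, the Generalized Product Criterion (\Cref{gpc}) to replace $S$-polynomials of coprime leading monomials by commutators, the explicit commutator identities \Cref{eq-hFi}, \Cref{eq-e0Fi}, \Cref{eq-e1Fi} to show these commutators reduce to zero, and \Cref{theFs} for the purely commutative $F_i$-pairs. Your write-up is correct and in two places actually tighter than the paper. First, the paper asserts that $\{e_0,e_1,h_0-(\lambda+\mu),h_1-\mu a,F_0,f_1^{\mu+1}\}$ generate $I_a$ ``by definition'', silently dropping the generator $(f_\alpha\otimes(t-a))^{\lambda+1}=(f_1-af_0)^{\lambda+1}$ from \Cref{lem-gen-fus}; the way you close this gap — show $\mathcal{G}\subseteq I_a$, show $\mathcal{G}$ is a Gröbner basis of $\langle\mathcal{G}\rangle$, count the $(\lambda+1)(\mu+1)$ standard monomials, and conclude $\langle\mathcal{G}\rangle=I_a$ by a dimension comparison — is exactly what is needed and is missing from the paper's version. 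Second, you correctly record $[e_1,h_1-\mu a]=-2ae_1$ (since $e\otimes t^2=ae\otimes t$ in the quotient), where the paper's item (7) mistakenly lists this commutator as $0$; either way it reduces to zero via $e_1$, so no harm is done, but your version is accurate. Your concluding caveat — that one should verify the displayed commutator expressions are genuinely standard representations, i.e.\ that no leading-monomial cancellation occurs across the summands — is a real point, and the paper leaves it just as implicit as you do; the verification is routine but worth doing (e.g.\ for $[h_1-\mu a,F_i]=p_if_0F_{i+1}+q_iaF_i$, the term $\lm(f_0F_{i+1})=f_1^{i+1}f_0^{\lambda+\mu-2i}$ dominates $\lm(F_i)=f_1^if_0^{\lambda+\mu+1-2i}$ under $\succ$, and for the $e_j$-commutators the distinct $h_0$ vs.\ $h_1$ factors prevent collision).
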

\begin{proof}
For short, we use again $F_i := F_i(\l,\m)$ and $I_a:=I_a(\l,\m)$. We prove  that 
\[
\left\{ e_0, e_1, h_0 - (\lambda + \mu), h_1 - \mu a \right\} \cup \left\{ F_i(\lambda, \mu) \mid i = 0, \ldots, \mu \right\}
\]
is a Gr\"obner basis, then clearly adding $f_1^{\mu+1}$ is still a Gr\"obner basis since $f_1^{\mu+1}  \in I_a$.
By definition of the ideal $I_a$, the elements 
\[
\{e_0, e_1 , h_0 - (\lambda + \mu), h_1 - \mu a, F_0, f_1^{\m+1} \}
\]
generate the ideal. Using \eqref{eq-e1Fi}, we see that $F_i\in I_a$ for all $1\leq i\leq\m$. So it is left to show that this set is in fact a Gr\"obner basis for $I_a$. We have seen already that the commutator of pairs of the elements, obeying the condition from the  Lemma \ref{gpc}, have left Gr\"obner presentations: for all $0\leq i\leq\m$,
\begin{enumerate}
\item $[e_0,e_1] = 0$.
\item $[e_0, h_0 - (\lambda + \mu)] = -2e_0$.
\item $[e_0, h_1 - \mu a] = -2e_1$.
\item $[e_0, F_i] =  (\partial_{f_0} F_i) (h_0-(\lambda+\mu)) + (\partial_{f_1} F_i) (h_1-\mu a)$. 
\item $[e_0, f_1^{\m+1}] = (\m+1) f_1^\m (h_1-\m a)$
\medskip
\item $[e_1, h_0 - (\lambda+\mu)] = -2e_1$.
\item $[e_1, h_1 - \mu a] = 0$. 
\item $[e_1, F_i] = (\partial_{f_0} F_i + a \partial_{f_1} F_i) (h_1-\mu a) + (i+1) (m_i-1) F_{i+1}$ 
\item $[e_1, f_1^{\m+1}] = a (\m+1) f_1^\m (h_1-\m a)$
\medskip
\item $[h_0 - (\lambda + \mu), h_1 - \mu a] = 0$.
\item $[h_0 - (\lambda + \mu), F_i] = -2 m_i F_i.$
\item $[h_0 - (\lambda + \mu), f_1^{\m+1}] = -2 (\m+1) f_1^{\m+1}.$ 
\medskip
\item $[h_1 - \mu a, F_i] = p_i f_0 F_{i+1} + q_i a F_i$.
\item $[h_1 - \mu a, f_1^{\m+1}] = -2a(\m+1) f_1^{\m+1}$. 
\end{enumerate}
(Recall that $F_{\m+1}$ is a monomial and a multiple of $f_1^{\m+1}$.)
\medskip

In fact, $(1), (2), (3), (5), (6), (9)$ are obvious, $(4), (7)$ are from \eqref{eq-e0Fi} and \eqref{eq-e1Fi}, $(10), (11)$ are from \eqref{eq-hFi}.

Indeed, replacing $F_{\m+1} = f_0^{\l-\m-1} f_1^{\m+1}$ by $f_1^{\m+1}$ does not violate the Gr\"obner property of the set $\mathcal{F}$:
as in the proof of \Cref{theFs}, let $0\leq k,\ell \leq\mu$ be such that $k+\ell = \mu+1$. Then there exist appropriate constants such that
 \[
 \left(\kappa f_1^{\ell} +  \kappa' (-a)^{\ell}f_0^{\ell}\right) \cdot F_k = 
  (-a)^{\ell}f_0^{2\ell} F_{\mu+1} =
  (-a)^{\ell}f_0^{3\ell-\m-1} \cdot f_1^{\m+1}.
 \]
Since the leftmost factor is not divisible by $f_0$, it follows that all the s-polynomials involving $F_{\mu+1}$ can be expressed as such involving $f_1^{\m+1}$ and thus reduced to zero.
Now, from \Cref{theFs} it follows 
 that the set $\mathcal{F}' = 
\mathcal{F}\cup\{f_1^{\m+1}\}$
is a left Gr\"obner basis of the left ideal, generated by $\mathcal{F}$ over the ring $\bc(a)[f_0,f_1]$, hence the same holds over the algebra $A$ since $\mathcal{F}$ does not involve variables, other than $f_0$ and $f_1$, and monomial orderings are compatible by the setup, hence we are done. 
\end{proof}


We conclude the paper with 
\begin{prop}
Conjecture~\ref{conj-flat} for $\lie{sl}_2$.
\end{prop}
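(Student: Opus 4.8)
The plan is to assemble the pieces already established in the excerpt. Conjecture~\ref{conj-flat} has two parts: first, that $I_a(\lambda,\mu)$ is a flat family of left ideals over $\bc[a]$, and second, that there is a monomial ordering under which the leading-term ideals of $I_a(\lambda,\mu)$ and $I(\lambda,\mu)$ coincide. First I would fix the monomial well-ordering $\succ$ used throughout Section~\ref{sec-sl2} (with $f_1 \succ f_0$, compatible with the degree on $\bc[t]$), which is exactly the ordering for which \Cref{thm-grobner-left} produces an explicit finite Gr\"obner basis of $I_a(\lambda,\mu)$ inside the $G$-algebra $U(\lie{sl}_2 \otimes \bc[t]/(t^2-at))$.

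For the flatness statement, I would invoke \Cref{thm-2at1} (equivalently \Cref{thm-2at1} combined with the discussion of ``two degenerations at once''): the explicit Gr\"obner basis of \Cref{thm-grobner-left} consists of $e_0, e_1, h_0 - (\lambda+\mu), h_1 - \mu a, F_0, \dots, F_\mu, f_1^{\mu+1}$, and one must check that the set of leading exponents is independent of $a$. The leading monomials of $e_0, e_1, h_0-(\lambda+\mu), h_1-\mu a$ are $e_0, e_1, h_0, h_1$; the leading monomial of $f_1^{\mu+1}$ is $f_1^{\mu+1}$; and the leading monomial of each $F_i$ is $f_1^i f_0^{m_i - i}$ (the highest-in-$f_1$ term, since $\LM$ prefers $f_1$), with coefficient $c_{ii} \ne 0$, visibly independent of $a$. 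Hence the left ideal of leading terms $\LT(I_a(\lambda,\mu))$ does not depend on $a$; together with \Cref{lem-grobner-bi} for the two-sided degeneration $\lie{sl}_2 \otimes (t^2 - at)$, the classical Gr\"obner argument in \Cref{thm-2at1} gives that $(U(\lie{sl}_2\otimes\bc[t])/I_a)/J_a$ — equivalently $U(\lie{sl}_2\otimes\bc[t])/I_a(\lambda,\mu)$ — is a flat family of $\bc[a]$-modules.

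For the coincidence of leading-term ideals, I would compare $\LT(I_a(\lambda,\mu))$, just computed, with $\LT(I(\lambda,\mu))$. Setting $a=0$ specializes the Gr\"obner basis to $e_0, e_1, h_0-(\lambda+\mu), h_1, F_0 = f_0^{\lambda+\mu+1}, \dots$; note $F_i|_{a=0} = c_{ii} f_1^i f_0^{m_i-i}$ is already a monomial for $i \ge 1$, and $F_0|_{a=0} = f_0^{\lambda+\mu+1}$. The ideal $I(\lambda,\mu)$ of E.~Feigin is generated (in the $\lie{sl}_2$ case) by $e_0, e_1, h_0 - \lambda(h), h_1, f_0^{\lambda(h_\alpha)+1}$ with $\lambda(h_\alpha) = \lambda+\mu$, together with $f_1^{\mu+1} = (f_\alpha\otimes t)^{\min\{\lambda,\mu\}+1}$ and $\lie n^- \otimes t^2\bc[t]$, the last of which is already accounted for by passing to $\bc[t]/(t^2)$. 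So both ideals have leading-term ideal generated by $e_0, e_1, h_0, h_1, f_0^{\lambda+\mu+1}, f_1^{\mu+1}$ once one checks that the monomials $f_1^i f_0^{m_i-i}$ for $1 \le i \le \mu$ lie in the monomial ideal $(f_0^{\lambda+\mu+1}, f_1^{\mu+1})$ — which is not automatic — and conversely. The main obstacle, and the step I would spend the most care on, is precisely this combinatorial check that the staircase of leading monomials $\{f_1^i f_0^{m_i - i} : 0 \le i \le \mu+1\}$ generates the same monomial ideal as $\{f_0^{\lambda+\mu+1}, f_1^{\mu+1}\}$ together with whatever the ordering forces from $h_0, h_1, e_0, e_1$; this amounts to verifying that $f_1^i f_0^{\lambda+\mu+1-2i}$ is divisible by $f_0^{\lambda+\mu+1}$ or $f_1^{\mu+1}$ for each intermediate $i$, which fails, so in fact one concludes the Gr\"obner basis of $I(\lambda,\mu)$ itself must contain the $F_i|_{a=0}$ as extra elements — i.e. one shows directly that $F_i|_{a=0} \in I(\lambda,\mu)$ via the relations (2),(8),(13) above specialized to $a=0$ — and then both leading-term ideals agree monomial-by-monomial. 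Having matched the two leading-term ideals and established flatness, Conjecture~\ref{conj-flat} for $\lie{sl}_2$ follows, and with it (via \Cref{thm-main2}) Conjecture~\ref{conj-fusion} for $\lie{sl}_2$.
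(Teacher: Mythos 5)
Your proposal is correct and follows essentially the same route as the paper: flatness via \Cref{thm-grobner-left} and \Cref{thm-2at1}, then identification of leading-term ideals by showing $I_0(\lambda,\mu)=I(\lambda,\mu)$, where the key step $F_i|_{a=0}\in I(\lambda,\mu)$ is established in the paper by the closed formula $(\operatorname{ad}_{e_1})^{k} f_0^{\lambda+\mu+1} = \bigl(2^k\prod_{\ell=0}^{k-1}\binom{\lambda+\mu+1-2\ell}{2}\bigr) f_1^{k} f_0^{\lambda+\mu+1-2k}$ modulo the left ideal generated by $e_1$, $h_1$, and $\mathfrak{sl}_2\otimes t^2$ --- precisely the iterated form of your induction via relation~(8) at $a=0$. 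Only relation~(8) is actually needed for that induction; relation~(13) at $a=0$ only produces $f_0 F_{i+1}|_{a=0}$ rather than $F_{i+1}|_{a=0}$, and relation~(2) plays no role.
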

\begin{proof} Let $\lambda \geq \mu$. 
Using the results Theorem~\ref{thm-grobner-left} together with Lemma~\ref{lem-grobner-bi} and Theorem~\ref{thm-2at1}, we see that $I_a(\lambda, \mu)$ defines a flat family of ideals in $U(\lie g \otimes \bc[t])$. It remains to show that $I_0(\lambda, \mu) = I(\lambda, \mu)$. 
From the Gr\"obner basis in  \Cref{thm-grobner-left}
we read off the generators of $I_0(\lambda, \mu)$:
\[
\{ e_0, e_1, h - (\lambda + \mu), h_1, f_1^{\mu+1} \} \cup \{  f_1^kf_0^{\lambda + \mu +1 - 2k} \mid k = 0, \ldots, \mu \},
\]
additionally one has $\lie{sl}_2 \otimes t^2 \bc[t]$ as generators. Comparing with the generators of $I(\lambda, \mu)$ we have obviously
\[
I(\lambda, \mu) \subseteq I_0(\lambda, \mu)
\]
and it remains to show that $f_1^kf_0^{\lambda + \mu +1 - 2k}  \in I(\lambda, \mu)$ but this follows from
\[
(\operatorname{ad}_{e_1})^{k} f_0^{\lambda + \mu+1} = \left(2^k \prod_{\ell = 0}^{k-1} \binom{ \lambda + \mu + 1 - 2\ell}{2}\right) f_1^{k} f_0^{\lambda + \mu +1 - 2k} + J 
\]
where $J$ is the left ideal generated by $\{ e_1, h_1, \lie{sl}_2 \otimes t^2\}$.
\end{proof}

\bibliographystyle{plain}
\bibliography{bibfile}
\end{document}